\theoremstyle{plain}
\newtheorem*{theorem*}{Main Theorem}
\newtheorem{thrm}{Theorem}[section]
\newtheorem{theorem}{Theorem}[section]
\newtheorem{proposition}[theorem]{Proposition}
\newtheorem{lemma}[theorem]{Lemma}
\newtheorem{corollary}[theorem]{Corollary}
\numberwithin{equation}{section}
\newtheorem*{claim}{Claim}
\theoremstyle{definition}
\newtheorem{definition}[theorem]{Definition}
\newtheorem{question}[theorem]{Question}
\newtheorem{problem}[theorem]{Problem}
\theoremstyle{remark}
\DeclareMathOperator{\ThE}{Th_{\exists}}
\DeclareMathOperator{\Aut}{Aut}
\DeclareMathOperator{\TrAut}{TrAut}
\DeclareMathOperator{\dom}{dom}
\DeclareMathOperator{\Fin}{Fin}
\DeclareMathOperator{\ran}{ran}
\DeclareMathOperator{\diam}{diam}
\tikzset{
 symbol/.style={
 draw=none,
 every to/.append style={
 edge node={node [sloped, allow upside down, auto=false]{$#1$}}}
 }
}
\numberwithin{theorem}{section}
\newcommand{\bbN}{{\mathbb N}}
\newcommand{\bbR}{{\mathbb R}}
\newcommand{\bbZ}{{\mathbb Z}}
\newcommand{\cF}{{\mathcal F}}
\newcommand{\cP}{{\mathcal P}}
\newcommand{\cK}{{\mathcal K}}
\newcommand{\cB}{{\mathcal B}}
\newcommand{\cA}{{\mathcal A}}
\newcommand{\cQ}{{\mathcal Q}}
\newcommand{\<}{\langle}
\renewcommand{\>}{\rangle}
\newcommand{\fA}{\mathfrak A}
\newcommand{\fS}{\mathfrak S} 
\newcommand{\fSs}{\mathfrak S^*} 
\newcommand{\fSx}{\mathfrak S^\circ} 
\newcommand{\rmR}{\mathrm {R}}
\newcommand{\rmZ}{\mathrm {Z}}
\newcommand{\rmS}{\mathrm {S}}
\newcommand{\rs}{\restriction}
\newcounter{my_enumerate_counter}
\newcommand{\pushcounter}{\setcounter{my_enumerate_counter}{\value{enumi}}}
\newcommand{\popcounter}{\setcounter{enumi}{\value{my_enumerate_counter}}}
\newcommand{\N}{{\mathbb N}}
\newcommand{\Z}{{\mathbb Z}}
\renewcommand{\P}{{\mathcal P}}
\newcommand{\s}{{\sigma}}
\newcommand{\w}{{\omega}}
\newcommand{\sub}{{\,\subseteq\,}}
\newcommand{\card}[1]{\left\lvert #1 \right\rvert}
\newcommand{\explicitSet}[1]{\left\lbrace #1 \right\rbrace}
\newcommand{\brackets}[1]{\left\langle #1 \right\rangle}
\newcommand{\set}[2]{\explicitSet{#1 \colon #2}}
\newcommand{\seq}[2]{\brackets{#1 \colon #2}}
\newcommand{\ch}{\ensuremath{\mathsf{CH}}\xspace}
\newcommand{\pnmf}{\nicefrac{\mathcal{P}(\N)}{\mathrm{Fin}}}
\newcommand{\pxmf}{\nicefrac{\mathcal{P}(X)}{\mathrm{Fin}}}
\newcommand{\parity}{\mathrm{par}}
\newcommand{\calL}{\mathcal L}
\newcommand{\cU}{\mathcal U}
\DeclareMathOperator{\ZFC}{{\mathsf {ZFC}}}
\DeclareMathOperator{\OCAT}{{\mathsf {OCA}_T}}
\DeclareMathOperator{\MA}{{\mathsf {MA}_{\aleph_1}}}
\DeclareMathOperator{\proj}{proj}
\DeclareMathOperator{\Span}{Span}
\DeclareMathOperator{\supp}{supp}
\newcommand{\cstar}{$\mathrm{C^*}$}
\newcommand{\cPNF}{\pnmf}
\DeclareMathOperator{\id}{id}
\newcommand{\cstu}{\mathrm{C}^*_u}
\newcommand{\roeq}{\mathrm{Q}^*_u}
\DeclareMathOperator{\FIX}{FIX} 
\DeclareMathOperator{\Index}{index}
\newcommand{\bt}{\mathbf t}
\newcommand{\toC}{\overset{{}_{\mathcal C}}{\to}}
\newcommand{\toa}{\overset{{}_{\alpha}}{\longrightarrow}}
\begin{document}

\title{Conjugating trivial automorphisms of $\mathcal P(\mathbb N) / \mathrm{fin}$}
\author{Will Brian}
\address {
W. R. Brian\\
Department of Mathematics and Statistics\\
University of North Carolina at Charlotte\\
9201 University City Blvd.\\
Charlotte, NC 28223, 
(USA)}
\email{wbrian.math@gmail.com}
\urladdr{wrbrian.wordpress.com}
\author{Ilijas Farah}
\address {
I. Farah\\
Department of Mathematics and Statistics\\
York University \\
4700 Keele Street \\
North York, Ontario, Canada, M3J 1P3 \\
and Matemati\v{c}ki Institut SANU \\
Kneza Mihaila 36 \\
11000 Beograd, P. P. 367, Serbia
}
\email{ifarah@yorku.ca}
\urladdr{ifarah.mathstats.yorku.ca}

\subjclass[2020]{03E35, 05C90, 06E25, 08A35, 37B99, 54D40, 51F30, 46L89}
\keywords{\v{C}ech-Stone remainder, uniform Roe coronas, Continuum Hypothesis, saturation, Boolean algebras, automorphisms and embeddings}


\begin{abstract}
A trivial automorphism of the Boolean algebra $\mathcal P(\mathbb N) / \mathrm{Fin}$ is an automorphism induced by the action of some function $\mathbb N \rightarrow \mathbb N$. The forcing axiom $\mathsf{OCA}_{\mathrm{T}}$ implies all automorphisms are trivial, and therefore two trivial automorphisms are conjugate if and only if they have the same (modulo finite) orbit structure. We show that the Continuum Hypothesis implies that two trivial automorphisms are conjugate if and only if there are no first-order obstructions to their conjugacy and their indices have the same parity, if and only if the given trivial automorphisms are conugate in some forcing extension of the universe. 

To each automorphism $\alpha$ of $\pnmf$ we associate the first-order structure $\fA_\alpha=(\pnmf,\alpha)$  and compute the existential theories of these structures. 

These results are applied to resolve a question of Braga, Farah, and Vignati and prove that there are  coarse metric spaces $X$ and $Y$ such that the isomorphism between their uniform Roe coronas is independent of $\ZFC$.
\end{abstract}
 
\maketitle

\section{Introduction}

In this paper we study automorphisms of the Boolean algebra $\pnmf$ assuming the Continuum Hypothesis, \ch. 
Two automorphisms $\alpha$ and $\beta$ of $\pnmf$ are \emph{conjugate} if there is another automorphism $\gamma$ such that $\gamma \circ \alpha = \beta \circ \gamma$, or in other words, such that the following diagram commutes.

\begin{figure}[h]
\begin{tikzpicture}[scale=.8]

\node at (0,0) {$\pnmf$};
\node at (4,0) {$\pnmf$};
\node at (0,3) {$\pnmf$};
\node at (4,3) {$\pnmf$};

\draw[->] (1,0) -- (3,0);
\draw[->] (1,3) -- (3,3);
\draw[->] (0,2.5) -- (0,.5);
\draw[->] (4,2.5) -- (4,.5);

\node at (2,3.2) {\footnotesize $\alpha$};
\node at (2,-.25) {\footnotesize $\beta$};
\node at (-.2,1.5) {\footnotesize $\gamma$};
\node at (4.2,1.5) {\footnotesize $\gamma$};

\end{tikzpicture}
\end{figure}

\noindent In this case $\gamma$ is called a \emph{conjugacy map} or simply a \emph{conjugacy} from $\alpha$ to $\beta$. 

Given an automorphism $\alpha$ of a Boolean algebra $\mathbb A$, the pair $\< \mathbb A,\alpha \>$ is an \emph{algebraic dynamical system}. (Taking Stone duals gives the more commonly studied notion of a topological dynamical system and in the case $\mathbb A = \pnmf$ taking Gelfand--Naimark duals gives a $\bbZ$-action on the unital abelian \cstar-algebra $C(\beta \bbN)$.) Conjugacy is the natural notion of isomorphism in the category of dynamical systems. 
Thus, studying the conjugacy relation on automorphisms of $\pnmf$ amounts simply to studying these maps up to isomorphism, in the appropriate category. 

A \emph{trivial automorphism} of $\pnmf$ is an automorphism induced by the action of some function $\mathbb N \rightarrow \mathbb N$. 
Our goal is to determine when \ch proves two trivial automorphisms are conjugate. 

Let us note from the start that a conjugacy mapping between two trivial automorphisms may be nontrivial. For example, Brian proves in \cite{brian2024does} that assuming \ch, the shift map $\sigma: \pnmf \to \pnmf$ is conjugate to $\sigma^{-1}$. (The shift map, defined by $\s([A]_\mathrm{Fin}) = [A+1]_\mathrm{Fin}$, is the trivial automorphism induced by the successor function $n \mapsto n+1$.) To every trivial automorphism $\alpha$ is associated an integer known as its \emph{prodigal index}, denoted $\Index(\alpha)$ (defined in the next section). 
Two automorphisms with different index can be conjugate: $\sigma$ and $\sigma^{-1}$ provide an example of this, their indices being $1$ and $-1$, respectively. In \cite{van1990automorphism} van Douwen observed that an index obstruction implies that no trivial automorphism can be a conjugacy mapping from $\s$ to $\s^{-1}$. 
Thus \ch proves these maps are conjugate, but in a necessarily nontrivial way.  

%

The main result of Section~\ref{S.Parity} is that, even though automorphisms with different indices can be conjugate, two automorphisms whose indices have different parity cannot be conjugate (this is trivial for automorphisms with finitely many infinite cycles). 
The main result of Section~\ref{S.Saturation} shows that under \ch, this is the only second-order\footnote{To the best of our knowledge; we do not know parity can be expressed as a first-order statement.  See Question~\ref{Q.parity}.} obstruction to conjugacy, resulting in an exact characterization of when \ch proves two trivial automorphisms are conjugate:

\begin{thrm} \label{T.main} For two trivial automorphisms $\alpha$ and $\beta$ of $\pnmf$ the following are equivalent. 
	\begin{enumerate}
		\item \label{1.T.main} \ch{} implies that they are conjugate. 
		\item \label{2.T.main} $\alpha$ and $\beta$ have the same index parity and the structures $\< \pnmf,\alpha \>$ and $\< \pnmf,\beta \>$ are elementarily equivalent. 
		\item \label{3.T.main} $\alpha$ and $\beta$ are conjugate in some forcing extension of the universe. 
	\end{enumerate}
\end{thrm}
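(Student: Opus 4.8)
The plan is to prove the cycle of implications $\eqref{1.T.main} \Rightarrow \eqref{3.T.main} \Rightarrow \eqref{2.T.main} \Rightarrow \eqref{1.T.main}$. The first implication, $\eqref{1.T.main}\Rightarrow\eqref{3.T.main}$, is essentially free: if $\ch$ proves $\alpha$ and $\beta$ are conjugate, then pass to a forcing extension (e.g.\ one that collapses $2^{\aleph_0}$ to $\aleph_1$, or any extension of a countable transitive model satisfying $\ch$) in which $\ch$ holds; there the two automorphisms, which are coded by reals and hence survive the extension together with their triviality witnesses, become conjugate. One must only check that ``trivial automorphism of $\pnmf$'' and the data $\Index(\alpha)$, index parity, and the first-order theory of $\langle\pnmf,\alpha\rangle$ are suitably absolute between models of set theory; parity and index are computed from a finite amount of combinatorial data about the inducing function, so this is routine.

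The implication $\eqref{3.T.main}\Rightarrow\eqref{2.T.main}$ splits into two halves. For elementary equivalence: conjugacy of $\alpha$ and $\beta$ gives an isomorphism $\langle\pnmf,\alpha\rangle\cong\langle\pnmf,\beta\rangle$ in the forcing extension $V[G]$, hence $V[G]\models\langle\pnmf,\alpha\rangle\equiv\langle\pnmf,\beta\rangle$; but elementary equivalence of two structures that already live in $V$ is a statement about their first-order theories, and ``$\mathrm{Th}(\langle\pnmf^{V},\alpha\rangle)=\mathrm{Th}(\langle\pnmf^{V},\beta\rangle)$'' is not changed by forcing, since forcing cannot add a first-order formula and the satisfaction relation for a fixed structure is absolute. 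Here I would be slightly careful: $(\pnmf)^{V[G]}\neq(\pnmf)^{V}$ in general, so the argument must observe that the theory computed in $V[G]$ of the $V$-structure $\langle\pnmf^{V},\alpha\rangle$ agrees with the one computed in $V$ (this is just absoluteness of first-order satisfaction over a fixed structure coded as a set), and that if $\langle\pnmf,\alpha\rangle$ and $\langle\pnmf,\beta\rangle$ are isomorphic in $V[G]$ then in particular they have the same theory in $V[G]$, which by the preceding remark equals their theory in $V$. For index parity: this is where the main result of Section~\ref{S.Parity} enters. That section shows index parity is an invariant of the conjugacy relation (in $\ZFC$, for automorphisms with infinitely many infinite cycles; the finite case being trivial), and since a forcing extension still satisfies $\ZFC$, two automorphisms conjugate in $V[G]$ have equal index parity there, hence equal index parity in $V$ (parity being absolute).

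The implication $\eqref{2.T.main}\Rightarrow\eqref{1.T.main}$ is the substantive one and is precisely the content of the main theorem of Section~\ref{S.Saturation}: working under $\ch$, one builds a conjugacy $\gamma$ from $\alpha$ to $\beta$ by a back-and-forth construction of length $\omega_1$, exploiting countable saturation of $\pnmf$ (or of the associated structures $\fA_\alpha$) to extend partial isomorphisms at each stage. The hypotheses feed in exactly as needed: elementary equivalence of $\langle\pnmf,\alpha\rangle$ and $\langle\pnmf,\beta\rangle$ guarantees that every finite partial isomorphism can be extended one more step (this is the standard consequence of $\omega$-saturation plus elementary equivalence), while equal index parity removes the one global obstruction — analogous to van Douwen's index obstruction for $\sigma$ versus $\sigma^{-1}$ — that could otherwise block the amalgamation of the local data into a genuine conjugacy of the cycle structures. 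I expect the main obstacle, and the reason Section~\ref{S.Saturation} is needed rather than a one-paragraph argument, to be showing that the back-and-forth actually closes up: one must verify that the partial conjugacies can be chosen coherently so that their union is a well-defined automorphism respecting the $\bbZ$-actions, and in particular that the matching of orbit types dictated by elementary equivalence can always be realized simultaneously with the parity constraint. Granting the theorem of Section~\ref{S.Saturation}, though, this implication is immediate.
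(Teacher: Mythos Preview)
Your $(3)\Rightarrow(2)$ argument has a genuine gap. You correctly flag that $(\pnmf)^{V[G]}\neq(\pnmf)^V$ in general, but your proposed fix does not work: the conjugacy in $V[G]$ is between the $V[G]$-structures $\fA_\alpha^{V[G]}$ and $\fA_\beta^{V[G]}$, so what you learn is that \emph{these} share a theory. Absoluteness of satisfaction over the fixed set $\fA_\alpha^V$ is irrelevant, since that is not the structure in play. To descend to $V$ you would need $\mathrm{Th}(\fA_\alpha^V)=\mathrm{Th}(\fA_\alpha^{V[G]})$, i.e.\ that the theory of $\langle\pnmf,\alpha\rangle$ is unchanged when new reals are added --- and this is not automatic for quotient structures. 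The paper (Corollary~\ref{C.Potentially}) instead decomposes $\alpha$ and $\beta$ into rotary, $\Z$-like, and $\N$-like parts (an operation on the inducing almost permutations, hence absolute), shows the rotary theory is absolute via Lemma~\ref{L.ReducedProduct} and Feferman--Vaught (it is a reduced product of fixed finite structures), and reconstructs elementary equivalence of the remaining parts in $V$ from the component count (first-order, hence absolute) together with the result of \cite{brian2024does} that $\fA_\sigma\equiv\fA_{\sigma^{-1}}$.

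The same structural ingredients are what drive $(2)\Rightarrow(1)$, and your sketch misses them. You invoke countable saturation of $\fA_\alpha$, but the paper shows (Proposition~\ref{prop:Saturation}, Theorem~\ref{thm:Saturation}) that $\fA_\alpha$ is countably saturated \emph{only} when $\alpha$ has finitely many $\Z$-like orbits; in the infinite case a direct back-and-forth on $\fA_\alpha$ is unavailable. The actual proof of Theorem~\ref{thm:ExtensionToTriv} again splits into pieces: the rotary parts \emph{are} saturated (Lemma~\ref{L.ReducedProduct}), so \ch{} plus elementary equivalence gives isomorphism there; the $\Z$- and $\N$-like parts are matched not by saturation but by (i) counting components to pin down $\mathrm Z(f)$, (ii) using the parity hypothesis to pin down $\mathrm S(f)$ up to orientation, and (iii) invoking the main theorem of \cite{brian2024does} (that \ch{} implies $\sigma$ and $\sigma^{-1}$ are conjugate) to erase orientation. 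That last black box is indispensable and absent from your proposal.
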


The equivalence of the first two conditions is proven in 
Theorem~\ref{thm:ExtensionToTriv}  
and their equivalence with the third one is 
Corollary~\ref{C.Potentially}.

Theorem~\ref{T.main} does not fully settle the issue, however. Rather, it simply raises the question of when two trivial automorphisms are elementarily equivalent. 
Given two maps $f,g: \N \to \N$ that induce trivial automorphisms of $\pnmf$, can we easily decide whether these trivial automorphism have the same theory? Also, we don’t know whether the index parity is expressible in first-order logic.

We prove some results in this direction in Section~\ref{sec:RotaryTheory}. Most notably, we show that there are trivial automorphisms $\alpha$ and $\beta$ of $\pnmf$ such that the structures $\< \pnmf,\alpha \>$ and $\< \pnmf,\beta \>$ are bi-embeddable, but not elementarily equivalent. 
In other words, the class of trivial automorphisms does not satisfy an analog of  the Cantor--Schr\"{o}der--Bernstein theorem. 

Along the way, we give some other model-theoretic results concerning trivial automorphisms. 
For example, we prove $\< \pnmf,\alpha \>$ is countably saturated\footnote{By this we mean $\aleph_1$-saturated, i.e., (since the language is countable) that every countable consistent type is realized.} if and only if the function $\N \to \N$ inducing $\alpha$ does not have infinitely many infinite   
orbits. 

Even our limited results solve a problem from \cite{braga2018uniformRoe}, belonging to an apparently unrelated topic, a rigidity problem between coarse geometry and operator algebras. On the category of metric spaces one considers the relation of coarse equivalence, where two spaces are identified if their large-scale properties are the same (see~\S\ref{S.uRc} for the definitions). To a metric space one associates a \cstar-algebra, called the uniform Roe algebra, and its quotient, the uniform Roe corona. 
The rigidity question asks whether an isomorphism of algebras implies a coarse equivalence of the original spaces. For uniform Roe algebras the answer is positive: isomorphism (or even Morita-equivalence) of algebras implies coarse equivalence of spaces  (\cite{baudier2022uniform}, see also \cite{martinez2024rigidity} and \cite{martinez2025Cstar} for a generalization to Roe algebras). The isomorphism of uniform Roe coronas implies coarse equivalence of spaces if one assumes forcing axioms (\cite{braga2018uniformRoe}, improved in \cite[Theorem~1.5]{baudier2022uniform}). 
We prove that there are uniformly locally finite metric spaces $X$ and $Y$ such that the isomorphism of the associated uniform Roe algebras is independent from $\mathsf{ZFC}$. More precisely:

\begin{thrm}\label{T.uRq}
There are $2^{\aleph_0}$ uniformly locally finite metric spaces such that 
\begin{enumerate}
	\item They are not coarsely equivalent. 
	\item \ch{} implies that their uniform Roe coronas are isomorphic.
	\item  Forcing axioms\footnote{To be specific, $\OCAT$ and $\MA$.} imply that their uniform Roe coronas are not isomorphic.  
\end{enumerate}  
\end{thrm}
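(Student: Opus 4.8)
The plan is to realize the spaces as orbit graphs of trivial automorphisms and then apply Theorem~\ref{T.main} together with the correspondence, implicit in \cite{braga2018uniformRoe} and made explicit in \S\ref{S.uRc}, between uniform Roe coronas and the dynamical systems $\langle\pnmf,\alpha\rangle$. To a permutation $f$ of $\bbN$ associate the \emph{orbit graph} $X_f$ on vertex set $\bbN$ with an edge between $n$ and $f(n)$; this is uniformly locally finite of degree at most $2$, and a computation identifies $C^*_u(X_f)$ with the crossed product $\ell^\infty(\bbN)\rtimes_f\bbZ$ and hence $\roeq(X_f)$ with $C(\bbN^*)\rtimes_{\widehat{\alpha_f}}\bbZ$, where $\alpha_f$ is the trivial automorphism of $\pnmf$ induced by $f$ and $\widehat{\alpha_f}$ its Stone dual. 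Two consequences will be used. First, if $\alpha_f$ and $\alpha_g$ are conjugate then $\roeq(X_f)\cong\roeq(X_g)$, since a conjugacy is precisely an isomorphism of the two $\bbZ$-actions and conjugate actions have isomorphic crossed products. Second, if $f$ and $g$ are products of finite cycles, so that $X_f$ and $X_g$ are disjoint unions of cycles with distinct components at distance $\infty$, then a coarse equivalence $X_f\to X_g$ can neither split nor merge components, and a fibre-counting argument using the uniformity of its control functions forces it to match the cycles of $f$ to those of $g$ by a bijection of cofinite subsets of their length-multisets under which corresponding lengths have a uniformly bounded ratio.

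For the family, fix for each $r\in(1,2)$ the set $A_r=\{\lfloor r^n\rfloor: n\geq n_0(r)\}\subseteq\{2,3,4,\dots\}$. Distinct growth rates guarantee that for $r\neq s$ no bijection between cofinite subsets of $A_r$ and $A_s$ has bounded ratio (if $r<s$, such a bijection would have to pack the first $N$ elements of $A_r$, all of size $\Theta(r^N)$, into the $\Theta(N\log r/\log s)\ll N$ elements of $A_s$ below $\Theta(r^N)$). Let $f_r$ be a permutation of $\bbN$ that is a disjoint union of exactly one cycle of length $a$ for each $a\in A_r$, put $\alpha_r=\alpha_{f_r}$ and $X_r=X_{f_r}$. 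By the second consequence above the spaces $\{X_r: r\in(1,2)\}$ are pairwise not coarsely equivalent, which is item~(1), and there are $2^{\aleph_0}$ of them. (If a genuine metric is wanted, replace $X_r$ by the coarse disjoint union of the same cycles with sufficiently rapidly growing spacing between components; this changes neither the coarse type nor the corona.)

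Each $f_r$ is a genuine bijection of $\bbN$, so it has no infinite orbits and $\Index(\alpha_r)=0$; in particular the $\alpha_r$ all have the same (even) index parity, so that hypothesis of Theorem~\ref{T.main} is automatic. Using the description of the theories of rotary automorphisms from \S\ref{sec:RotaryTheory} — for $\alpha$ a product of infinitely many finite cycles covering $\bbN$ in which no length occurs more than once, $\Th\langle\pnmf,\alpha\rangle$ does not depend on which lengths occur — one checks, via a back-and-forth argument at the level of $n$-types, that $\langle\pnmf,\alpha_r\rangle$ and $\langle\pnmf,\alpha_s\rangle$ are elementarily equivalent for all $r,s$. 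The implication $(\ref{2.T.main})\Rightarrow(\ref{1.T.main})$ of Theorem~\ref{T.main} then gives that $\ch$ implies $\alpha_r$ and $\alpha_s$ are conjugate, whence $\roeq(X_r)\cong\roeq(X_s)$ by the first consequence above; this is item~(2). For item~(3), the rigidity theorem of \cite{braga2018uniformRoe}, sharpened in \cite[Theorem~1.5]{baudier2022uniform}, shows that under $\OCAT+\MA$ any isomorphism $\roeq(X_r)\cong\roeq(X_s)$ induces a coarse equivalence $X_r\to X_s$; by item~(1) there is none, so the coronas are not isomorphic.

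The main obstacle is the elementary-equivalence step: one must isolate exactly which first-order features of a product of finite cycles survive in $\pnmf$ — a single finite cycle is $=^*\emptyset$ and hence invisible, while, e.g., ``there are infinitely many cycles of a fixed length $k$'' is visible through the invariant set they determine — and verify that the $\alpha_r$ all share these features, which is precisely the content supplied by \S\ref{sec:RotaryTheory}. A secondary technical point is pinning down the ideal of negligible operators (the block-diagonal compacts, which coincide with $c_0(\bbN)\rtimes_f\bbZ$), so that both the passage from conjugacy to isomorphism of coronas and the cited rigidity theorem apply verbatim to the disconnected spaces $X_r$.
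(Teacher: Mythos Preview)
There is a genuine gap in the elementary-equivalence step. You assert that \S\ref{sec:RotaryTheory} supplies the fact that ``for $\alpha$ a product of infinitely many finite cycles covering $\bbN$ in which no length occurs more than once, $\Th\langle\pnmf,\alpha\rangle$ does not depend on which lengths occur.'' This is false, and \S\ref{sec:RotaryTheory} actually proves the opposite: Theorem~\ref{thm:RotaryObstruction} exhibits, for each $j<m$, first-order sentences detecting whether $n_k\equiv j\pmod m$ holds for cofinitely many (or for infinitely many) $k$, and Corollary~\ref{cor:ExistentialTheory} uses exactly this to produce two strictly increasing sequences (namely $m_i=2^{2i}$ and $n_j=2^{2j+1}$, each length occurring once) whose rotary structures are bi-embeddable but \emph{not} elementarily equivalent. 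Your specific sequences $A_r=\{\lfloor r^n\rfloor\}$ may well avoid these particular arithmetic obstructions (via equidistribution of $r^n$ modulo $1$, say), but whether the arithmetic obstructions of Theorem~\ref{thm:RotaryObstruction} are the \emph{only} first-order obstructions is left explicitly open in the paper (see Question~\ref{Q.examples} and the surrounding discussion). No back-and-forth argument for rotary structures is supplied anywhere in \S\ref{sec:RotaryTheory}, and none is known.

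The paper sidesteps this difficulty entirely. Rather than choosing explicit sequences and attempting to verify elementary equivalence directly, it uses Ghasemi's Trick (Proposition~\ref{P.FVtrick}): by the Feferman--Vaught theorem one can pass to a subsequence $\bar m$ of any starting sequence so that \emph{every} further subsequence gives an elementarily equivalent reduced product. An almost disjoint family of size $2^{\aleph_0}$ on the index set then yields $2^{\aleph_0}$ subsequences of $\bar m$ whose rotary structures are automatically elementarily equivalent (Corollary~\ref{C.many.nonconjugate}); countable saturation (Lemma~\ref{L.ReducedProduct}) and \ch{} then give conjugacy. The coarse inequivalence and the forcing-axiom direction follow much as you outline, via Lemma~\ref{L.coarse.rotary} and the rigidity theorem of \cite{baudier2022uniform}. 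The moral is that elementary equivalence is obtained by a compactness/diagonalization argument, not by any direct analysis of the theories involved.
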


 This is proven in Theorem~\ref{T.uRQ} and in Theorem~\ref{T.uRQ+} we prove that these spaces can be chosen to be of asymptotic dimension 1, and in particular with Property A.

Our results show that \ch{} provides a context in which dynamical systems associated with trivial automorphisms of $\pnmf$ exhibit the least possible degree of rigidity. 
In particular, \ch is the strongest tool one could hope for in trying to build conjugacy mappings between various trivial automorphisms. 
On the other hand, it is consistent that all automorphisms of $\pnmf$ are trivial. 
This was first proved by Shelah \cite{ShelahAutomorphisms}, and later work in \cite{ShelahSteprans}, \cite{Ve:OCA}, and \cite{de2023trivial} revealed that ``every automorphism of $\pnmf$ is trivial'' follows from the axiom $\OCAT$. 

This phenomenon, that \ch{} implies maximal possible malleability of quotient structures while the forcing axioms provide maximal rigidity is well-known and a survey of the current state of the art can be found in \cite{farah2022corona}. For the effect of \ch{} on automorphisms of $\pnmf$ 
see also \cite[Section 1.6]{vanMill}. 

Readers interested in dynamics but not in uniform Roe algebras (or coronas) can simply stop before they reach \S\ref{S.uRc}. Readers interested in uniform Roe coronas but not in dynamics will have a harder time, because they will have to read Lemma~\ref{L.ReducedProduct}, and some of what leads up to it, in order to make sense of the lemma and its proof, which are relevant to understanding the material in \S\ref{S.uRc}. 

\subsection*{Acknowledgments} 
We would like to thank Saeed Ghasemi for his interest in this project, as well as an unknown referee whose 
comments helped us to clarify several or our proofs.

\section{Almost permutations}\label{sec:definitions}

The power set of $\N$, $\P(\N)$, when ordered by inclusion, is a complete Boolean algebra. 
The Boolean algebra $\pnmf$ 
is the quotient of this Boolean algebra by the ideal of finite sets. 
For $A,B \sub \N$, we write $A =^* B$ to mean that $A$ and $B$ differ by finitely many elements, i.e. $\card{(A \setminus B) \cup (B \setminus A)} < \aleph_0$.
The members of $\pnmf$ are equivalence classes of the $=^*$ relation, and we denote the equivalence class of $A \sub \N$ by $[A]_\text{Fin}$. 
The Boolean-algebraic relation $[A]_\text{Fin} \leq [B]_\text{Fin}$ is denoted on the level of representatives by writing $A \sub^* B$: that is, $A \sub^* B$ means that $[A]_\text{Fin} \leq [B]_\text{Fin}$, or (equivalently) that $B$ contains all but finitely many members of $A$. 

The Stone space of $\P(\N)$ is $\beta\N$, the \v{C}ech-Stone compactification of the countable discrete space $\N$. The Stone space of $\pnmf$ is $\N^* = \beta\N \setminus \N$. 

While our main interest is in almost permutations of $\bbN$, it is helpful to begin with a more general definition. 

\begin{definition}\label{Def.Almost}
An \emph{almost bijection  between sets $X$ and $Y$} is a bijection between a cofinite subset of $X$ and a cofinite subset of $Y$.  
An \emph{almost permutation of a set~$X$} is a bijection between cofinite subsets of $X$. 
By $\fSx(X,Y)$ we denote the set of almost bijections between $X$ and $Y$ and by $\fSx(X)$ we denote the set of almost permutations of $X$. 
Every almost permutation $f$ of $X$ induces an automorphism $\alpha_f$ of $\pxmf$: 
 \[
 \alpha_f ([A]_{\Fin})= [f[A]]_{\Fin}. 
 \] 
An automorphism of $\pxmf$ that is induced in this way by an almost permutation of $X$ is called \emph{trivial}. 

If $f$ and $g$ are almost permutations of $X$ and $f(x) = g(x)$ for all but finitely many $x\in X$, then we write $f =^* g$. 
\hfill\Coffeecup
\end{definition}

If $f$ and $g$ are almost permutations of $\N$, then $g \circ f$ is defined in the natural way, by mapping $n$ to $g(f(n))$ whenever this is well-defined. It is not difficult to see that $g \circ f$ is also an almost permutation, so the class of almost permutations of $\N$ is closed under composition (in the sense just described). Since this composition operator is associative, the structure $(\fSx(X),\circ)$ is  a semigroup with the identity map as a unit. 
Furthermore, for any almost permutations $f$ and $g$ of $X$,
$$\alpha_{f^{-1}} \,=\, \alpha_f^{-1} \qquad \text{ and } \qquad \alpha_{g \circ f} \,=\, \alpha_g \circ \alpha_f.$$
Thus the trivial automorphisms of $\pxmf$ form a group with respect to composition, with identity $\mathrm{id}_{\pxmf} = \alpha_{\mathrm{id}_X}$. This group is denoted $\TrAut(\pxmf)$. 

Because $f\circ f^{-1}=^*\id_X=^*f^{-1}\circ f$, 
the quotient 
\[
\fSs(X) \,=\, \fSx(X)/\!=^*
\] 
is a group as well. 
The $=^*$-equivalence classes of almost bijections (i.e., the elements of $\fSs(X)$) are called  near bijections in \cite[\S 1C]{cornulier2019near}, where the reader can find plenty of information on the structure of  the quotient group $\fSs(X)$. 
This reference is a fun and illuminating read, not  the least because it points out a fair number of incorrect albeit published results. 


\begin{lemma}\label{lem:AlmostEqualAPs}
Suppose $f$ and $g$ are almost permutations of $X$. Then $f =^* g$ if and only if $\alpha_f \,=\, \alpha_g$. 
Consequently, 
\[
[f] \mapsto \alpha_f
\]
is a group isomorphism from $\fSs(X)$ onto $\TrAut(\pxmf)$.
\end{lemma}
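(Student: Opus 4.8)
The plan is to prove the biconditional directly, and then deduce the isomorphism statement from it together with facts already established in the text. The ``only if'' direction is essentially immediate: if $f =^* g$, then for every $A \subseteq X$ the symmetric difference of $f[A]$ and $g[A]$ is contained in the image under $f$ and $g$ of the finite set on which $f$ and $g$ disagree (plus the finitely many points where one of $f,g$ is undefined but the other is not), hence is finite; so $f[A] =^* g[A]$, which says exactly $\alpha_f([A]_{\Fin}) = \alpha_g([A]_{\Fin})$, i.e.\ $\alpha_f = \alpha_g$.

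For the ``if'' direction I would argue by contraposition. Suppose $f \neq^* g$, so the set $D = \{x \in \dom(f)\cap\dom(g) : f(x)\neq g(x)\}$ together with the (finite) symmetric difference of the domains is infinite; since the domain discrepancy is finite, $D$ itself must be infinite. The goal is to produce a single set $A \subseteq X$ with $f[A] \neq^* g[A]$. The naive choice $A = D$ need not work, because $f[D]$ and $g[D]$ could still be almost equal (values can be permuted among themselves). The fix is a standard pigeonhole/greedy construction: build an infinite subset $A \subseteq D$ such that $f[A] \cap g[A] = \emptyset$, or at least such that $f[A]\setminus g[A]$ is infinite. Concretely, enumerate $D = \{x_0, x_1, \dots\}$ and recursively choose an infinite subsequence $x_{n_0}, x_{n_1}, \dots$ maintaining the invariant that $\{f(x_{n_0}),\dots,f(x_{n_k})\}$ and $\{g(x_{n_0}),\dots,g(x_{n_k})\}$ are disjoint: at stage $k+1$, since $f$ and $g$ are finite-to-one (being injective), only finitely many $x \in D$ have $f(x)$ or $g(x)$ lying in the finite set of values already used, and moreover for all but finitely many remaining $x$ we have $f(x) \neq g(x_{n_j})$ and $g(x) \neq f(x_{n_j})$ for $j \le k$; discard those finitely many and pick the next available $x_{n_{k+1}}$. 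For $A = \{x_{n_k} : k \in \bbN\}$ we then have $f[A] \cap g[A] = \emptyset$ with both images infinite, so $f[A] \neq^* g[A]$ and hence $\alpha_f \neq \alpha_g$.

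The main obstacle is precisely this construction: one has to be careful that injectivity of $f$ and $g$ (which is part of the definition of an almost permutation) is what makes the ``finitely many bad $x$ at each stage'' claim go through, and that the disjointness invariant is genuinely maintainable. Everything else is bookkeeping. Finally, for the ``Consequently'' clause: the text has already observed that $\alpha_{g\circ f} = \alpha_g\circ\alpha_f$ and $\alpha_{f^{-1}} = \alpha_f^{-1}$, so $f \mapsto \alpha_f$ is a group homomorphism from $\fSx(X)$ onto $\TrAut(\pxmf)$ (surjectivity is the definition of ``trivial automorphism''), and by the biconditional just proved its kernel as an equivalence relation is exactly $=^*$. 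Passing to the quotient $\fSs(X) = \fSx(X)/\!=^*$ therefore yields a well-defined, injective, surjective group homomorphism $[f]\mapsto\alpha_f$ onto $\TrAut(\pxmf)$, i.e.\ a group isomorphism, as claimed. $\qed$
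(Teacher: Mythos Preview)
Your proof is correct and follows the same overall strategy as the paper: the forward direction is immediate, and for the converse you argue by contraposition, producing an infinite $A$ inside the disagreement set with $f[A]\cap g[A]=\emptyset$, which forces $\alpha_f\neq\alpha_g$. The ``Consequently'' clause is handled exactly as in the paper.

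The one point of genuine difference is the mechanism used to extract that infinite set. You run a direct greedy recursion, using injectivity of $f$ and $g$ to see that only finitely many candidates are excluded at each stage. The paper instead defines a $2$-coloring of pairs from the disagreement set (color $0$ when an $f$-value of one element collides with a $g$-value of the other) and invokes Ramsey's theorem: injectivity rules out an infinite $0$-homogeneous set, so an infinite $1$-homogeneous $B$ exists, and this $B$ has $f[B]\cap g[B]=\emptyset$. Your argument is more elementary and self-contained; the paper's is shorter once Ramsey is granted. Both are standard ways to handle this kind of ``avoid finitely many bad values'' situation, and neither has a real advantage here beyond taste.
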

\begin{proof}
If $f =^* g$, then $f[A] =^* g[A]$ for every $A \sub X$, which means 
\[
\alpha_f ([A]_{\Fin}) = [f[A]]_{\Fin} = [g[A]]_{\Fin} = \alpha_g([A]_{\Fin}).
\]
On the other hand, suppose it is not the case that $f =^* g$, i.e., suppose the set $A=\set{ n \in \dom(f)\cap \dom(g) }{ f(n)\neq g(n) }$ is infinite. Consider the partition $[A]^2$ defined by $c(\{m,n\})=0$ if $f(m)=g(n)$ and $c(\{m,n\})=1$ if $f(m)\neq g(n)$. 
Since both $f$ and $g$ are one-to-one, no 0-homogeneous subset  of $A$ has cardinality greater than $2$. By Ramsey's theorem, there is an infintie 1-homogeneous $B\subseteq X$. But then $f[B]$ and $g[B]$ are disjoint infinite sets, and this implies 
$$\alpha_f ([B]_{\Fin}) = [f[B]]_{\Fin} \neq [g[B]]_{\Fin} = \alpha_g([B]_{\Fin}).$$ 
For the ``consequently'' part: the previous paragraph shows the map $[f] \mapsto \alpha_f$ is bijective, and we have already observed that $\alpha_{f \circ g} = \alpha_f \circ \alpha_g$ and $\alpha_{f^{-1}} = \alpha_f^{-1}$. 
\end{proof}

\begin{definition}\label{def:TriviallyConjugate}
Two automorphisms $\alpha$ and $\beta$ of $\pxmf$ are \emph{conjugate} if there is an automorphism $\gamma$ of $\pxmf$ such that $\gamma \circ \alpha = \beta \circ \gamma$. The map $\gamma$ is called a \emph{conjugacy map} or simply a \emph{conjugacy} from $\alpha$ to $\beta$. 
If some $\gamma \in \TrAut(\pxmf)$ is a conjugacy map from $\alpha$ to $\beta$, then $\alpha$ and $\beta$ are \emph{trivially conjugate}. 
\hfill\Coffeecup
\end{definition}

Note that in the case $\alpha,\beta \in \TrAut(\pxmf)$, the definition of ``trivially conjugate'' reduces to the usual notion of conjugacy in the group $\TrAut(\pxmf)$. 

\begin{definition}\label{def:EquivalentAPs}
Two almost permutations $f$ and $g$  of sets $X$ and $Y$ are called \emph{equivalent}, denoted $f\sim g$,  if there is an almost bijection $h$ between $X$ and $Y$  such that $h \circ f =^* g \circ h$. 
\hfill\Coffeecup
\end{definition}

In the case $Y=X$, Definition~\ref{def:EquivalentAPs} is essentially the same as Definition~\ref{def:TriviallyConjugate}, only at the level of functions rather than equivalence classes. More precisely:

\begin{lemma}\label{lem:TriviallyConjugate}
Suppose $f$ and $g$ are almost permutations of a set $X$. 
Then $f \sim g$ if and only if $\alpha_f$ and $\alpha_g$ are trivially conjugate.  
\end{lemma}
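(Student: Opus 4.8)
The plan is simply to unwind the two definitions and transport everything across the group isomorphism $[h] \mapsto \alpha_h$ of Lemma~\ref{lem:AlmostEqualAPs}. The one preliminary observation I would record is that, since here the target set equals $X$, an almost bijection between $X$ and $X$ is literally the same thing as an almost permutation of $X$ (Definition~\ref{Def.Almost}); so the witness $h$ appearing in the statement $f \sim g$ automatically lies in $\fSx(X)$, and $\alpha_h$ is a well-defined element of $\TrAut(\pxmf)$.

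For the forward direction, I would assume $f \sim g$ and fix an almost permutation $h$ of $X$ with $h \circ f =^* g \circ h$, then set $\gamma = \alpha_h \in \TrAut(\pxmf)$. Using the homomorphism property $\alpha_{g \circ f} = \alpha_g \circ \alpha_f$ together with the fact that $=^*$-equal almost permutations induce equal automorphisms (Lemma~\ref{lem:AlmostEqualAPs}), one computes
\[
\gamma \circ \alpha_f = \alpha_h \circ \alpha_f = \alpha_{h \circ f} = \alpha_{g \circ h} = \alpha_g \circ \alpha_h = \alpha_g \circ \gamma,
\]
so $\gamma$ is a trivial conjugacy from $\alpha_f$ to $\alpha_g$.

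For the converse, I would assume $\gamma \in \TrAut(\pxmf)$ satisfies $\gamma \circ \alpha_f = \alpha_g \circ \gamma$. Triviality of $\gamma$ and Lemma~\ref{lem:AlmostEqualAPs} produce an almost permutation $h$ of $X$ with $\gamma = \alpha_h$, and then
\[
\alpha_{h \circ f} = \alpha_h \circ \alpha_f = \gamma \circ \alpha_f = \alpha_g \circ \gamma = \alpha_g \circ \alpha_h = \alpha_{g \circ h}.
\]
Applying the ``$\alpha_f = \alpha_g \Rightarrow f =^* g$'' half of Lemma~\ref{lem:AlmostEqualAPs} to the outer terms gives $h \circ f =^* g \circ h$, so $h$ witnesses $f \sim g$.

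I do not expect a genuine obstacle: this is a bookkeeping statement translating Definition~\ref{def:EquivalentAPs} into Definition~\ref{def:TriviallyConjugate} using only that $h \mapsto \alpha_h$ respects composition and inverses and has the kernel computed in Lemma~\ref{lem:AlmostEqualAPs}. The sole point needing a moment's attention is the identification noted in the first paragraph, that ``almost bijection between $X$ and $X$'' and ``almost permutation of $X$'' coincide, which guarantees the conjugating function can be taken in $\fSx(X)$ precisely when it exists in the sense of Definition~\ref{def:EquivalentAPs}.
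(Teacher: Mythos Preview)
Your proof is correct and is essentially the same as the paper's, which also rests entirely on Lemma~\ref{lem:AlmostEqualAPs}: the paper simply phrases the argument more abstractly, observing that since $[h] \mapsto \alpha_h$ is a group isomorphism $\fSs(X) \to \TrAut(\pxmf)$, conjugacy of $[f]$ and $[g]$ in $\fSs(X)$ is equivalent to conjugacy of $\alpha_f$ and $\alpha_g$ in $\TrAut(\pxmf)$. Your version unwinds this isomorphism-transports-conjugacy statement explicitly, which is a perfectly good (and arguably clearer) way to present the same idea.
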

\begin{proof}
Recall that, by Lemma~\ref{lem:AlmostEqualAPs}, the map $[f] \mapsto \alpha_f$ is an isomorphism from $\fSs(X)$ onto $\TrAut(\pxmf)$. 
Thus two almost permutations $f$ and $g$ of $X$ are equivalent if and only if $[f]$ and $[g]$ are conjugate in $\fSs(X)$, if and only if $\alpha_f$ and $\alpha_g$ are conjugate in $\TrAut(\pxmf)$, if and only if $\alpha_f$ and $\alpha_g$ are trivially conjugate.
\end{proof}

\begin{definition}\label{def:FunctionSums}
If $X$ and $Y$ are disjoint sets, $f\in \fSx(X)$, and $g\in \fSx(Y)$, then $f \cup g \in \fSx(X\sqcup Y)$. 
Let $X$ be an infinite set, and let $f,g \in \fSx(X)$. Then $f$ is naturally equivalent to some $f' \in \fSx(X \times \{0\})$, and $g$ is naturally equivalent to some $g' \in \fSx(X \times \{1\})$, and $f' \cup g' \in \fSx(X \times \{0,1\})$. 
But there is a bijection between $X$ and $X \times \{0,1\}$,\footnote{We are assuming the Axiom of Choice, AC,  throughout, but in case of a well-ordered set such as $\bbN$---our main interest---AC is unnecessary.} 
and via this bijection there is a function in $\fSx(X)$ equivalent to $f' \cup g'$. 
All such functions are equivalent to each other (all being equivalent to $f' \cup g'$), so they form a single equivalence class in $\fSs(X)/\sim$. We denote this class by $f \oplus g$, the \emph{direct sum} of $f$ and $g$. 
Abusing notation, we may also sometimes refer to a member of this equivalence class as $f \oplus g$. 
\hfill\Coffeecup
\end{definition}

The $\oplus$ operation turns $\fSs(X)/\sim$ into an abelian semigroup (without a unit).

\begin{definition}\label{Def.sZZ}
Let $s$ denote the successor function on $\N$, that is, $s(n) = n+1$ for all $n \in \N$. The induced automorphism $\alpha_s$ is denoted $\s$, and is known as the (right) \emph{shift map} on $\pnmf$. 
More generally,  for $k\geq 1$ let $s_{k\N}$ denote the direct sum of $k$ copies of $s$. 

Let $s_\Z$ denote a permutation $\N \to \N$ with a single bi-infinite orbit; that is, $s_{\Z}$ is equivalent to the map $n \mapsto n+1$ on $\Z$.  
More generally, let $s_{k\Z}$ denote the sum of $k$ copies of $s_\Z$. 
Let $s_{\Z \times \Z}$ denote a permutation $\N \to \N$ with infinitely many bi-infinite orbits, i.e., $s_{\Z \times \Z}$ is equivalent to the map $(i,j) \mapsto (i,j+1)$ on $\Z \times \Z$. 
Denote $\alpha_{s_{\Z \times \Z}} = \alpha_{\Z \times \Z}$. 
\hfill\Coffeecup
\end{definition}

Observe that $s_{k\N}$ is equivalent to $s^k$ for every $k > 0$. Our notation, $s_{k\N}$ rather than $s^k$, is meant to encourage thinking of $s_{k\N}$ as consisting of $k$ $\N$-like orbits. 

\begin{definition}\label{def:RotaryMap}
Let $\bar n = \seq{n_j}{j \in \w}$ be a sequence in $\N$. 
Define $I_0 = [0,n_0)$, $I_1 = [n_0,n_0+n_1)$, and generally, $I_k = \big[ \sum_{i = 0}^{k-1}n_i , \sum_{i = 0}^{k}n_i \big)$ for $k > 0$, so that each $I_k$ is the interval of length $n_k$ immediately to the right of $I_{k-1}$. Let 
$$r_{\bar n}(i) \,=\, \begin{cases}
i+1 &\text{ if } i \neq \max I_k \text{ for any $k$,} \\
\min I_k &\text{ if } i = \max I_k.
\end{cases}$$
In other words, $r_{\bar n}$ acts on each $I_k$ by cyclically permuting the members of $I_k$. 
Any permutation of $\N$ of this form is called a \emph{rotary permutation} on $\N$. 

For convenience, we denote $\alpha_{r_{\bar n}}$ by $\alpha_{\bar n}$. Any such trivial automorphism of $\pnmf$ is called a \emph{rotary automorphism}. 
\hfill\Coffeecup
\end{definition}

In the following definition we specialize to $\fSx(\bbN)$, although it is not difficult to see that the definition transfers almost verbatim to $\fSx(X)$ for any infinite set $X$. 

\begin{definition}\label{def:OrbitStructure}
Let $f$ be an almost permutation of $\N$. Given $n \in \mathrm{dom}(f)$, let $O_f(n) = \set{f^k(n)}{k \in \Z \text{ and $f^k(n)$ is well-defined}}$, the orbit of $n$ under $f$. 
We say that $O_f(n)$ is \emph{$\N$-like} if there is some $k < 0$ such that $f^k(n) \notin \mathrm{image}(f)$; we say $O_f(n)$ is \emph{reverse $\N$-like} if there is some $k > 0$ such that $f^k(n) \notin \mathrm{dom}(f)$; we say $O_f(n)$ is \emph{$\Z$-like} if it is infinite and $f^k(n)$ is well-defined for all $k \in \Z$. 

If $f$ has infinitely many finite cyclic orbits, fix an enumeration $\bar n = \seq{n_j}{j \in \w}$ of the sizes of these orbits. The \emph{rotary part} of $f$ is 
$\mathrm{R}(f) \,=\, r_{\bar n}$. 
If $f$ has only finitely many finite orbits, we say the rotary part of $f$ is empty, and $\mathrm{R}(f)$ is defined to be the empty map. 

Let $N_+$ denote the number of $\N$-like orbits of $f$, and let $N_-$ the number of reverse $\N$-like orbits. Both these numbers are finite, because $f$ is a bijection between cofinite subsets of $\N$. 
Let $k = N_+ - N_-$. 
If $k > 0$, the \emph{$\N$-like part} of $f$ is 
$\mathrm{S}(f) \,=\, s_{k\N}$. 
If $k < 0$, the \emph{$\N$-like part} of $f$ is 
$\mathrm{S}(f) \,=\, s_{k\N}^{-1}$. 
If $k = 0$, we say the \emph{$\N$-like part} of $f$ is empty, and $\mathrm{S}(f)$ is defined to be the empty map. 

If $f$ has infinitely many $\Z$-like orbits, the \emph{$\Z$-like part} of $f$ is 
$\mathrm{Z}(f) \,=\, s_{\Z \times \Z}$. 
If $f$ has some finite number $\ell$ of $\Z$-like orbits, let $m = \min\{ N_+,N_- \}$ and let $k = \ell+m$. 
If $k > 0$, the \emph{$\Z$-like part} of $f$ is 
$\mathrm{Z}(f) \,=\, s_{k\Z}$. 
If $k = 0$, we say the $\Z$-like part of $f$ is empty, and $\mathrm{Z}(f)$ is defined to be the empty map. 
\hfill\Coffeecup
\end{definition}

Note that $\mathrm R(f)$ is only defined up to equivalence, as it depends on the enumeration $\bar n$. 
In what follows (in the next lemma and in the remainder of the paper), every statement about $\mathrm R(f)$ is invariant under equivalence.

\begin{lemma}\label{lem:OrbitStructure}
Every almost permutation $f$ of $\bbN$ is equivalent to $\mathrm{R}(f) \oplus \mathrm{Z}(f) \oplus \mathrm{S}(f)$. 
Moreover, given two almost permutations $f$ and $g$ of $\N$, $f \sim g$ if and only if $\mathrm R(f) \sim \mathrm R(g)$, $\mathrm S(f) = \mathrm S(g)$, and $\mathrm Z(f) = \mathrm Z(g)$. 
\end{lemma}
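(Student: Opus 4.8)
The plan is to prove the two assertions separately: first the decomposition $f \sim \mathrm R(f)\oplus \mathrm Z(f)\oplus \mathrm S(f)$, then the classification of $\sim$ in terms of the three parts. Throughout, the key tool is that since $f$ is an almost permutation (a bijection between cofinite sets), its domain and range each miss only finitely many points of $\bbN$; consequently only finitely many orbits fail to be either finite cycles or $\bbZ$-like, namely the $\bbN$-like ones (those whose backward orbit exits $\mathrm{image}(f)$) and the reverse-$\bbN$-like ones (those whose forward orbit exits $\mathrm{dom}(f)$). I would first record the elementary \emph{orbit partition}: up to discarding the finite set $\bbN\setminus(\mathrm{dom}(f)\cap\mathrm{image}(f))$, the set $\bbN$ is partitioned into the orbits of $f$, and $f$ restricted to each orbit is (equivalent to) the successor map on a finite cycle, on $\bbN$, on $\bbZ$ reversed, or on $\bbZ$.

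For the decomposition, I would group the orbits into three families: $(i)$ the finite cyclic orbits, $(ii)$ the $\bbZ$-like orbits, $(iii)$ the $\bbN$-like and reverse-$\bbN$-like orbits. By Definition~\ref{def:FunctionSums} and associativity/commutativity of $\oplus$ on $\fSs(X)/\!\sim$, $f$ is equivalent to the direct sum of the restrictions to these three families. The restriction to family $(i)$ is by definition $\mathrm R(f)$ (using the enumeration $\bar n$ of cycle sizes; note that the finitely-many-finite-orbits case collapses into the finitely-many-infinite-orbits bookkeeping below). For families $(ii)$ and $(iii)$ I would use the combinatorial identity that is the heart of the matter: $N_+$ copies of the successor on $\bbN$ together with $N_-$ copies of the reverse successor on $\bbN$ are equivalent to $(N_+-N_-)$ copies of $s_\bbN$ (or $s_\bbN^{-1}$, or the empty map) \emph{plus} $\min\{N_+,N_-\}$ copies of $s_\bbZ$; this is simply the observation that an $\bbN$-orbit and a reverse-$\bbN$-orbit can be spliced into a single $\bbZ$-orbit by redirecting the ``loose end'' of the backward-infinite orbit to the start of the forward-infinite one, a modification of $f$ on one point per pair. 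Combining these $\min\{N_+,N_-\}$ new $\bbZ$-orbits with the $\ell$ original $\bbZ$-like orbits, and invoking $s_{\bbZ\times\bbZ}$ when infinitely many $\bbZ$-orbits arise, gives exactly $\mathrm Z(f)$; what is left over is exactly $\mathrm S(f)$. This yields $f \sim \mathrm R(f)\oplus \mathrm Z(f)\oplus \mathrm S(f)$.

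For the classification, the ``if'' direction is immediate from the decomposition: if $\mathrm R(f)\sim\mathrm R(g)$, $\mathrm S(f)=\mathrm S(g)$, $\mathrm Z(f)=\mathrm Z(g)$, then $\mathrm R(f)\oplus\mathrm Z(f)\oplus\mathrm S(f)\sim \mathrm R(g)\oplus\mathrm Z(g)\oplus\mathrm S(g)$ (using that $\oplus$ respects $\sim$ in each coordinate), and $\sim$ is transitive. The ``only if'' direction is the substantive one: I must show the three parts are invariants of the equivalence class. Suppose $h$ is an almost bijection with $h\circ f =^* g\circ h$. The plan is to observe that $h$ induces, after deleting a finite set, a bijection between the orbits of $f$ and the orbits of $g$ that preserves the \emph{isomorphism type} of each orbit as a $\bbZ$-set — a finite cycle of size $m$ goes to a finite cycle of size $m$, a $\bbZ$-like orbit to a $\bbZ$-like orbit, an $\bbN$-like to $\bbN$-like, reverse to reverse. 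For finite orbits this is clear once one checks that $h$ maps all but finitely many cycles of $f$ bijectively onto cycles of $g$ of the same length (a pigeonhole/finiteness argument using that $h$ and $h^{-1}$ are almost defined); hence the multiset of finite cycle sizes agrees modulo a finite discrepancy, which is precisely $\mathrm R(f)\sim\mathrm R(g)$. For the infinite orbits, since only finitely many are $\bbN$-like or reverse-$\bbN$-like, $h$ must match these up to finite error, so $N_+$ and $N_-$ are preserved up to the finite ambiguity that can only convert an $\bbN$/reverse-$\bbN$ pair into a $\bbZ$-orbit — but that conversion is already accounted for, so $N_+-N_-$ and hence $\mathrm S(f)=\mathrm S(g)$, and likewise the count $\ell+\min\{N_+,N_-\}$ of $\bbZ$-like orbits (finite vs.\ infinite) is preserved, giving $\mathrm Z(f)=\mathrm Z(g)$.

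The main obstacle I expect is making the ``$h$ induces an orbit bijection'' argument precise: $h\circ f=^* g\circ h$ only says $h$ intertwines $f$ and $g$ off a finite set, so $h$ need not map an orbit of $f$ into a single orbit of $g$ on the nose — it does so only after removing finitely many points, and one must argue that a $\bbZ$-set orbit minus a finite set still has a well-defined ``type at infinity'' (finite cycles disappear entirely if hit; $\bbN$-like, reverse-$\bbN$-like, and $\bbZ$-like orbits each retain exactly one infinite ``tail structure'' recognizable from $h$). The cleanest way to handle this is probably to pass to the induced automorphisms and use Lemma~\ref{lem:TriviallyConjugate}, phrasing everything in terms of $\alpha_f,\alpha_g$ and the conjugacy $\alpha_h$, so that ``finite error'' becomes literally ``equality in $\pnmf$'', though one still has to translate orbit-type statements back to the level of functions. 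I would isolate the finite-cycle bookkeeping as one sublemma and the infinite-orbit bookkeeping (the $N_+,N_-,\ell$ accounting, including the $\bbN$-plus-reverse-$\bbN$-equals-$\bbZ$ phenomenon) as a second, since the latter is where the nontrivial identities among $s_{k\bbN}$, $s_{k\bbZ}$, and $s_{\bbZ\times\bbZ}$ actually get used.
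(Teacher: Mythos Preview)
Your proof of the decomposition $f\sim \mathrm R(f)\oplus\mathrm Z(f)\oplus\mathrm S(f)$ is essentially the paper's: partition into orbits, discard the finitely many non-cycle finite orbits, and splice each $\bbN$-like/reverse-$\bbN$-like pair into a single $\bbZ$-like orbit by redefining $f$ at one point. The paper makes the same moves in the same order.

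For the ``only if'' direction the approaches diverge slightly. You propose to show directly that an almost bijection $h$ with $h\circ f=^*g\circ h$ induces, modulo a finite set, a type-preserving bijection between the orbits of $f$ and those of $g$, and you correctly flag the difficulty: because the intertwining is only $=^*$, $h$ need not carry orbits to orbits on the nose, and removing finitely many points can split an infinite orbit. The paper sidesteps this by isolating a single clean sublemma: \emph{if $g$ is the restriction of $f$ to a cofinite set, then $\mathrm R(g)\sim\mathrm R(f)$, $\mathrm S(g)=\mathrm S(f)$, $\mathrm Z(g)=\mathrm Z(f)$}. This is checked by looking at what removing one point does to each orbit type (the only nontrivial case is a $\bbZ$-like orbit, which splits into an $\bbN$-like and a reverse-$\bbN$-like piece plus finitely many finite segments; the definitions of $\mathrm S$ and $\mathrm Z$ are rigged exactly so that this leaves them unchanged). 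The general case then falls out immediately: set $f'=f\restriction\dom(h)$ and $g'=g\restriction\ran(h)$; the sublemma gives $(\mathrm R,\mathrm S,\mathrm Z)(f)=(\mathrm R,\mathrm S,\mathrm Z)(f')$ and $(\mathrm R,\mathrm S,\mathrm Z)(g)=(\mathrm R,\mathrm S,\mathrm Z)(g')$, and now $h$ is a genuine bijection between the underlying sets of $f'$ and $g'$, so a further cofinite restriction (to kill the finitely many points where $h\circ f'\neq g'\circ h$) makes $h$ an exact conjugacy, whence the orbit structures of $f'$ and $g'$ literally match.

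Your route would work, but the bookkeeping you anticipate (tracking $N_+$, $N_-$, $\ell$ through finite modifications, and the ``type at infinity'' of a punctured orbit) is exactly what the paper's cofinite-restriction lemma packages once and for all. I would recommend adopting that reduction; your proposed fallback of passing to $\pnmf$ via Lemma~\ref{lem:TriviallyConjugate} is unnecessary here and would make the argument longer rather than shorter.
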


\begin{proof}
If $O_f(n)$ is $\N$-like and $O_f(m)$ is reverse $\N$-like, then the restriction of $f$ to $O_f(n) \cup O_f(m)$   is equivalent to $s_\Z$, a single $\Z$-like orbit. (To see this, extend $f$ to $O_f(m)\cup O_f(n)$ by sending the unique element of $f[O_f(n)]\setminus O_f(n)$  to the unique element of $O_f(m)\setminus f[O_f(m)]$.) 
Bearing this in mind, the desired conclusion is an almost immediate consequence of Definition~\ref{def:OrbitStructure}, as follows. 
First, decompose $f$ into orbits. 
Because $f$ is an almost permutation of $\N$, it can have only finitely many finite orbits that are not finite cycles. Delete these (if any exist), 
and delete the other finite orbits as well if there are only finitely many of them. 
Next, exchange any pair of $\N$-like and reverse $\N$-like orbits for a single $\Z$-like orbit. 
These changes result in a map equivalent to $f$, and this map is equivalent to $\mathrm{R}(f) \oplus \mathrm{Z}(f) \oplus \mathrm{S}(f)$. 

To prove the ``moreover"  part, we first consider the case when $g$ is the restriction of $f$ to a cofinite set. Let $S \subseteq \bbN$ be the complement of this set. Then $S$ intersects only finitely many orbits of $f$, and by induction it suffices to consider the case when it intersects only one orbit, $O_f(n)$. The only nontrivial case is when this is a two-sided infinite orbit, in which case passing from $f$ to $g$ breaks it into two one-sided infinite orbits, one $\N$-like and the other reverse $\N$-like, together with finitely many (possibly zero) finite orbits. It is a feature of our definition that these two infinite orbits contribute to $\mathrm S(g)$ and $\mathrm Z(g)$ in the same way that the single $\Z$-like orbit contributes to $\mathrm S(f)$ and $\mathrm Z(f)$. This proves the special case. 

For the general case, fix equivalent $f$ and $g$ and an almost bijection $h$ witnessing the equivalence. Let $f'=f\rs\dom(h)$ and $g'=g\rs \ran(h)$. By the previous paragraph, $f\sim f'$ and $g\sim g'$. As $h$ witnesses $f'\sim g'$, this concludes the proof. 
\end{proof}

\begin{definition}
If $f$ is an almost permutation of $\N$, define
\[
\Index(f) \,=\,
|\bbN\setminus \dom(f)|-|\bbN\setminus \ran(f)|
\]
This is the \emph{index} of $f$, or more precisely the \emph{prodigal index} of $f$. 
The (\emph{index}) \emph{parity} of $f$, denoted $\parity(f)$, is $0$ if $\Index(f)$ is even, and $1$ if $\Index(f)$ is odd. 
\hfill\Coffeecup
\end{definition}

The  index first appeared in a rudimentary form in a 1915 theorem of Andreoli (see \cite[(12) on p. 25]{cornulier2019near}). 
The quantity $-\Index(f)$ is called the \emph{banker index} of $f$; see the discussion in \cite[\S 1.C]{cornulier2019near}. The two indices are clearly interchangeable, and our choice of prodigal index over banker index is dictated by the analogy with the Fredholm index of operators on a Hilbert space (see e.g., \cite[3.3.12]{Pede:Analysis}). 

As is well-known, the Fredholm index of Fredholm operators on a Hilbert space is preserved under compact perturbations (\cite[Theorem~3.3.17]{Pede:Analysis}). Similarly, the index of almost permutations is preserved under the $=^*$ relation. To prove this, it suffices to notice that every almost permutation $f$ has the same index as $f \restriction A$ for any cofinite $A \subseteq \mathrm{dom}(f)$; this implies that if $f =^* g$ then $f$ and $g$ both have the same index as $f \cap g$, hence the same index as each other. 
Similar reasoning shows that the index of an almost permutation is invariant under equivalence as well: i.e., $f \sim g$ implies $\Index(f) = \Index(g)$. 

Furthermore, $\Index(f\circ g)=\Index(f)+\Index(g)$ (see \cite[Theorem 6.1]{van1990automorphism}). 
Thus the index is a group homomorphism from $\fSs(\bbN)$ to $\bbZ$ whose kernel is $\fS_0^*(\bbN)$, the group of permutations of $\bbN$ modulo almost equality. 
It follows that the parity function is a group homomorphism from $\fSs(\bbN)$ to $\bbZ/2\bbZ$ whose kernel is the group of parity zero almost permutations.

In light of Lemma~\ref{lem:AlmostEqualAPs}, and the fact that $\Index(f) = \Index(g)$ whenever $f =^* g$, we can define
\[
\Index(\alpha_f) \,=\, \Index(f) \qquad \text{ and } \qquad \parity(\alpha_f) \,=\, \parity(f).
\]
Just like the Fredholm index of operators on an infinite-dimensional, separable Hilbert space, the prodigal index is a group homomorphism from the group of trivial automorphisms of $\pnmf$ into $(\bbZ,+)$; in other words, 
\[
\Index(\alpha_f \circ \alpha_g) \,=\, \Index(\alpha_f) + \Index(\alpha_g).
\] 
(This follows from the facts that $[f] \mapsto \alpha_f$ is a group isomorphism from $\fSs(X)$ to $\TrAut(\pxmf)$, and the index is a group homomorphism from $\fSs(\bbN)$ to $\bbZ$. See \cite[Section 6]{van1990automorphism} for further details.) 
In particular, this proves: 

\begin{proposition}\label{prop:IndexObstruction}
If two trivial automorphisms of $\pnmf$ are trivially conjugate, then they have the same index. \qed
\end{proposition}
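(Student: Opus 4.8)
The plan is to read off the statement from the two facts established just before it: that $[f]\mapsto\alpha_f$ is a group isomorphism from $\fSs(\bbN)$ onto $\TrAut(\pnmf)$ (Lemma~\ref{lem:AlmostEqualAPs}), and that $\Index$ is a group homomorphism from $\fSs(\bbN)$ into $(\bbZ,+)$ that is invariant under $=^*$, hence descends to a group homomorphism on $\TrAut(\pnmf)$. Granting these, conjugate elements of $\TrAut(\pnmf)$ automatically have the same index, since a homomorphism into an abelian group is constant on conjugacy classes.

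Concretely, I would unwind the hypothesis as follows. Suppose $\alpha$ and $\beta$ are trivial automorphisms of $\pnmf$ that are trivially conjugate, and fix $\gamma\in\TrAut(\pnmf)$ with $\gamma\circ\alpha=\beta\circ\gamma$, i.e.\ $\beta=\gamma\circ\alpha\circ\gamma^{-1}$. Write $\alpha=\alpha_f$, $\beta=\alpha_g$, $\gamma=\alpha_h$ for suitable almost permutations $f,g,h$ of $\bbN$. Using $\alpha_{g_1\circ g_2}=\alpha_{g_1}\circ\alpha_{g_2}$ and $\alpha_{g^{-1}}=\alpha_g^{-1}$, this becomes $\alpha_g=\alpha_{h\circ f\circ h^{-1}}$, so by Lemma~\ref{lem:AlmostEqualAPs} we get $g=^* h\circ f\circ h^{-1}$. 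Now invoke the homomorphism property of $\Index$ together with its $=^*$-invariance:
\[
\Index(g)=\Index(h\circ f\circ h^{-1})=\Index(h)+\Index(f)+\Index(h^{-1}).
\]
Since $\Index(h)+\Index(h^{-1})=\Index(h\circ h^{-1})=\Index(\id_{\bbN})=0$, we conclude $\Index(g)=\Index(f)$, that is, $\Index(\beta)=\Index(\alpha)$. (Equivalently, one can avoid representatives altogether: on $\TrAut(\pnmf)$ one has $\Index(\gamma^{-1})=-\Index(\gamma)$, whence $\Index(\gamma\circ\alpha\circ\gamma^{-1})=\Index(\gamma)+\Index(\alpha)-\Index(\gamma)=\Index(\alpha)$.)

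There is no genuine obstacle in this argument: it is an immediate corollary of the homomorphism property, and the only nontrivial input — that $\Index$ is well defined and additive on $\TrAut(\pnmf)$ — has already been carried out in the discussion preceding the proposition. In particular, composing with the quotient $\bbZ\to\bbZ/2\bbZ$ also yields that trivially conjugate trivial automorphisms have the same index parity, which is the weaker conclusion that will be relevant in later sections.
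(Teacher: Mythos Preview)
Your proof is correct and matches the paper's approach exactly: the proposition is stated with a \qed{} and follows immediately from the preceding discussion that $\Index$ is a group homomorphism from $\TrAut(\pnmf)$ into the abelian group $(\bbZ,+)$, which is precisely the argument you have spelled out.
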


 Similarly, from this and Lemma~\ref{lem:TriviallyConjugate}, it follows that equivalent almost permutations always have the same index. 

Because equivalent permutations have the same index, Lemma~\ref{lem:OrbitStructure} implies 
\begin{align*}
\Index(f) &\,=\, \Index(\rmR(f) \oplus \rmZ(f) \oplus \rmS(f)) \\
&\,=\, \Index(\rmR(f))+\Index(\rmZ(f))+\Index(\rmS(f)) \,=\, \Index(\mathrm S(f)).
\end{align*}
In other words, $\Index(f)$ depends only on $\mathrm{S}(f)$. 
Specifically, 
\begin{align*}
\Index(f) &= k > 0 \quad \Leftrightarrow \quad \mathrm{S}(f) = s_{k\N}, \\
\Index(f) &= k < 0 \quad \Leftrightarrow \quad \mathrm{S}(f) = s_{|k|\N}^{-1}, \\
\Index(f) &= 0 \hspace{10.1mm} \Leftrightarrow \quad \mathrm{S}(f) \text{ is empty}. 
\end{align*} 

One could paraphrase Proposition~\ref{prop:IndexObstruction} as saying that the index provides an ``obstruction'' to trivial conjugacy: automorphisms with different indices are not trivially conjugate. 
Let us point out, though, that the index is not an obstruction to conjugacy in general. This was proved recently by Brian \cite{brian2024does}, by showing that \ch implies $\s$ and $\s^{-1}$ are conjugate. (Note that these maps have index $1$ and $-1$, respectively.) 
In the following section we show that while the index does not provide an obstruction to conjugacy, the parity of the index does.

\section{The parity obstruction}
 \label{S.Parity}

The goal of this section is to prove that conjugate trivial automorphisms of $\pnmf$ have the same index parity. Note that this is a theorem of $\mathsf{ZFC}$, and does not require \ch. 

\begin{definition}\label{def:Components}
Let $\alpha$ be an automorphism of $\pnmf$. Define 
$$\mathrm{FIX}_\alpha = \set{c \in \pnmf}{\alpha(c)=c}.$$
$\FIX_\alpha$ is a Boolean subalgebra of $\pnmf$, and the atoms of $\FIX_\alpha$ are called the \emph{components} of $\alpha$. That is, $c$ is a component of $\alpha$ if $c \in \mathrm{FIX}_\alpha \setminus \{[\emptyset]_\mathrm{Fin}\}$ and if $x \notin \FIX_\alpha$ whenever $[\emptyset]_\mathrm{Fin} < x < c$. 
\hfill\Coffeecup
\end{definition}

Observe that $\FIX_\s$ and $\FIX_{\s^{-1}}$ are both the trivial algebra $\{[\emptyset]_\mathrm{Fin},[\N]_\mathrm{Fin}\}$. 
In fact, $\s$ and $\s^{-1}$ both enjoy a stronger property: they are \emph{incompressible}, which means that if $x \in \pnmf$ and $x \neq [\emptyset]_\mathrm{Fin},[\N]_\mathrm{Fin}$, then $\s(x),\s^{-1}(x) \not\leq x$. 
This is not difficult to see; a proof can be found in \cite[Theorem 3.2]{brian2024does} or \cite[Theorem 5.3]{brianPsets}. 

Furthermore, $\s$ and $\s^{-1}$ are the only trivial automorphisms of $\pnmf$ for which $\FIX_\alpha = \{[\emptyset]_\mathrm{Fin},[\N]_\mathrm{Fin}\}$. 
(It is worth noting, however, that \ch implies there are non-trivial automorphisms $\alpha$ with $\FIX_\alpha$ the trivial subalgebra; see \cite[Theorem 5.7]{brianPsets}.) 
With this in mind, the idea behind Definition~\ref{def:Components} is that when $\alpha$ is trivial, each component of $\alpha$ is simply the clopen set corresponding to some $\N$-like or reverse $\N$-like orbit in the almost permutation inducing $\alpha$. This is made more precise in Lemmas~\ref{lem:FIX2} and \ref{lem:FIX3} below. 

\begin{lemma} \label{lem:FIX}
Let $\alpha$ and $\beta$ be automorphisms of $\pnmf$. If $\gamma$ is a conjugacy map from $\alpha$ to $\beta$, then $\FIX_\beta = \{ \gamma(c) \mid\, c \in \FIX_\alpha\}$. In other words, conjugacy mappings send fixed points to fixed points. Furthermore, $c$ is a component of $\alpha$ if and only if $\gamma(c)$ is a component of $\beta$.
\end{lemma}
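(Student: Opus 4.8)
The statement is that a conjugacy $\gamma$ from $\alpha$ to $\beta$ (so $\gamma\circ\alpha=\beta\circ\gamma$) carries $\FIX_\alpha$ bijectively onto $\FIX_\beta$ and carries components to components. The plan is entirely algebraic and uses only that $\gamma$ is an automorphism satisfying the conjugacy identity, together with the fact that automorphisms of a Boolean algebra preserve the order, meets, joins, complements, and the top and bottom elements.

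First I would establish the claim $\gamma[\FIX_\alpha]=\FIX_\beta$. Take $c\in\FIX_\alpha$, so $\alpha(c)=c$. Applying $\gamma$ and using $\gamma\circ\alpha=\beta\circ\gamma$ gives $\beta(\gamma(c))=\gamma(\alpha(c))=\gamma(c)$, so $\gamma(c)\in\FIX_\beta$; thus $\gamma[\FIX_\alpha]\subseteq\FIX_\beta$. For the reverse inclusion, note $\gamma^{-1}$ is a conjugacy from $\beta$ to $\alpha$: from $\gamma\circ\alpha=\beta\circ\gamma$ we get $\alpha\circ\gamma^{-1}=\gamma^{-1}\circ\beta$, i.e. $\gamma^{-1}\circ\beta=\alpha\circ\gamma^{-1}$, which is the conjugacy identity for $\gamma^{-1}$ from $\beta$ to $\alpha$. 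Applying the inclusion just proved to $\gamma^{-1}$ yields $\gamma^{-1}[\FIX_\beta]\subseteq\FIX_\alpha$, hence $\FIX_\beta\subseteq\gamma[\FIX_\alpha]$. Since $\gamma$ is injective, $\gamma$ restricts to a bijection $\FIX_\alpha\to\FIX_\beta$, and in fact to a Boolean-algebra isomorphism because $\gamma$ already preserves all Boolean operations.

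For the ``furthermore'' clause, recall that a component of $\alpha$ is by definition an atom of the Boolean subalgebra $\FIX_\alpha$. Since $\gamma\restriction\FIX_\alpha$ is an isomorphism of Boolean algebras onto $\FIX_\beta$, it sends atoms to atoms: explicitly, $c$ is an atom of $\FIX_\alpha$ iff $c\neq[\emptyset]_{\mathrm{Fin}}$ and there is no $x$ with $[\emptyset]_{\mathrm{Fin}}<x<c$ in $\FIX_\alpha$; applying $\gamma$ (which preserves $<$, sends $[\emptyset]_{\mathrm{Fin}}$ to $[\emptyset]_{\mathrm{Fin}}$, and maps $\FIX_\alpha$ onto $\FIX_\beta$) translates this verbatim into the statement that $\gamma(c)$ is an atom of $\FIX_\beta$, i.e. a component of $\beta$. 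The converse direction again follows by applying the same reasoning to $\gamma^{-1}$.

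I do not anticipate a genuine obstacle here; the only point to be careful about is the bookkeeping of which map is a conjugacy from which automorphism to which, i.e. verifying cleanly that $\gamma^{-1}$ conjugates $\beta$ to $\alpha$ so that both inclusions and both directions of the component statement come out symmetrically. Everything else is a direct application of the conjugacy identity and the fact that Boolean-algebra automorphisms preserve the relevant structure.
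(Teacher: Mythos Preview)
Your proposal is correct and takes essentially the same approach as the paper: both arguments use the conjugacy identity to show $\alpha(c)=c\iff\beta(\gamma(c))=\gamma(c)$ and then observe that $\gamma$, being an automorphism, preserves the order and hence sends atoms of $\FIX_\alpha$ to atoms of $\FIX_\beta$. The only cosmetic difference is that the paper handles both inclusions at once via the equivalent formulation $\alpha=\gamma^{-1}\circ\beta\circ\gamma$, whereas you do the two inclusions separately using $\gamma^{-1}$.
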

\begin{proof} 
If $\gamma$ is a conjugacy mapping from $\alpha$ to $\beta$, then $\alpha(c) = \gamma^{-1} \circ \beta \circ \gamma(c)$ for all $c \in \pnmf$. Because $\gamma$ is injective, $\alpha(c) = c$ if and only if $\beta(\gamma(c)) = \gamma(c)$.

For the ``furthermore'' part, note that $\gamma$, being an automorphism, preserves the~$\leq$ relation. Thus $c$ is an atom of $\FIX_\alpha$ if and only if $\gamma(c)$ is an atom of $\FIX_\beta$.
\end{proof}

Given an almost permutation $f$ of $\N$ and some $n \in \N$ with a $\Z$-like orbit, let us denote $O^+_f(n) = \set{f^i(n)}{i > 0}$ and $O^-_f(n) = \set{f^i(n)}{i < 0}$. 

\begin{lemma} \label{lem:FIX2}
Let $f$ be an almost permutation of $\N$. Let $O$ be either an $\N$-like orbit, or a reverse $\N$-like orbit, or a set of the form $O^+_f(n)$ or $O^-_f(n)$ for some $n$ with a $\Z$-like orbit. 
If $A \subseteq \N$ and $O \cap A$ and $O \setminus A$ are both infinite, then $[A]_\mathrm{Fin} \notin \FIX_{\alpha_f}$. 
\end{lemma}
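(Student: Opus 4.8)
The plan is to argue by contradiction. Suppose $[A]_\mathrm{Fin}\in\FIX_{\alpha_f}$; by the definition of $\alpha_f$ this means $f[A]=^*A$, and intersecting with $O\subseteq\N$ gives $f[A]\cap O=^*A\cap O$. The conceptual point is that on each of the four kinds of set $O$ listed in the statement, the map $f$ (or its inverse) behaves exactly like the successor map $n\mapsto n+1$ on a copy of $\N$. Under that identification, the characteristic sequence of $f[A]\cap O$ is a one-step shift of the characteristic sequence of $A\cap O$ (possibly with one "boundary bit" changed), so the relation $f[A]\cap O=^*A\cap O$ says precisely that this characteristic sequence is invariant under the shift modulo finite — which forces it to be eventually constant, contradicting that both $A\cap O$ and $O\setminus A$ are infinite.

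I would carry this out first for an $\N$-like orbit $O$. Since $A\cap O$ and $O\setminus A$ are infinite, $O$ is infinite, so it can be enumerated as $O=\{a_0,a_1,a_2,\dots\}$ where $a_0$ is the unique element of $O$ not in $\ran(f)$ and $a_{i+1}=f(a_i)$ for all $i$. For $j\ge 1$ the unique $f$-preimage of $a_j$ is $a_{j-1}$, and any element of $O$ lying in $\ran(f)$ has its $f$-preimage in $O$; hence for $j\ge1$ we have $a_j\in f[A]$ iff $a_{j-1}\in A$, while $a_0\notin\ran(f)\supseteq f[A]$. Writing $\chi\in\{0,1\}^{\N}$ for the characteristic function of $A\cap O$ along this enumeration, the characteristic function of $f[A]\cap O$ is $j\mapsto\chi(j-1)$ for $j\ge1$ and $0$ for $j=0$, so $f[A]\cap O=^*A\cap O$ yields $\chi(j)=\chi(j-1)$ for all but finitely many $j$, i.e.\ $\chi$ eventually constant — contradiction.

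The remaining three cases reduce to, or directly copy, this computation. A reverse $\N$-like orbit of $f$ is an $\N$-like orbit of $f^{-1}$, and since $\alpha_f(c)=c$ iff $\alpha_f^{-1}(c)=c$ we have $\FIX_{\alpha_f}=\FIX_{\alpha_{f^{-1}}}$, so the previous case applied to $f^{-1}$ finishes it. For $O=O^+_f(n)$ with $n$ on a $\Z$-like orbit, enumerate $O=\{b_1,b_2,\dots\}$ with $b_i=f^i(n)$, so $f(b_i)=b_{i+1}$; for $j\ge2$ the $f$-preimage of $b_j$ is $b_{j-1}\in O$, while for $j=1$ it is $n\notin O$, so the characteristic function of $f[A]\cap O$ again agrees with the shift of that of $A\cap O$ except for the fixed boundary bit at $j=1$, and the same conclusion follows. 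For $O=O^-_f(n)$, enumerate $O=\{c_1,c_2,\dots\}$ with $c_i=f^{-i}(n)$, so $f(c_{i+1})=c_i$ for every $i\ge1$; then $c_j\in f[A]$ iff $c_{j+1}\in A$ for all $j\ge1$, giving the shift relation with no boundary bit at all, and hence the same contradiction.

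The only step needing genuine care — and the closest thing to an obstacle — is the boundary bookkeeping: correctly identifying the single element of $O$ that lies outside $\ran(f)$ (or whose $f$-preimage leaves $O$), and verifying that no element of $O$ is the $f$-image of a point outside $O$, so that $f[A]\cap O$ really is determined by $A\cap O$ alone. Once that is settled, each of the four cases collapses to the one-line shift argument above.
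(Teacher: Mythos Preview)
Your proof is correct and is essentially the same argument as the paper's: both hinge on the observation that if $A\cap O$ and $O\setminus A$ are infinite then there are infinitely many ``transition points'' along $O$ where $A$ switches to its complement. The paper packages this more directly by defining the single witness set $B=\{f^i(n)\in O: f^i(n)\in A,\ f^{i+1}(n)\notin A\}$, noting $B\subseteq A$ is infinite while $f[B]\cap A=\emptyset$, and handling all four cases uniformly in one line---whereas you phrase the same idea as ``the characteristic sequence would have to be eventually constant'' and treat the four cases separately.
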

\begin{proof} 
Suppose that $n$ has either an $\N$-like orbit or a reverse $\N$-like orbit, and suppose that $O_f(n) \cap A$ and $O_f(n) \setminus A$ are both infinite. Let 
$$B = \set{f^i(n) \in O_f(n)}{f^i(n) \in A \text{ and } f^{i+1}(n) \notin A}.$$
Because $O_f(n) \cap A$ and $O_f(n) \setminus A$ are both infinite, $B$ is infinite. 
But $B\subseteq A$, while $\alpha_f([B]_\mathrm{Fin}) = [f[B]]_\mathrm{Fin} \not\leq [A]_\mathrm{Fin}$. 
Hence $\alpha_f([A]_\mathrm{Fin}) \neq [A]_\mathrm{Fin}$. 

If $n$ has a $\Z$-like orbit, the same argument works, verbatim, if we simply replace $O_f(n)$ with either $O^+_f(n)$ or $O^-_f(n)$.
\end{proof}

\begin{lemma} \label{lem:FIX3}
Let $f$ be an almost permutation of $\N$. Then components of $\alpha_f$ are precisely the sets $[O]_\mathrm{Fin}$ where $O$ be either an $\N$-like orbit, or a reverse $\N$-like orbit, or a set of the form $O^+_f(n)$ or $O^-_f(n)$ for some $n$ with a $\Z$-like orbit. 
\end{lemma}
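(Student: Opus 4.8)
The plan is to prove the two inclusions separately, using Lemma~\ref{lem:FIX2} for the genuinely ``atomic'' behaviour and a routine bookkeeping argument modulo finite sets for everything else.

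First I would verify that each set $[O]_\mathrm{Fin}$ of the listed form is indeed a component. For such an $O$, both $f$ and $f^{-1}$ map $O$ into $O$ except for at most one endpoint, so $f[O]\,\triangle\, O$ is finite, i.e.\ $f[O]=^*O$ and hence $[O]_\mathrm{Fin}\in\FIX_{\alpha_f}$. Since $O$ is infinite, $[O]_\mathrm{Fin}\neq[\emptyset]_\mathrm{Fin}$. To see it is an atom, suppose $[\emptyset]_\mathrm{Fin}<[A]_\mathrm{Fin}<[O]_\mathrm{Fin}$; replacing $A$ by $A\cap O$ we may assume $A\subseteq O$, and then $A$ and $O\setminus A$ are both infinite, so Lemma~\ref{lem:FIX2} (applied to exactly this $O$) gives $[A]_\mathrm{Fin}\notin\FIX_{\alpha_f}$. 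Thus no element of $\FIX_{\alpha_f}$ lies strictly between $[\emptyset]_\mathrm{Fin}$ and $[O]_\mathrm{Fin}$, so $[O]_\mathrm{Fin}$ is a component.

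For the converse, fix a component $c=[C]_\mathrm{Fin}$ of $\alpha_f$. The technical heart, which I would record first, is: \emph{if $[C]_\mathrm{Fin}\in\FIX_{\alpha_f}$ and $P\subseteq\N$ satisfies $f[P]\subseteq^*P$ and $f^{-1}[P]\subseteq^*P$, then $[C\cap P]_\mathrm{Fin}\in\FIX_{\alpha_f}$.} This follows by intersecting $f[C]=^*C$ and $f^{-1}[C]=^*C$ with $P$: one gets $f[C\cap P]\subseteq^* C\cap P$ and $f^{-1}[C\cap P]\subseteq^* C\cap P$, and applying $f$ to the second inclusion (and using that $\ran(f)$ is cofinite) yields the reverse containment $C\cap P\subseteq^* f[C\cap P]$; the whole step is $=^*$-bookkeeping. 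This applies with $P$ any orbit of $f$, and also with $P=O^+_f(n)$ or $P=O^-_f(n)$ for $n$ with a $\Z$-like orbit, since in all these cases $f[P]\,\triangle\,P$ and $f^{-1}[P]\,\triangle\,P$ are finite. Now decompose $\N$ into the orbits of $f$. Each $[C\cap O]_\mathrm{Fin}$ then lies in $\FIX_{\alpha_f}$ and is $\leq[C]_\mathrm{Fin}$, and since $[C]_\mathrm{Fin}$ is an atom of $\FIX_{\alpha_f}$, each $C\cap O$ is finite or cofinite in $C$; as $C$ is infinite, exactly one orbit $O$ has $C\cap O$ infinite and then $C\setminus O$ is finite, so $[C]_\mathrm{Fin}=[C\cap O]_\mathrm{Fin}$ for a single orbit $O$, which must be infinite, hence $\N$-like, reverse $\N$-like, or $\Z$-like. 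If $O$ is $\N$-like or reverse $\N$-like and $O\setminus C$ were infinite, Lemma~\ref{lem:FIX2} would contradict $[C]_\mathrm{Fin}\in\FIX_{\alpha_f}$, so $[C]_\mathrm{Fin}=[O]_\mathrm{Fin}$. If $O$ is $\Z$-like, write $O=^*O^+_f(n)\sqcup O^-_f(n)$; then $[C\cap O^+_f(n)]_\mathrm{Fin}$ and $[C\cap O^-_f(n)]_\mathrm{Fin}$ are disjoint elements of $\FIX_{\alpha_f}$ below the atom $[C]_\mathrm{Fin}$, so exactly one of them equals $[C]_\mathrm{Fin}$, say $[C]_\mathrm{Fin}=[C\cap O^+_f(n)]_\mathrm{Fin}$, and Lemma~\ref{lem:FIX2} applied to $O^+_f(n)$ forces $[C]_\mathrm{Fin}=[O^+_f(n)]_\mathrm{Fin}$; the other subcase is symmetric.

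The main obstacle is the bookkeeping step producing $[C\cap P]_\mathrm{Fin}\in\FIX_{\alpha_f}$, and specifically making sure it is phrased so as to cover not only honest orbits but also the ``half-orbits'' $O^+_f(n)$ and $O^-_f(n)$, on which $f$ or $f^{-1}$ genuinely moves one endpoint out of $P$; working throughout modulo finite sets (and with the hypothesis stated as $f[P]\subseteq^*P$ and $f^{-1}[P]\subseteq^*P$) makes those finitely many exceptional points harmless. Everything else is a direct application of Lemma~\ref{lem:FIX2} together with the fact that $\FIX_{\alpha_f}$ is a Boolean subalgebra of $\pnmf$.
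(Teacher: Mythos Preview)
Your argument has one gap. From ``each $C\cap O$ is finite or cofinite in $C$'' together with ``$C$ is infinite'' you conclude that exactly one orbit $O$ has $C\cap O$ infinite. The ``at most one'' direction is fine (two disjoint sets cofinite in $C$ would force $C$ finite), but ``at least one'' does not follow: when $f$ has infinitely many orbits, $C$ could meet infinitely many of them, each in a finite set, and then no single $C\cap O$ is infinite. This is precisely the residual case the paper's proof treats separately, by partitioning such an $A$ into two orbit-saturated infinite pieces $A_0,A_1$ and observing that both lie in $\FIX_{\alpha_f}$, contradicting atomicity. Your own technical claim patches it just as well: any union $P$ of orbits satisfies $f[P]\subseteq P$ and $f^{-1}[P]\subseteq P$, so split the orbits meeting $C$ into two families each contributing infinitely much of $C$, let $P$ be the union of one family, and obtain $[C\cap P]\in\FIX_{\alpha_f}$ with $[\emptyset]_{\Fin}<[C\cap P]_{\Fin}<[C]_{\Fin}$. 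One sentence suffices to close the gap.

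Apart from this, your route differs from the paper's. The paper fixes an arbitrary $[A]_{\Fin}$ and, for each $O$ of the listed type, runs a trichotomy on $[O]_{\Fin}\wedge[A]_{\Fin}$ (invoking Lemma~\ref{lem:FIX2} when $O\cap A$ and $O\setminus A$ are both infinite) to show $[A]_{\Fin}$ is not a component unless $[A]_{\Fin}=[O]_{\Fin}$; it then handles the leftover ``$A$ meets every orbit finitely'' case by the orbit-splitting argument above. You instead isolate a general closure property---$\FIX_{\alpha_f}$ is stable under intersection with any almost-$f$-invariant $P$---and use atomicity to localize $[C]_{\Fin}$ first to a single orbit and then to a half-orbit, invoking Lemma~\ref{lem:FIX2} only at the very end. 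Your version packages a reusable fact about $\FIX_{\alpha_f}$; the paper's is more direct but slightly more ad hoc.
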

\begin{proof} 
Let $O$ be either an $\N$-like orbit, or a reverse $\N$-like orbit, or a set of the form $O^+_f(n)$ or $O^-_f(n)$ for some $n$ with a $\Z$-like orbit. Clearly $\alpha_f([O]_\mathrm{Fin}) = [O]_\mathrm{Fin}$, i.e., $[O]_\mathrm{Fin} \in \FIX_{\alpha_f}$. Furthermore, the previous lemma implies that if $A \sub \N$ and $[\emptyset]_\mathrm{Fin} < [A]_\mathrm{Fin} < [O]_\mathrm{Fin}$, then $[A]_\mathrm{Fin} \notin \FIX_{\alpha_f}$. Thus $[O]_\mathrm{Fin}$ is a component of $\alpha_f$. 

It remains to show $\alpha_f$ has no other components. 
Fix $A \subseteq \N$.

Let $O$ be either an $\N$-like orbit, or a reverse $\N$-like orbit, or a set of the form $O^+_f(n)$ or $O^-_f(n)$ for some $n$ with a $\Z$-like orbit. 
If $[O]_\mathrm{Fin} < [A]_\mathrm{Fin}$ then (because $[O]_\mathrm{Fin} \in \FIX_{\alpha_f}$) $[A]_\mathrm{Fin}$ is not a component of $\alpha_f$. 
If $[\emptyset]_\mathrm{Fin} < [O]_\mathrm{Fin} \wedge [A]_\mathrm{Fin} < [A]_\mathrm{Fin}$, then $[A]_\mathrm{Fin}$ is not a component of $\alpha_f$ by the previous lemma.  
Hence if $[A]_\mathrm{Fin}$ is a component of $\alpha_f$, we must have either $[O]_\mathrm{Fin} = [A]_\mathrm{Fin}$ or $[O]_\mathrm{Fin} \cap [A]_\mathrm{Fin} = [\emptyset]_\mathrm{Fin}$.  

Suppose, then, that $A \cap O$ is finite whenever $O$ is an $\N$-like orbit, or a reverse $\N$-like orbit, or a set of the form $O^+_f(n)$ or $O^-_f(n)$ for some $n$ with a $\Z$-like orbit. 
This implies $A$ meets every orbit in a finite set. So either $A$ is finite or it meets infinitely many orbits. If $A$ is finite, $[A]_\mathrm{Fin} = [\emptyset]_\mathrm{Fin}$ is not a component of $\alpha_f$. If $A$ meets infinitely many orbits, then there is a partition of $A$ into two infinite subsets $A_0$ and $A_1$ such that if $n \in A_i$ then $O_f(n) \cap A \sub A_i$. If $\alpha_f([A]_\mathrm{Fin}) = [A]_\mathrm{Fin}$, then this implies $\alpha_f([A_i]_\mathrm{Fin}) = [A_i]_\mathrm{Fin}$ for $i=0,1$. Consequently, either $[A]_\mathrm{Fin} \notin \FIX_{\alpha_f}$, or else $[A_0]_\mathrm{Fin},[A_1]_\mathrm{Fin} \in \FIX_{\alpha_f}$. Either way, $[A]_\mathrm{Fin}$ is not a component.
\end{proof} 

Recall from Lemma~\ref{lem:OrbitStructure} that every almost permutation $f$ of $\N$ is equivalent to $\mathrm R(f) \oplus \mathrm S(f) \oplus \mathrm Z(f)$. If $f = \mathrm R(f) \oplus \mathrm S(f) \oplus \mathrm Z(f)$ then, by the definition of the direct sum (Definition~\ref{def:FunctionSums}), the domain of $\mathrm R(f)$ is the subset of $\N$ containing those $n \in \N$ with finite $f$-orbits.

\begin{lemma} \label{lem:DefiningTheRotary}
Let $f$ be an almost permutation of $\N$ with $f = \mathrm R(f) \oplus \mathrm S(f) \oplus \mathrm Z(f)$, and let $D = \mathrm{dom}(\mathrm{R}(f))$. Then $[\N \setminus D]_{\Fin}$ is the supremum of all components of $\alpha_f$.
\end{lemma}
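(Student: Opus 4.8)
The claim is that $[\N\setminus D]_{\Fin}$ is the supremum, in $\pnmf$, of the set of components of $\alpha_f$. By Lemma~\ref{lem:FIX3}, the components are exactly the classes $[O]_{\Fin}$ where $O$ ranges over $\N$-like orbits, reverse $\N$-like orbits, and the half-orbits $O^+_f(n)$, $O^-_f(n)$ for $n$ with a $\Z$-like orbit. Since $f=\mathrm R(f)\oplus\mathrm S(f)\oplus\mathrm Z(f)$, the set $\N\setminus D$ is precisely the union of all the infinite orbits of $f$: the $\N$-like orbits and reverse $\N$-like orbits coming from the $\mathrm S(f)$ summand, and the $\Z$-like orbits coming from the $\mathrm Z(f)$ summand. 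So I want to show that $[\,\bigcup\{\text{infinite orbits}\}\,]_{\Fin}$ is the least upper bound of the component classes.

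First I would check that $[\N\setminus D]_{\Fin}$ is an upper bound: every $O$ listed above is a subset of some infinite orbit, hence $O\subseteq \N\setminus D$, so $[O]_{\Fin}\le [\N\setminus D]_{\Fin}$. The real content is that it is the \emph{least} upper bound, i.e.\ that any $[A]_{\Fin}$ with $[O]_{\Fin}\le [A]_{\Fin}$ for all components $[O]_{\Fin}$ already satisfies $[\N\setminus D]_{\Fin}\le [A]_{\Fin}$, equivalently $(\N\setminus D)\setminus A$ is finite. Suppose not. Then $A$ omits infinitely many points lying in infinite orbits. Here I split into two cases depending on how these omitted points distribute among the orbits. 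Case one: there is a single infinite orbit $C$ with $C\setminus A$ infinite. If $C$ is $\N$-like or reverse $\N$-like, then $O:=C$ is itself a component and $[O]_{\Fin}\le[A]_{\Fin}$ forces $O\setminus A=C\setminus A$ finite, contradiction. If $C$ is $\Z$-like, then one of $C\cap O^+_f(n)=O^+_f(n)$ or $C\cap O^-_f(n)=O^-_f(n)$ (for the base point $n$ chosen in the decomposition) has infinite difference with $A$, and that half-orbit is a component, giving the same contradiction. Case two: every infinite orbit meets $\N\setminus D\setminus A$ in a finite set, but infinitely many infinite orbits do so nontrivially. Then I need a component contained in an orbit that is \emph{not} almost contained in $A$ — but a single orbit contributes only finitely many omitted points, so its component class \emph{is} $\le [A]_{\Fin}$, and no single component witnesses the failure. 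This is the subtle point.

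To handle case two I would instead argue directly that $[\N\setminus D]_{\Fin}$ equals $\bigvee\{[O]_{\Fin}: O \text{ a component}\}$ by exhibiting it as a countable supremum that is actually attained: enumerate the infinite orbits (or half-orbits for $\Z$-like ones) as $O_0,O_1,\dots$; these are components with pairwise disjoint representatives whose union is $\N\setminus D$. Now if $[A]_{\Fin}$ is any upper bound, then for each $i$ we have $O_i\subseteq^* A$, i.e.\ $O_i\setminus A$ is finite. The danger is that $\bigcup_i (O_i\setminus A)$ could still be infinite even though each piece is finite. So the supremum argument alone does not close case two — I genuinely need more. The fix is to use the fact that $\alpha_f$ (being trivial, and a \emph{bijection between cofinite sets}) has only finitely many $\N$-like and finitely many reverse $\N$-like orbits; the only way to get infinitely many components is through infinitely many $\Z$-like orbits, i.e.\ through the half-orbits $O^\pm_f(n)$. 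In that situation I would choose the base points $n$ in the decomposition coherently and note that $O^+_f(n)\cup O^-_f(n)\cup\{n\}$ is the whole orbit, so if $A$ omitted one point from each of infinitely many full orbits I can group these into a single component-subset not almost contained in $A$ — concretely, pick $B=\{$one omitted point from each infinite orbit that has an omitted point$\}$; then $B$ is an infinite set meeting each orbit finitely, so by the final paragraph of the proof of Lemma~\ref{lem:FIX3}, $[B]_{\Fin}\notin\FIX_{\alpha_f}$, but $[B]_{\Fin}\le [\N\setminus D]_{\Fin}$, contradicting... nothing yet. I think the cleanest route is actually the following reformulation, which I would adopt.

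**Reformulation I would use.** Show directly: $[\N\setminus D]_{\Fin}\in\FIX_{\alpha_f}$ (clear, as $\N\setminus D$ is $f$-invariant up to finite sets), and it is the join of components, by showing its complement $[D]_{\Fin}$ contains no component. Indeed $D$ is the union of the finite orbits of $f$; by Lemma~\ref{lem:FIX3} every component has an infinite representative of a very restricted form (an infinite orbit or an infinite half-orbit), and such a set meets $D$ in the empty set and meets $\N\setminus D$ in an infinite set, so no component lies below $[D]_{\Fin}$. Combined with the general Boolean-algebra fact that if $\FIX_{\alpha_f}$ is \emph{atomic} then $1$ is the join of its atoms — and $\FIX_{\alpha_f}$ restricted below $[\N\setminus D]_{\Fin}$ \emph{is} atomic because each of its nonzero elements lies below some component by Lemmas~\ref{lem:FIX2}, \ref{lem:FIX3} — this gives that $[\N\setminus D]_{\Fin}$ is the supremum of the components. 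The one genuine obstacle, and the step I would spend the most care on, is verifying that $\FIX_{\alpha_f}\cap (\pnmf\restriction [\N\setminus D]_{\Fin})$ is atomic with atoms the components: i.e.\ every nonzero $[A]_{\Fin}\le[\N\setminus D]_{\Fin}$ fixed by $\alpha_f$ lies below some component. This is essentially the last-paragraph argument in the proof of Lemma~\ref{lem:FIX3} — if $A\subseteq\N\setminus D$ meets every orbit finitely, split $A=A_0\sqcup A_1$ respecting orbits; both halves are fixed if $A$ is, so $[A]_{\Fin}$ is not an atom of $\FIX_{\alpha_f}$ unless it is below a single orbit, hence below a component — so I would invoke that argument rather than repeat it, and conclude.
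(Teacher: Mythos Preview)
Your instinct that the supremum is subtle is well-founded: when $f$ has infinitely many $\Z$-like orbits, the supremum of the components does \emph{not} exist in $\pnmf$ itself (remove one point from each $\Z$-like orbit of $\N\setminus D$ to obtain a strictly smaller upper bound for all components). The statement should be read as the supremum within $\FIX_{\alpha_f}$, which is all the applications (e.g.\ Lemma~\ref{lem:Rotary}) require. The paper's own proof is a single sentence (``This follows immediately from the previous lemma''), so your atomicity reformulation is already more careful than the source.

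That said, your verification of atomicity has a gap. You want every nonzero fixed $[A]_{\Fin}\le[\N\setminus D]_{\Fin}$ to have a component \emph{below} it; in the case where $A$ meets every infinite orbit in a finite set, you invoke the final paragraph of the proof of Lemma~\ref{lem:FIX3} to conclude that $[A]_{\Fin}$ is not an atom. But ``not an atom'' is not what is needed---splitting $[A]_{\Fin}$ into smaller fixed pieces indefinitely never produces an atom below it, and there is no well-foundedness to appeal to. The correct argument is that this case cannot occur at all: if $A\subseteq\N\setminus D$ meets infinitely many infinite orbits, each in a nonempty finite set $F=A\cap O$, then $f[F]\ne F$ (no finite nonempty subset of an $\N$-like, reverse $\N$-like, or $\Z$-like orbit is $f$-invariant), so $f[F]\setminus F$ contributes a point of $f[A]\setminus A$ lying in $O$; hence $f[A]\setminus A$ is infinite and $[A]_{\Fin}\notin\FIX_{\alpha_f}$. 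With this correction your atomicity argument goes through, and then the general fact you cite (in an atomic Boolean algebra the top element is the supremum of the atoms) finishes the proof.
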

\begin{proof} 
This follows immediately from the previous lemma.
\end{proof}

\begin{theorem}\label{thm:Fredholm}
If two trivial automorphisms of $\pnmf$ have different  index parity, then they are not conjugate. 
\end{theorem}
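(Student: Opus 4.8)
The plan is to establish the contrapositive by tracking how a conjugacy must interact with the components analysed in Lemmas~\ref{lem:FIX}--\ref{lem:DefiningTheRotary}. So suppose $\gamma$ is a conjugacy from $\alpha:=\alpha_f$ to $\beta:=\alpha_g$; I want to conclude $\parity(f)=\parity(g)$. The first move is a reduction to the case where $f$ and $g$ have no finite orbits: by Lemma~\ref{lem:FIX}, $\gamma$ carries the components of $\alpha$ bijectively onto those of $\beta$, and being an order-isomorphism it preserves every supremum that exists in $\pnmf$, so it sends the supremum of all components of $\alpha$ to the supremum of all components of $\beta$. By Lemma~\ref{lem:DefiningTheRotary} these suprema are $[\N\setminus D_f]_{\Fin}$ and $[\N\setminus D_g]_{\Fin}$, where $D_f,D_g$ are the domains of the rotary parts. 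Restricting $\gamma$ to the principal ideals below these elements gives a conjugacy between the relativised systems, which by Lemma~\ref{lem:OrbitStructure} are conjugate to the automorphisms induced by $\mathrm{S}(f)\oplus\mathrm{Z}(f)$ and $\mathrm{S}(g)\oplus\mathrm{Z}(g)$, and these have the same index parities as $f$ and $g$. So I may assume $f=\mathrm{S}(f)\oplus\mathrm{Z}(f)$ and $g=\mathrm{S}(g)\oplus\mathrm{Z}(g)$, in the standard form of Lemma~\ref{lem:OrbitStructure}: no finite orbits, and all one-sided orbits pointing the same way.

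If $f$ has only finitely many $\Z$-like orbits, the argument is short. By Lemma~\ref{lem:FIX3} the number of components of $\alpha$ is $N_+(f)+N_-(f)+2\ell(f)$, where $N_+,N_-,\ell$ count the $\N$-like, reverse $\N$-like, and $\Z$-like orbits; this is finite, and since $\gamma$ biject components, $\beta$ has the same finite number of components, so $N_+(f)+N_-(f)+2\ell(f)=N_+(g)+N_-(g)+2\ell(g)$. Because $\Index(f)$ depends only on $\mathrm{S}(f)$, which accounts for $\pm\!\big(N_+(f)-N_-(f)\big)$, we have $N_+(f)+N_-(f)\equiv\Index(f)\pmod 2$, and likewise for $g$; hence $\parity(f)=\parity(g)$. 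This is the case called trivial in the introduction.

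The remaining case---infinitely many $\Z$-like orbits on each side---is the heart of the matter: both automorphisms now have $\aleph_0$ components, so counting them yields nothing, and one cannot simply transport the pairing of the two halves of a $\Z$-like orbit, since single components are indistinguishable (it is consistent that $\sigma\cong\sigma^{-1}$, so a component that is a full one-sided orbit and one that is half of a two-sided orbit have conjugate relativisations). The route I would take is a further reduction. Let $w\in\FIX_{\alpha}$ be the supremum of the components arising from the orbits of $\mathrm{S}(f)$, so that below $w$ the automorphism $\alpha$ has exactly $|\Index(f)|$ components, while above $w$ it has index $0$, no finite orbits, and still infinitely many $\Z$-like orbits. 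Put $v:=\gamma(w)\in\FIX_{\beta}$ and let $V\subseteq\N$ be a representative; then $\beta$ below $v$ is (the automorphism induced by) $g\rs V$ and has $|\Index(f)|$ components, so by the finite case just proved $\parity(g\rs V)=|\Index(f)|\bmod 2=\parity(f)$. Moreover $\beta$ above $v$ is induced by $g\rs(\N\setminus V)$ and is conjugate to $\alpha$ above $w$, hence by Lemma~\ref{lem:OrbitStructure} to $\alpha_{s_{\Z\times\Z}}$. As $\Index$ is additive over the decomposition $\N=V\sqcup(\N\setminus V)$ into $\alpha_g$-almost-invariant pieces, $\Index(g)=\Index(g\rs V)+\Index(g\rs(\N\setminus V))$, and therefore
\[
\parity(g)\;=\;\parity(g\rs V)+\parity\big(g\rs(\N\setminus V)\big)\;=\;\parity(f)+\parity\big(g\rs(\N\setminus V)\big)\pmod 2 .
\]
So the whole theorem reduces to one statement: \emph{if $\alpha_k$ is conjugate to $\alpha_{s_{\Z\times\Z}}$, then $\parity(k)=0$}.

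This last statement is where I expect the real difficulty, and it is the step I would work hardest on. The obstacle is that all the obvious conjugacy invariants---the cardinality of the set of components, the conjugacy classes of the relativisations to single components---are blind to parity in this regime, and iterating the reduction above does not decrease any complexity parameter. A genuinely new invariant appears to be needed. I would look for one manufactured from the Boolean algebra $\FIX_{\alpha_k}$ together with the action of $\alpha_k$ on it, engineered to detect that $s_{\Z\times\Z}$ is induced by an honest permutation of $\N$ whereas an odd-index $k$ is not, in the hope that this discrepancy leaves a $\Z/2$-valued residue surviving nontrivial conjugacies.
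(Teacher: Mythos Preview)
Your reduction is sound and your diagnosis of where the difficulty lies is accurate, but the proof is incomplete: you explicitly leave open the one statement that carries all the content, namely that no almost permutation $k$ of odd index can have $\alpha_k$ conjugate to $\alpha_{s_{\Z\times\Z}}$. Everything before that point is bookkeeping (correct bookkeeping, but bookkeeping), and your final paragraph is a description of what a proof would need rather than a proof.

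The paper supplies exactly the invariant you say you are looking for. Call an automorphism $\alpha$ \emph{even} if there is a partition $\mathcal P$ of the set of components of $\alpha$ into pairs such that for \emph{every} $A\subseteq\mathcal P$ there is a single $\alpha$-invariant element $d_A\in\pnmf$ lying above each component in $\bigcup A$ and disjoint from each component not in $\bigcup A$. This is manifestly conjugacy-invariant. For $s_{\Z\times\Z}$ the natural pairing (the two halves $[O_f^-(n)]_{\Fin},[O_f^+(n)]_{\Fin}$ of each $\Z$-orbit) works: given any $A$, take $d_A$ to be the class of the union of the full $\Z$-orbits whose halves appear in $A$. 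For odd parity the invariant fails: any pairing $\mathcal P$ must mismatch infinitely many natural partners (an odd number of components have no partner at all, so pairing cannot be eventually natural), and a Ramsey argument then extracts an infinite $A\subseteq\mathcal P$ in which no two components are natural partners; for this $A$, any candidate $d_A$ must cut infinitely many $\Z$-orbits nontrivially and hence cannot be $\alpha$-invariant (this is Lemma~\ref{lem:FIX2}). Note that the paper runs this argument directly, without your preliminary reduction; the invariant already handles the finite-component case trivially, since an odd finite set admits no pairing at all.
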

\begin{proof}
Consider the following property of an automorphism $\alpha$ of $\pnmf$: 
\begin{itemize}
\item[$(*)$] 
There is a partition $\P$ of the components of $\alpha$ into pairs, such that for any $A \sub \P$ (any set of these pairs), there is some $d_A \in \pnmf$ such that
\begin{itemize}
\item[$\circ$] If $c$ is a component of $\alpha$ and $c \in \bigcup A$, then $c \leq d_A$.
\item[$\circ$] If $c$ is a component of $\alpha$ and $c \notin \bigcup A$, then $c \wedge d_A = [\emptyset]_\mathrm{Fin}$.
\item[$\circ$] $\alpha(d_A) = d_A$. 
\end{itemize}
\end{itemize}

This property $(*)$ is conjugacy-invariant. 
One can see this simply by noting that $(*)$ describes a property of the algebraic dynamical system $\< \pnmf,\alpha \>$ that does not depend on the names of members of $\pnmf$; as conjugacy is the natural notion of isomorphism in the category of dynamical systems, any such property is conjugacy-invariant. 
For a more hands-on way of seeing that $(*)$ is conjugacy-invariant, we noted already in Lemma~\ref{lem:FIX} that a conjugacy mapping $\gamma$ sends the components of $\alpha$ to the components of a conjugate map $\beta = \phi \circ \alpha \circ \phi^{-1}$, in the sense that $c$ is a component of $\alpha$ if and only if $\gamma(c)$ is a component of $\beta$. 
Using this fact, it is not difficult to check that if $\P$ is a partition witnessing $(*)$ for $\alpha$, then $\set{\{\gamma(c),\gamma(d)\}}{\{c,d\} \in \P}$ is a partition witnessing $(*)$ for $\gamma \circ \alpha \circ \gamma^{-1}$. 

The plan of this proof is to show that property $(*)$ detects the  index parity of a trivial map, in the sense that if $f$ is an almost permutation of $\N$, then $\alpha_f$ satisfies~$(*)$ if and only if $\parity(f) = 0$. Because $(*)$ is conjugacy-invariant, this suffices to prove the theorem.


First let us show that if $\parity(f) = 0$ then $\alpha_f$ satisfies $(*)$. 
Let $f$ be an almost permutation of $\N$ with $\parity(f) = 0$. 
Recall that $\alpha_f$ is trivially conjugate to $\alpha_{f'}$ for any almost permutation $f'$ of $\N$ that is equivalent to $f$. 
Because $(*)$ is conjugacy-invariant, this means that replacing $f$ with an equivalent almost permutation does not change whether it satisfies $(*)$. 
Thus, applying Lemma~\ref{lem:OrbitStructure}, we may (and do) assume $f = \mathrm{R}(f) \oplus \mathrm{Z}(f) \oplus \mathrm{S}(f)$. 
By the remarks near the end of Section~\ref{sec:definitions}, the index of $f$ depends only on $\mathrm S(f)$. 
In particular, because $\parity(f) = 0$, either $\mathrm S(f) = s_{k\N}$ or $\mathrm S(f) = s_{k\N}^{-1}$ for some even $k \geq 0$. 

Suppose first that $f$ has only finitely many $\Z$-like orbits. 
By Lemma~\ref{lem:FIX3}, $\alpha_f$ has two components for every $\Z$-like orbit of $f$ and one for every $\N$-like or reverse $\N$-like orbit. 
In particular, $\alpha_f$ has a finite, even number of components. 
In this case, any partition $\P$ of these components into pairs witnesses that $\alpha_f$ satisfies $(*)$, because if $A \sub \P$ then $d_A = \bigvee \big( \bigcup A \big)$ has all the required properties. 

Suppose next that $f$ has infinitely many $\Z$-like orbits. 
Let $\P$ denote a partition of the components of $\alpha_f$ such that
\begin{itemize}
\item[$\circ$] if $O$ is an $\N$-like or reverse $\N$-like orbit of $f$, then there is another $\N$-like or reverse $\N$-like orbit $O'$ such that $\{ [O]_\mathrm{Fin},[O']_\mathrm{Fin} \} \in \P$; and 
\item[$\circ$] for each $\Z$-like orbit $O_f(n)$, $\{ [O^-_f(n)]_\mathrm{Fin},[O^+_f(n)]_\mathrm{Fin}, \} \in \P$.
\end{itemize}
In other words, the components of $\alpha_f$ that come from $\Z$-like orbits are paired up in the natural way, and the components coming from $\N$-like or reverse $\N$-like orbits are paired with each other arbitrarily. This covers all components of $\alpha_f$ by Lemma~\ref{lem:FIX3}. 
To see that $\P$ witnesses $(*)$, fix $A \sub \P$. 
For each pair $p \in A$ either $(1)$ there are two $\N$-like or reverse $\N$-like orbits from $f$ giving rise to the two components in $p$, or $(2)$ the two components of $p$ are formed from the two ends of a single $\Z$-like orbit. 
Let $D_A$ denote the union of all these orbits: i.e., 
$$\textstyle D_A \,=\, \bigcup \set{O_f(n)}{[O_f(n)]_{\Fin} \in \bigcup A \text{ or } [O^-_f(n)]_{\Fin}, [O^+_f(n)]_{\Fin} \in \bigcup A }.$$ 
Using Lemmas~\ref{lem:FIX3} and the assumption that $f = \mathrm{R}(f) \oplus \mathrm{Z}(f) \oplus \mathrm{S}(f)$, it is not difficult to see that $d_A = [D_A]_{\Fin}$ has the three properties required in $(*)$. 

Hence, whether $f$ has finitely many or infinitely many $\Z$-like orbits, if $\parity(f) = 0$ then $\alpha_f$ satisfies $(*)$. 

Next, let us show that if $\parity(f) = 1$ then $\alpha_f$ does not satisfy $(*)$. 
Let $f$ be an almost permutation of $\N$ with $\parity(f) = 1$. 
As before, we may (and do) assume $f = \mathrm{R}(f) \oplus \mathrm{Z}(f) \oplus \mathrm{S}(f)$. 
Because $\parity(f) = 1$, either $\mathrm S(f) = s_{k\N}$ or $\mathrm S(f) = s_{k\N}^{-1}$ for some odd $k > 0$. 

By Lemma~\ref{lem:FIX3}, $\alpha_f$ has two components for every $\Z$-like orbit of $f$ and one for every $\N$-like or reverse $\N$-like orbit. 
But $f$ has an odd number of $\N$-like or reverse $\N$-like orbits, so if $f$ has only finitely many $\Z$-like orbits, then $\alpha_f$ has a finite, odd number of components. 
In particular, the components of $\alpha_f$ cannot be partitioned into pairs, which means that $(*)$ fails for trivial reasons. 

Suppose instead that $f$ has infinitely many $\Z$-like orbits. 
Because $\parity(f) = 1$, the number of $\N$-like or reverse $\N$-like orbits of $f$ is odd. 
Thus $\alpha_f$ has $\aleph_0$ components coming from the $\Z$-like orbits of $f$, and it has a finite odd number of components coming from the $\N$-like and reverse $\N$-like orbits of $f$. 

Each $\Z$-like orbit $O_f(n)$ gives rise to two components of $\alpha_f$, $[O^-_f(n)]_{\Fin}$ and $[O^+_f(n)]_{\Fin}$. Let us say that these two components are \emph{natural partners}. Furthermore, let us say that any component of $\alpha_f$ arising from $\N$-like or reverse $\N$-like orbit does not have a natural partner.

Suppose $\P$ is a partition of the components of $\alpha_f$ into pairs. 
Because the number of $\N$-like and reverse $\N$-like orbits of $f$ is odd, the number of components without a natural partner is odd. 
This implies that there is an infinite $A_0 \sub \P$ such that each member of $A_0$ has a natural partner, but is not paired with its natural partner by $\P$. 
(To see this, note that if $\P$ were to pair all but finitely many components with their natural partners, then there would be a finite, odd number of components left over, and so no way to pair them off.) 
If we color $[A_0]^2$ by setting $\chi(p,p') = 1$ if some member of $p$ is the natural partner of some member of $p'$, and $\chi(p,p') = 0$ otherwise, then clearly there is no infinite $1$-homogeneous subset of $A_0$. 
Thus, by Ramsey's theorem, this coloring has an infinite $0$-homogeneous set $A \sub A_0$. 
That is, there is an infinite $A \sub A_0$ such that each member of $\bigcup A$ has a natural partner, but no two members of $\bigcup A$ are natural partners of each other. 

Concretely, there is an infinite set $\set{n_k}{k \in \w} \sub \N$ such that the $O_f(n_k)$ are all distinct $\Z$-like orbits of $f$ (hence disjoint from each other), and such that $\bigcup A$ contains exactly one of $[O^-_f(n_k)]_{\Fin}$ or $[O^+_f(n_k)]_{\Fin}$ for each $k$.

Fix $d_A \in \pnmf$, and fix $D \sub \w$ with $[D]_{\Fin} = d_A$. 
Assume that $d_A$ satisfies the first two of the three properties listed in the statement of $(*)$.  
We will show that $d_A$ cannot also satisfy the third property. 

Our assumption that $d_A$ satisfies the first two properties in $(*)$ means that for any component $c$ of $\alpha_f$, either $c \in \bigcup A$ and $c \leq d_A$, or else $c \notin \bigcup A$ and $c \wedge d_A = [\emptyset]_\mathrm{Fin}$. 
Given our description of the components of $\alpha_f$ above, this means that for each $k \in \w$, either $D$ contains only finitely many members $O^-_f(n_k)$ and all but finitely many members of or $O^+_f(n_k)$, or else $D$ contains only finitely many members $O^+_f(n_k)$ and all but finitely many members of or $O^-_f(n_k)$. 
This implies that, for each $k$, there is some $i_k \in \Z$ such that either $f^{i_k}(n_k) \in D$ and $f^{i_k+1}(n_k) \notin D$, or else $f^{i_k}(n_k) \notin D$ and $f^{i_k+1}(n_k) \in D$. 

If $f^{i_k}(n_k) \in D$ and $f^{i_k+1}(n_k) \notin D$ for infinitely many $k$, then letting $K$ denote the set of all such $k$, we have 
$$[\set{f^{i_k+1}(n_k)}{k \in K}]_{\Fin} \,=\, \alpha_f([\set{f^{i_k}(n_k)}{k \in K}]_{\Fin}) \,\leq\, \alpha_f([D]_{\Fin}) \,=\, \alpha_f(d_A),$$
while at the same time 
\begin{align*}
[\set{f^{i_k+1}(n_k)}{k \in K}]_{\Fin} \wedge d_A &\,=\, [\set{f^{i_k+1}(n_k)}{k \in K}]_{\Fin} \wedge [D]_{\Fin} \\
 &\,=\, [\set{f^{i_k+1}(n_k)}{k \in K} \cap D]_{\Fin} \,=\, [\emptyset]_{\mathrm{Fin}}.
\end{align*}
Because $K$ is infinite, $[\set{f^{i_k+1}(n_k)}{k \in K}]_{\Fin} \neq [\emptyset]_\mathrm{Fin}$, and therefore $\alpha_f(d_A) \neq d_A$. 
If on the other hand $f^{i_k}(n_k) \notin D$ and $f^{i_k+1}(n_k) \in D$ for infinitely many $k$, then letting $K$ denote the set of all such $k$, we have 
$$[\set{f^{i_k}(n_k)}{k \in K}]_{\Fin} = \alpha_f^{-1}([\set{f^{i_k+1}(n_k)}{k \in K}]_{\Fin}) \leq \alpha_f^{-1}([D]_{\Fin}) = \alpha_f^{-1}(d_A),$$
while at the same time 
\begin{align*}
[\set{f^{i_k}(n_k)}{k \in K}]_{\Fin} \wedge d_A &\,=\, [\set{f^{i_k}(n_k)}{k \in K}]_{\Fin} \wedge [D]_{\Fin} \\
 &\,=\, [\set{f^{i_k}(n_k)}{k \in K} \cap D]_{\Fin} \,=\, [\emptyset]_\mathrm{Fin}. 
\end{align*}
As before, $[\set{f^{i_k}(n_k)}{k \in K}]_{\Fin} \neq [\emptyset]_\mathrm{Fin}$, so this implies $\alpha_f^{-1}(d_A) \neq d_A$. 
In either case, $\alpha_f(d_A) \neq d_A$. 
This shows that any $d_A \in \pnmf$ satisfying the first two bullet points listed in statement $(*)$ fails to satisfy the third. In particular, no $d_A$ satisfies all three at the same time, and therefore $(*)$ fails for $\alpha_f$. 
\end{proof}

Recall from the previous section that $\alpha \mapsto \Index(\alpha)$ is a group homomorphism $\TrAut(\pnmf) \to \Z$. It follows, of course, that $\alpha \mapsto \parity(\alpha)$ is a group homomorphism $\TrAut(\pnmf) \to \Z/2\Z$.

Now observe that the property $(*)$ from the preceding proof is a property of automorphisms, not merely of trivial automorphisms or of almost permutations. 
In particular, one may wonder whether $(*)$ defines a coherent notion of ``even'' and ``odd'' automorphisms generally (not just for trivial automorphisms). In other words, does $(*)$ define a group homomorphism $\Aut(\pnmf) \to \Z/2\Z$?

The answer is a resounding no. Not only does $(*)$ fail to do this, but in fact \ch implies there is no homomorphism from $\Aut(\pnmf)$ to $\Z/2\Z$, or to any other group other than the trivial group or $\Aut(\pnmf)$ itself. 
In other words, \ch implies $\Aut(\pnmf)$ is simple. Building on results of Anderson \cite{Anderson}, this was proved independently by Fuchino \cite{Fuchino} and Rubin \cite{RubinStepanek}. 

Theorem~\ref{thm:Fredholm} constitutes part of the proof that \eqref{1.T.main} $ \Rightarrow$ \eqref{2.T.main} in Theorem~\ref{T.main} from the introduction. The rest of the proof of Theorem~\ref{T.main}, which relies on \ch via model-theoretic methods, is proved in the following section.

\section{Saturation}
\label{S.Saturation}

By the \emph{language of dynamical systems} we mean the formal logical language that extends the language of Boolean algebras by adding a function symbol representing an automorphism of a Boolean algebra. 
A model for this language is a dynamical system $\< \mathbb A,\alpha \>$, where $\mathbb A$ is a Boolean algebra and $\alpha: \mathbb A \to \mathbb A$ an automorphism. Things like $+$ or $\leq$ are understood as part of the language of the Boolean algebra~$\mathbb A$, and not listed explicitly.

\begin{definition} 
If $\alpha$ is an automorphism of $\pnmf$, let 
\[
\fA_\alpha \,=\, \< \pnmf,\alpha \>. 
\]
let $\mathrm{Th}(\fA_\alpha)$ denote the first-order theory of this structure in the language of dynamical systems, and let $\mathrm{Th}_\exists(\fA_\alpha)$ denote its existential theory. 
For convenience, 
if~$f$ is an almost permutation of $\N$, we write $\fA_f$ rather than $\fA_{\alpha_f}$. 
Similarly, if $\alpha_{\bar n}$ is a rotary automorphism, we write $\fA_{\bar n}$ rather than $\fA_{\alpha_{\bar n}}$.
\hfill\Coffeecup
\end{definition}
As usual, $\mathrm{Th}(\fA_f) = \mathrm{Th}(\fA_g)$ can also be expressed by saying $\fA_f$ and $\fA_g$ are \emph{elementarily equivalent}, denoted $\fA_f \equiv \fA_g$. 
Furthermore, observe that the following important fact follows immediately from the definitions:

\begin{proposition}\label{prop:Ref}
Given $\phi,\psi \in \Aut(\pnmf)$, the structures $\fA_\phi$ and $\fA_\psi$ are isomorphic if and only if the maps $\phi$ and $\psi$ are conjugate.
\end{proposition}

The main result of this section, Theorem~\ref{thm:ExtensionToTriv} below, states: 
assuming \ch, if $\parity(f) = \parity(g)$ and $\fA_f \equiv \fA_g$ then $\fA_f \cong \fA_g$. 
A special case of this is the main result of  \cite{brian2024does}, which states that \ch implies $\fA_\s \cong \fA_{\s^{-1}}$. 
We do not claim to give an alternative proof of the results in \cite{brian2024does}; rather, our theorem extends the main theorem of \cite{brian2024does}, and its proof relies directly on \cite{brian2024does}. The proof of Theorem~\ref{thm:ExtensionToTriv} uses \cite{brian2024does} as a black box, so that the reader does not need to be familiar with the details in \cite{brian2024does} in order to understand the proof.


The proof of Theorem~\ref{thm:ExtensionToTriv} relies on a saturation argument. Since some of our readership may not care about $\aleph_1$ and since we will need $\kappa$-saturated models for $\kappa>\aleph_1$  only once, we adopt the following variation on the standard terminology (already standard in the theory of \cstar-algebras). 

\begin{definition} We say that a first-order structure $\fA$ is \emph{countably saturated} if every consistent type over a countable subset of $\fA$ is realized in $\fA$.
\end{definition} 

We  show that rotary automorphisms correspond to countably saturated structures (Lemma~\ref{L.ReducedProduct}). To complement this argument, we also classify precisely which trivial maps give rise to countably saturated structures: 
the structure $\fA_f$ is countably saturated if and only if $\mathrm{Z}(f) \neq s_{\Z \times \Z}$  (the permutation with infinitely many bi-infinite orbits, Definition~\ref{Def.sZZ})
This is Theorem~\ref{thm:Saturation} below, whose proof also relies on \cite{brian2024does}. Unlike  the proof of Theorem~\ref{thm:ExtensionToTriv}, the proof of Theorem~\ref{thm:Saturation} requires the reader to be familiar with some of the details of \cite{brian2024does}.

We begin by showing that if $\alpha_{\bar n}$ is a (trivial) rotary automorphism of $\pnmf$, then the structure $\fA_{\bar n}$ is countably saturated. 
For the statement of the following lemma, let $\P(n)$ denote the power set of $n = \{0,1,2,\dots,n-1\}$, which is a finite Boolean algebra of size $2^n$, and let $r_n$ denote the automorphism induced on $\P(n)$ by the function $i \mapsto i+1 \ \, (\text{mod } n)$. 

\begin{lemma}\label{L.ReducedProduct}
	For every sequence $\bar n = \seq{n_j}{n \in \w}$ of natural numbers,
	\[
	\fA_{\bar n} \,\cong\, \textstyle \Big( \prod_{j \in \w} \< \P(n_j),r_{n_j} \> \Big) /\Fin. 
	\]
	Consequently, $\fA_{\bar n}$ is countably saturated. 
\end{lemma}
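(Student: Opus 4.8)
The plan is to prove the isomorphism $\fA_{\bar n} \cong \big( \prod_{j \in \w} \langle \P(n_j), r_{n_j}\rangle\big)/\Fin$ directly by exhibiting the natural map, and then to derive countable saturation from the well-known fact that reduced products of finite structures over the Fr\'echet filter are countably saturated.

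First I would set up the isomorphism. Recall the intervals $I_j$ from Definition~\ref{def:RotaryMap}, so that $\N = \bigsqcup_{j\in\w} I_j$ with $|I_j| = n_j$, and $r_{\bar n}$ acts on each $I_j$ as a single $n_j$-cycle. Fixing for each $j$ the order-isomorphism $I_j \cong n_j = \{0,\dots,n_j-1\}$, the map $A \mapsto (A \cap I_j)_{j\in\w}$ induces a Boolean algebra isomorphism $\P(\N) \cong \prod_{j\in\w}\P(n_j)$, and under this identification $r_{\bar n}[A]$ corresponds to $(r_{n_j}(A\cap I_j))_{j\in\w}$ — but one must be a little careful here, since $r_{n_j}$ is an \emph{automorphism of the Boolean algebra} $\P(n_j)$ rather than the underlying cyclic permutation; the two agree because a bijection of a finite set induces the obvious permutation of its power set. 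The key point is that $A =^* B$ if and only if $A\cap I_j = B\cap I_j$ for all but finitely many $j$ (using that each $I_j$ is finite and the $I_j$ partition $\N$), so quotienting $\P(\N)$ by $\Fin$ corresponds exactly to quotienting $\prod_j \P(n_j)$ by the ideal of sequences that are eventually $\emptyset$ — i.e.\ the reduced product modulo $\Fin$. Since $r_{\bar n}$ commutes with this quotient on both sides, the isomorphism of Boolean algebras upgrades to an isomorphism of the dynamical systems $\fA_{\bar n} \cong \big(\prod_j \langle \P(n_j), r_{n_j}\rangle\big)/\Fin$, where the automorphism on the right is the coordinatewise action of the $r_{n_j}$ on the reduced product.

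For the ``consequently'' clause I would invoke the standard model-theoretic fact that a reduced product $\prod_j \calM_j / \Fin$ of a sequence of structures (in a fixed countable language) over the Fr\'echet filter is countably saturated; this is classical (essentially \L o\'s-type arguments combined with a diagonalization, and it appears in this exact form for reduced products over the Fr\'echet ideal — see e.g.\ the literature on countable saturation of coronas and reduced products, which the paper cites around Lemma~\ref{L.ReducedProduct} and in \cite{farah2022corona}). The structures $\langle \P(n_j), r_{n_j}\rangle$ are finite, hence trivially models of the language of dynamical systems, so the hypothesis is met and $\big(\prod_j \langle \P(n_j),r_{n_j}\rangle\big)/\Fin$ is countably saturated; transporting along the isomorphism just established, $\fA_{\bar n}$ is countably saturated as well.

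I expect the main obstacle to be purely bookkeeping rather than conceptual: namely, checking carefully that the $=^*$ relation on subsets of $\N$ translates precisely to the ``eventually equal'' relation on the product $\prod_j \P(n_j)$, which hinges on the fact that each $I_j$ is finite (so ``differs in finitely many coordinates of the product, each coordinate a finite alteration'' coincides with ``differs in finitely many elements of $\N$''), and that the automorphism $r_{\bar n}$ genuinely restricts to $r_{n_j}$ on each block in the sense needed (this is where one must not conflate the set-permutation with the induced Boolean automorphism, though they determine each other). Once that identification is in hand, both the isomorphism and the saturation conclusion follow with no further work. If one wanted the proof to be self-contained regarding saturation, one could instead cite the specific reference used elsewhere in the paper for countable saturation of reduced products over $\Fin$ and remark that the finiteness of the factors makes all the relevant types over countable parameter sets automatically ``small enough,'' but I would lean on the black-box citation to keep the argument short.
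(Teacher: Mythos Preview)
Your proposal is correct and follows essentially the same approach as the paper: both use the natural map $A \mapsto (A \cap I_j)_{j\in\w}$ to identify $\pnmf$ with the reduced product, check that the $=^*$ relation corresponds to eventual coordinatewise equality (using finiteness of the $I_j$), and then invoke the standard result that reduced products over $\Fin$ are countably saturated. The paper is slightly terser and cites specific references (J\'onsson--Olin, \cite[Theorem~16.5.1]{Fa:STCstar}, and Palmgren) for the saturation fact, but the mathematical content is identical.
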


\begin{proof}
As in Definition~\ref{def:RotaryMap}, 
let $I_0,I_1,I_2,\dots$ be the sequence of adjacent intervals with $|I_j| = n_j$ for all $j$, and let $r_{\bar n}$ denote the map 
$$r_{\bar n}(i) \,=\, \begin{cases}
i+1 &\text{ if } i \neq \max I_k \text{ for any $k$,} \\
\min I_k &\text{ if } i = \max I_k.
\end{cases}$$
For each $j$, this map induces a function $A \mapsto r_{\bar n}[A]$ on $\P(I_{n_j})$; let us denote this function $R_j$. Clearly $\< \P(I_j),R_j \> \cong \< \P(n),r_n \>$. 

Define a function $f \colon \P(\N) \to \prod_{j \in \w} \P(I_j)$ by setting 
$$f(X) \,=\, \seq{ X \cap I_j }{j \in \w}.$$
If $[X]_\mathrm{Fin} = [Y]_\mathrm{Fin}$ then $X \cap I_j = Y \cap I_j$ for all but finitely many $j$. 
Thus $f$ lifts to an isomorphism 
mapping $\< \pnmf,\alpha_{\bar n} \>$ to $\prod_j \< \P(I_j),R_j \> /\Fin$, namely the function $[X]_\mathrm{Fin} \,\mapsto\, [f(X)]_\mathrm{Fin}$. 
The first assertion of the lemma follows. 

Every reduced product over $\Fin$ is countably saturated by \cite{JonOl:Almost} (a proof of this result in a more general context can be found in \cite[Theorem 16.5.1]{Fa:STCstar},
and an elegant direct proof can be found in \cite[Theorem~4]{palmgren}), so the second assertion of the lemma follows from the first. 
\end{proof}

\begin{lemma}\label{lem:simC}
Let $\alpha_{\bar m}$ and $\alpha_{\bar n}$ be rotary automorphisms of $\pnmf$. 
Assuming \ch,  $\alpha_{\bar m}$ and $\alpha_{\bar n}$ are conjugate if and only if $\fA_{\bar m} \equiv \fA_{\bar n}$. 
\end{lemma}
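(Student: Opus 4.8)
The forward direction ($\alpha_{\bar m}$ and $\alpha_{\bar n}$ conjugate $\Rightarrow$ $\fA_{\bar m} \equiv \fA_{\bar n}$) requires no hypothesis: by Proposition~\ref{prop:Ref}, conjugacy of the automorphisms means $\fA_{\bar m} \cong \fA_{\bar n}$, and isomorphic structures are elementarily equivalent. So the content is the converse, and this is where \ch{} enters. First I would invoke Lemma~\ref{L.ReducedProduct}: both $\fA_{\bar m}$ and $\fA_{\bar n}$ are countably saturated structures in the (countable) language of dynamical systems. Assuming \ch{}, both structures have cardinality $2^{\aleph_0} = \aleph_1$. The plan is then to apply the standard model-theoretic fact that two countably saturated (equivalently, $\aleph_1$-saturated) structures of cardinality $\aleph_1$ in a countable language that are elementarily equivalent must be isomorphic --- this is a back-and-forth argument of length $\omega_1$, building the isomorphism as an increasing union of finite partial elementary maps, using saturation at each step to extend (and, by symmetry, to ensure surjectivity), with \ch{} guaranteeing that the domain and range can each be exhausted in $\omega_1$ steps. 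From $\fA_{\bar m} \equiv \fA_{\bar n}$ we thus get $\fA_{\bar m} \cong \fA_{\bar n}$, and by Proposition~\ref{prop:Ref} again this isomorphism witnesses that $\alpha_{\bar m}$ and $\alpha_{\bar n}$ are conjugate.

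The one genuine subtlety to check is that the back-and-forth really can be carried out: at a successor stage we have a finite partial elementary map $p$ and a new element $a$ on one side, and we need $b$ on the other side so that $p \cup \{(a,b)\}$ is still partial elementary. This is exactly realizing the type of $a$ over $\dom(p)$ in the target structure --- a consistent type (consistency coming from elementary equivalence together with $p$ being partial elementary) over a \emph{finite}, hence countable, parameter set, which is realized by countable saturation. The length of the construction being $\omega_1$ rather than $\omega$ is why \ch{} is needed: a countably saturated structure of size $\aleph_1$ can be enumerated in order type $\omega_1$, and at limit stages of countable cofinality the union of the partial maps built so far is again finite-to-one... more precisely, one takes unions of the chains of partial elementary maps, which remain elementary by an elementary-chain argument (each is finite at successor stages but the union over a countable limit ordinal may be countably infinite, and a countable increasing union of partial elementary maps is still partial elementary). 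I would phrase this as: \emph{any two elementarily equivalent, countably saturated models of a countable theory, each of cardinality $\leq \aleph_1$, are isomorphic}, and cite a standard reference (e.g.\ \cite[Theorem 16.5.2 or similar]{Fa:STCstar}, or a standard model theory text).

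The main obstacle is essentially bookkeeping rather than mathematical depth: one must be careful that ``countably saturated'' as defined here (every consistent type over a countable set is realized) is exactly what the back-and-forth needs, and that the cardinality bound $\aleph_1$ (from \ch) matches the number of steps. There is no issue with the language --- it is the finite language of Boolean algebras augmented by one unary function symbol, hence countable --- and no issue with consistency of the types, which follows formally from $p$ being partial elementary and the two structures being elementarily equivalent. I would therefore keep this proof short: cite Lemma~\ref{L.ReducedProduct} for saturation, note \ch{} gives the cardinality, quote the standard uniqueness-of-saturated-models theorem, and close the loop with Proposition~\ref{prop:Ref}.
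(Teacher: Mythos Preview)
Your proposal is correct and follows essentially the same approach as the paper: the forward direction is immediate from Proposition~\ref{prop:Ref}, and for the converse you combine Lemma~\ref{L.ReducedProduct} (countable saturation), \ch{} (cardinality $\aleph_1$), and the standard uniqueness of $\aleph_1$-saturated models of a given theory in size $\aleph_1$. The paper cites \cite[Theorem 5.1.13]{ChaKe} for that last fact rather than sketching the back-and-forth, but otherwise the argument is identical.
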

\begin{proof}
Recall (Proposition~\ref{prop:Ref}) $\fA_{\bar m}$ and $\fA_{\bar n}$ are isomorphic if and only if $\alpha_{\bar m}$ and $\alpha_{\bar n}$ are conjugate. 
With this in mind, the forward implication of the lemma is trivial: isomorphic structures have the same theory. 
For the reverse implication, assume \ch, which means $|\pnmf| = \aleph_1$. 
Any two elementarily equivalent countably saturated structures of size $\aleph_1$ are isomorphic (see, e.g., \cite[Theorem 5.1.13]{ChaKe},  keeping in mind that our `countably saturated' translates as `$\aleph_1$-saturated'). 
Thus, applying Lemma~\ref{L.ReducedProduct}, if $\fA_{\bar m} \equiv \fA_{\bar n}$ then $\fA_{\bar m} \cong \fA_{\bar n}$, i.e. $\alpha_{\bar m}$ and $\alpha_{\bar n}$ are conjugate. 
\end{proof}

Proposition~\ref{P.FVtrick} below is based on \cite[Lemma 5.2]{ghasemi2016reduced}. 	An analogous fact is true for metric structures, with an essentially identical proof.

\begin{proposition}[Ghasemi's Trick] \label{P.FVtrick} Suppose that $A_n$, for $n\in \bbN$, is a sequence of structures in the same countable language $\calL$. There is a sequence $n(k)$, for $k\in \bbN$, such that for every further subsequence $n(k(i))$, for $i\in \bbN$, the reduced products $\prod_k A_{n(k)}/\Fin$ and $\prod_i A_{n(k(i))}/\Fin$ are elementarily equivalent and therefore isomorphic, provided that \ch{} holds and each structure has cardinality $\leq\! 2^{\aleph_0}$. 
\end{proposition}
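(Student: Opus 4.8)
The plan is to reduce Ghasemi's Trick to a counting argument over the (countably many) first-order sentences in the language $\calL$, combined with the fact from Lemma~\ref{L.ReducedProduct}'s proof (via \cite{JonOl:Almost}, \cite{Fa:STCstar}, \cite{palmgren}) that every reduced product over $\Fin$ is countably saturated, together with the model-theoretic fact (\cite[Theorem~5.1.13]{ChaKe}) that two elementarily equivalent countably saturated structures of size $\aleph_1$ are isomorphic. So the real content is to extract a single subsequence $n(k)$ with the property that \emph{all} of its further subsequences give elementarily equivalent reduced products; isomorphism is then automatic under \ch{} once we observe that each such reduced product has cardinality at most $2^{\aleph_0}=\aleph_1$ (which follows from the hypothesis that each $A_n$ has cardinality $\leq 2^{\aleph_0}$, since a reduced product of countably many such structures has cardinality at most $(2^{\aleph_0})^{\aleph_0}=2^{\aleph_0}$) and is countably saturated.

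First I would recall the key combinatorial principle underlying the Feferman--Vaught style analysis: for a reduced product $\prod_n A_n/\Fin$ over the Fr\'echet filter, the truth value of any fixed first-order sentence $\varphi$ depends only on which of finitely many ``local'' conditions hold on a cofinite set of coordinates; more precisely, by the Feferman--Vaught theorem there is, for each sentence $\varphi$, a finite Boolean combination of statements of the form ``$\{n : A_n \models \psi_i\}$ is cofinite'' that is equivalent, in $\prod_n A_n/\Fin$, to $\varphi$. Consequently, whether $\prod_{k} A_{n(k)}/\Fin \models \varphi$ depends only on the ``$\Fin$-type'' of the sequence $\langle A_{n(k)} : k\in\bbN\rangle$, namely on which sets $\{k : A_{n(k)}\models\psi\}$ are cofinite (equivalently, on the set of $\psi$ true on a cofinite set of the chosen coordinates). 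Passing to a subsequence $n(k(i))$ can only shrink coordinate sets, so a set that is cofinite in the $k$-indices need not stay cofinite in the $i$-indices; the goal is to choose $n(k)$ so that this pathology never occurs for the relevant sentences.

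The main step, then, is a diagonalization. Enumerate the sentences of $\calL$ as $\varphi_0,\varphi_1,\dots$ (countably many, since $\calL$ is countable); for each sentence $\varphi_j$ let $\Psi_j$ be the finite set of ``component sentences'' $\psi$ appearing in its Feferman--Vaught reduction, and let $\Psi = \bigcup_j \Psi_j$, a countable set. For each $\psi\in\Psi$ consider the set $S_\psi = \{n : A_n \models \psi\}\subseteq\bbN$. I would build $n(0)<n(1)<\cdots$ recursively so that for every $\psi\in\Psi$, either $\{k : n(k)\in S_\psi\}$ is finite or it is \emph{all} of $\bbN$ (i.e., cofinite in the strongest sense): at stage $k$ we have committed to $n(0),\dots,n(k-1)$ and a decreasing sequence of infinite ``candidate'' sets; using a standard bookkeeping, for the $k$-th sentence $\psi$ in an enumeration of $\Psi$ we split the current candidate set according to whether $\psi$ holds, pass to whichever piece is infinite, and thereby guarantee that from some point on all chosen $n(k)$ agree on the truth of $\psi$. (This is exactly the construction of a sequence converging to an ultrafilter-like point, or equivalently choosing the $A_{n(k)}$ so that the Fr\'echet filter already decides every $\psi\in\Psi$.) After the construction, for every $\psi\in\Psi$ the set $\{k : n(k)\in S_\psi\}$ is either finite or cofinite, and crucially this property is \emph{inherited by every infinite subsequence}: if the set of $k$'s is finite it stays finite, and if it is cofinite then its intersection with any infinite subset of the $k$-indices is again cofinite in that subset. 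Hence for every further subsequence $n(k(i))$ and every sentence $\varphi_j$, the Feferman--Vaught reduction evaluates the same way, so $\prod_k A_{n(k)}/\Fin \equiv \prod_i A_{n(k(i))}/\Fin$.

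Finally I would assemble the conclusion: fix any further subsequence $n(k(i))$. Both $\prod_k A_{n(k)}/\Fin$ and $\prod_i A_{n(k(i))}/\Fin$ are reduced products over $\Fin$, hence countably saturated by the cited results; they have cardinality at most $2^{\aleph_0} = \aleph_1$ under \ch{}; and by the previous paragraph they are elementarily equivalent. By \cite[Theorem~5.1.13]{ChaKe} (two elementarily equivalent countably saturated, i.e.\ $\aleph_1$-saturated, structures of size $\aleph_1$ are isomorphic), they are isomorphic. I expect the main obstacle to be purely expository: one must state the Feferman--Vaught reduction for reduced products over $\Fin$ in a clean enough form that the ``finite-or-cofinite, preserved under subsequences'' argument goes through transparently, and one must be slightly careful that the diagonalization is over the countably many component sentences $\Psi$ (not over all subsequences, of which there are uncountably many) — the point is that deciding every $\psi\in\Psi$ on a cofinite set is a countable list of requirements, and once met it automatically handles \emph{all} subsequences at once.
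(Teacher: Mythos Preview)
Your proposal is correct and follows essentially the same route as the paper: diagonalize over the countably many $\calL$-sentences so that each is eventually decided along the chosen subsequence, observe via Feferman--Vaught that this decision is inherited by every further subsequence, and finish with countable saturation of reduced products over $\Fin$ together with \ch. The only cosmetic difference is that the paper diagonalizes directly over all $\calL$-sentences rather than first extracting the Feferman--Vaught component sets $\Psi_j$ (harmless, since every $\psi\in\Psi$ is itself an $\calL$-sentence), and states the Feferman--Vaught reduction as a Boolean-algebra formula $\Theta$ evaluated in $\pnmf$ rather than your ``Boolean combination of cofinite statements,'' but once each $Z(\psi_i)$ is $0$ or $1$ these formulations coincide.
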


\begin{proof} 
Since the language of dynamical systems is countable, there are only countably many $\calL$-sentences.  Enumerate these sentences as $\varphi_j$, for $j\in \bbN$. 
Using this fact, a standard diagonalization argument shows that there is an infinite set $D = \{ n(k) \mid k \in \bbN \}$ such that for every~$j$, $\varphi_j^{A_{n(k)}}$ is either true for all but finitely many $k$ or it is false for all but finitely many $k$. (This is essentially identical to the usual argument that the splitting number $\mathfrak s$ is uncountable.)

To see that this subsequence is as required, choose a further subsequence $n(k(i))$ of this sequence, and fix an $\calL$-sentence $\varphi$. 
By the Feferman--Vaught theorem (\cite{feferman1959first}, also \cite[Proposition~6.3.2]{ChaKe}) there are a finite set $F_\varphi=\{\psi_i\mid i<m\}$ of $\calL$-sentences and a formula $\Theta(Z_0,\dots, Z_{m-1})$ in the language of Boolean algebras such that,  with $Z(\psi)$ denoting the $\Fin$-equivalence class of the set $\{n \mid B_n\models \psi\}$,  we have that $\big( \prod_n B_n \big) / \Fin \models \varphi$ if and only if $\pnmf \models \Theta(Z(\psi_0), \dots, Z(\psi_{m-1}))$.  
		
		If $\psi$ is a Boolean combination of the $\psi_j$, for $j<m$, then it  is either true in all but finitely many of the $A_{n(k)}$ or it is false in all but finitely many of the $A_{n(k)}$, and the same is true for every further subsequence.  
		Therefore $\big( \prod_k A_{n(k)} \big)/\Fin\models\varphi$ if and only if $\big( \prod_i A_{n(k(i))} \big)/\Fin\models\varphi$. Since $\varphi$ was arbitrary, this proves that the reduced products are elementarily equivalent. 
		By the countable saturation of reduced products over $\Fin$ (already used in the proof of Lemma~\ref{L.ReducedProduct}), these structures are countably saturated. Since they are  also elementarily equivalent, as in the proof of Lemma~\ref{L.ReducedProduct}, \ch{}  implies they are isomorphic. 
	\end{proof}

\begin{lemma}\label{L.subsequence}
Suppose $\bar n$ is a sequence in $\N$, and $f_{\bar n}$ the corresponding trivial automorphism of $\N$. Then $d \in \pnmf$ is $\alpha_{\bar n}$-invariant if and only if $d=[D]_\mathrm{Fin}$ for some $D \sub \N$ that is a union of $f_{\bar n}$-orbits. 
Furthermore, if $d$ is a nonzero $\alpha_{\bar n}$-invariant set, then the restriction of $\alpha_{\bar n}$ to $d$ is conjugate to $\alpha_{\bar m}$ for some subsequence $\bar m$ of $\bar n$. 
\end{lemma}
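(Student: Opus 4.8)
The plan is to exploit the completely explicit description of the rotary permutation $r_{\bar n}$ from Definition~\ref{def:RotaryMap}: its orbits are exactly the consecutive intervals $I_k$ of length $n_k$, and on each $I_k$ it acts as a single cyclic permutation (an $n_k$-cycle). For the ``if'' direction of the first assertion, if $D$ is any union of the intervals $I_k$ then $r_{\bar n}[D]=D$ on the nose, so $[D]_{\Fin}$ is $\alpha_{\bar n}$-invariant; this is immediate.

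For the ``only if'' direction, suppose $[A]_{\Fin}$ is $\alpha_{\bar n}$-invariant, i.e.\ $r_{\bar n}[A]\triangle A$ is finite. Since the $I_k$ are pairwise disjoint, a finite subset of $\N$ meets only finitely many of them, so there is a cofinite set $S\subseteq\w$ of indices with $I_k\cap(r_{\bar n}[A]\triangle A)=\emptyset$ for $k\in S$; as $r_{\bar n}$ maps $I_k$ onto itself, for such $k$ we get $r_{\bar n}[A\cap I_k]=r_{\bar n}[A]\cap I_k=A\cap I_k$. But the only subsets of $I_k$ invariant under an $n_k$-cycle are $\emptyset$ and $I_k$, so $A\cap I_k\in\{\emptyset,I_k\}$ for every $k\in S$. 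Put $D=\bigcup\{I_k:k\in S,\ A\cap I_k=I_k\}$, a union of $r_{\bar n}$-orbits. Then $D\subseteq A$, and $A\setminus D\subseteq\bigcup\{I_k:k\notin S\}$ is a finite set, so $[D]_{\Fin}=[A]_{\Fin}$. This proves the first statement.

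For the second statement, let $d=[D]_{\Fin}$ be nonzero and $\alpha_{\bar n}$-invariant. By the first part we may take $D=\bigcup_{k\in S}I_k$ with $S\subseteq\w$ infinite; enumerate $S=\{k_0<k_1<\cdots\}$ and set $\bar m=\seq{n_{k_j}}{j\in\w}$, a subsequence of $\bar n$. The order-preserving bijection $\varphi\colon D\to\N$ carries the block $I_{k_j}$ order-isomorphically onto the $j$th interval in the partition of $\N$ determined by $\bar m$, and under $\varphi$ the $n_{k_j}$-cycle on $I_{k_j}$ becomes the $n_{k_j}$-cycle on that interval; hence $\varphi\circ(r_{\bar n}\rs D)=r_{\bar m}\circ\varphi$. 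Therefore the induced isomorphism $\pnmf\rs d\cong\P(D)/\Fin\to\pnmf$ conjugates the restriction of $\alpha_{\bar n}$ to $d$ onto $\alpha_{\bar m}$, as required.

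The argument is essentially bookkeeping, so there is no serious obstacle; the only points needing a little care are (i) that the exceptional indices form a \emph{finite} set, which is what permits correcting $A$ to a union of orbits by a finite modification, and (ii) that $r_{\bar n}\rs D$ really is conjugate to a genuine rotary permutation on $\N$ indexed by the subsequence $\bar m$, which holds precisely because $r_{\bar n}$ preserves each block $I_k$ and hence $D$ decomposes as a disjoint union of those blocks in order.
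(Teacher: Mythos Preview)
Your proof is correct and follows essentially the same route as the paper's: both work directly with the orbit intervals $I_k$, observe that a union of orbits is fixed setwise, and for the converse pass from an invariant class to a representative that is a union of orbits, then read off the subsequence $\bar m$ from the indices of the blocks contained in $D$. Your treatment of the ``only if'' direction---showing that for cofinitely many $k$ the set $A\cap I_k$ must be $\emptyset$ or $I_k$ because a full cycle has no proper nonempty invariant subsets---is in fact more explicit than the paper's own argument at that step.
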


\begin{proof}
If $D \sub \N$ is a union of $f_{\bar n}$-orbits, then $f_{\bar n}[D] = D$, and therefore $\alpha_{\bar n}([D]_\mathrm{Fin}) = [f_{\bar n}[D]]_{\mathrm{Fin}} = [D]_{\mathrm{Fin}}$, i.e., $[D]_{\mathrm{Fin}}$ is $\alpha_{\bar n}$-invariant. 
Conversely, suppose $[D]_\mathrm{Fin}$ is not $\alpha_{\bar n}$-invariant. Then either $\alpha_{\bar n}([D]_\mathrm{Fin}) - [D]_\mathrm{Fin} \neq [\emptyset]$ or $[D]_\mathrm{Fin} - \alpha_{\bar n}([D]_\mathrm{Fin}) \neq [\emptyset]$. In the former case, 
$$\alpha_{\bar n}([D]_\mathrm{Fin}) - [D]_\mathrm{Fin} \,=\, [f_{\bar n}[D]]_\mathrm{Fin} - [D]_\mathrm{Fin} \,=\, [f_{\bar n}[D] \setminus D]_\mathrm{Fin} \,\neq\, [\emptyset],$$
which means $f_{\bar n}[D] \setminus D$ is infinite. In particular $D$ is not $f_{\bar n}$-invariant, and therefore is not a union of $f_{\bar n}$-orbits. Similarly, in the latter case $[D]_\mathrm{Fin} - \alpha_{\bar n}([D]_\mathrm{Fin}) \neq [\emptyset]$ implies $D \setminus f_{\bar n}[D]$ is infinite, which means $D$ is not $f_{\bar n}$-invariant, and therefore is not a union of $f_{\bar n}$-orbits. 

For the ``furthermore'' part of the lemma, fix a nonzero $d \in \pnmf$ such that $\alpha_{\bar n}(d) = d$. 
By the previous paragraph, there is some $D \sub \N$ with $d = [D]_\mathrm{Fin}$ such that $D$ is a union of $f_{\bar n}$-orbits. Letting $\bar m$ denote the subsequence of $\bar n$ consisting of those orbits contained in $D$, it is clear that $f_{\bar m}$ is equivalent to $f_{\bar n} \restriction D$, which implies that $\alpha_{\bar m}$ is conjugate to $\alpha_{\bar n} \restriction d$.
\end{proof}

The following consequence of Proposition~\ref{P.FVtrick} will be used in \S\ref{S.uRc}.

\begin{corollary}\label{C.many.nonconjugate} There is a family $\mathcal G$ of $2^{\aleph_0}$ trivial automorphisms of $\pnmf$ with the following properties: 
	\begin{enumerate}
\item \label{1.many.conjugate} Assuming \ch{}: if $\gamma,\gamma’ \in \mathcal G$, the restriction of  ${\gamma}$ to any nonzero ${\gamma}$-invariant set is conjugate to the restriction of ${\gamma'}$ to any nonzero ${\gamma'}$-invariant set. 
\item  \label{2.many.conjugate} Assuming all automorphisms of $\pnmf$ are trivial (e.g., assuming $\mathsf{OCA}_{\mathrm{T}}$,  by \cite[Theorem~1]{de2023trivial}): if $\gamma,\gamma' \in \mathcal G$ and $\gamma \neq \gamma'$, then the restriction of ${\gamma}$ to any nonzero ${\gamma}$-invariant set is not conjugate to the restriction of ${\gamma'}$ to any nonzero ${\gamma'}$-invariant set. 
	\end{enumerate}
\end{corollary}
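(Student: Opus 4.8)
The plan is to construct $\mathcal G$ using Ghasemi's trick (Proposition~\ref{P.FVtrick}) applied to a carefully chosen sequence of finite dynamical systems, combined with Lemma~\ref{L.ReducedProduct}, which identifies each rotary automorphism $\fA_{\bar n}$ with a reduced product $\big(\prod_{j}\langle\P(n_j),r_{n_j}\rangle\big)/\Fin$. First I would fix a sequence $\langle N_j : j\in\w\rangle$ of natural numbers that is \emph{rigid} in the relevant sense --- for instance, a sequence growing fast enough that no $N_j$ is a multiple of any $N_i$ with $i<j$, or more simply a sequence of distinct primes --- so that the finite structures $\langle\P(N_j),r_{N_j}\rangle$ are pairwise non-isomorphic in a robust way. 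Applying Ghasemi's trick to the sequence $A_j = \langle\P(N_j),r_{N_j}\rangle$, I obtain a thinned-out subsequence $\langle N_{n(k)} : k\in\w\rangle$ with the property that \emph{every} further subsequence yields an elementarily equivalent (hence, under \ch, isomorphic) reduced product. Now let $\mathcal G$ be indexed by the family of infinite subsets of $\{n(k):k\in\w\}$; modulo the usual trick of thinning a family of $2^{\aleph_0}$ almost-disjoint infinite sets so that any two have infinite intersection (or simply taking an almost disjoint family inside a fixed infinite set and noting we only need the conclusion up to the reduced product), associate to each such subset $S$ the rotary automorphism $\gamma_S = \alpha_{\bar n_S}$ where $\bar n_S$ enumerates $\{N_j : n(j)\in S\}$ (with multiplicities/padding by finite cycles as needed). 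Each $\gamma_S$ is a trivial automorphism, and there are $2^{\aleph_0}$ of them.

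For part~\eqref{1.many.conjugate}, assume \ch. Given $\gamma_S,\gamma_{S'}\in\mathcal G$ and nonzero invariant sets $d,d'$, Lemma~\ref{L.subsequence} tells us that $\gamma_S\restriction d$ is conjugate to some $\alpha_{\bar m}$ for a subsequence $\bar m$ of $\bar n_S$, and likewise $\gamma_{S'}\restriction d'$ is conjugate to some $\alpha_{\bar m'}$ for a subsequence $\bar m'$ of $\bar n_{S'}$. The key point is that both $\bar m$ and $\bar m'$ are (up to reindexing) further subsequences of the Ghasemi-thinned master sequence, so by the conclusion of Proposition~\ref{P.FVtrick} their reduced products --- equivalently, via Lemma~\ref{L.ReducedProduct}, the structures $\fA_{\bar m}$ and $\fA_{\bar m'}$ --- are elementarily equivalent, and both are countably saturated of size $\aleph_1$, hence isomorphic. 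By Proposition~\ref{prop:Ref} this isomorphism of structures is exactly a conjugacy of the automorphisms, so $\gamma_S\restriction d$ and $\gamma_{S'}\restriction d'$ are conjugate. (One small wrinkle: a subsequence $\bar m$ of $\bar n_S$ need not literally be a subsequence of the master sequence if $S$ and the master sequence were reindexed; I would set things up so that every infinite subset $S$ is itself an infinite subset of $\{n(k):k\}$, so subsequences of $\bar n_S$ are automatically subsequences of the master list, and the finite cycle padding is harmless since it does not affect the reduced product modulo $\Fin$.)

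For part~\eqref{2.many.conjugate}, assume every automorphism of $\pnmf$ is trivial (which holds under $\OCAT$ by \cite[Theorem~1]{de2023trivial}). Then a conjugacy between $\gamma_S\restriction d$ and $\gamma_{S'}\restriction d'$ would be a \emph{trivial} conjugacy, i.e., induced by an almost bijection; by Lemma~\ref{lem:TriviallyConjugate} and Lemma~\ref{lem:OrbitStructure}, trivial conjugacy of rotary automorphisms means the multisets of finite-orbit-lengths agree modulo finitely many, i.e., $\mathrm R(\gamma_S\restriction d)\sim\mathrm R(\gamma_{S'}\restriction d')$. So I need to arrange that for $S\neq S'$ and any nonzero invariant $d,d'$, the multisets $\{N_j : n(j)\in S, \text{orbit used in }d\}$ and $\{N_j : n(j)\in S', \text{orbit used in }d'\}$ cannot agree mod finite. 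This is where the rigidity of the $N_j$ enters decisively: if the $N_j$ are distinct and I use a family of $2^{\aleph_0}$ almost disjoint infinite subsets of $\{n(k):k\}$ as the index set for $\mathcal G$, then for $S\neq S'$ almost disjoint, $S$ contains infinitely many indices not in $S'$, so $\bar n_S$ contains infinitely many values $N_j$ not appearing in $\bar n_{S'}$ at all (since the $N_j$ are distinct, each value pins down its index); passing to a nonzero invariant subset $d$ of $\gamma_S$ can only delete orbits, and since $d$ is nonzero it retains infinitely many orbits, among which infinitely many carry values absent from $\bar n_{S'}$ entirely --- hence absent from any invariant restriction of $\gamma_{S'}$. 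Thus the two finite-orbit multisets differ infinitely, ruling out trivial conjugacy.

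The main obstacle I anticipate is the bookkeeping in part~\eqref{2.many.conjugate}: I must ensure that \emph{no} nonzero invariant restriction of $\gamma_S$ can shed enough orbits to match \emph{some} nonzero invariant restriction of $\gamma_{S'}$. The cleanest fix is to choose the $N_j$ so that each value occurs in $\bar n_S$ with infinite multiplicity for each $j\in S$ --- i.e., let $\gamma_S$ have, for each $j\in S$, infinitely many finite cycles of length $N_j$ --- and take the $N_j$ pairwise distinct; then an invariant restriction $d$ of $\gamma_S$ either drops all cycles of length $N_j$ or keeps infinitely many, and since $S$ is almost disjoint from $S'$, some length $N_j$ with $j\in S\setminus S'$ survives in $d$ with infinite multiplicity while being entirely absent from $\gamma_{S'}$ and all its restrictions. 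I also need to double-check that adding infinite multiplicities of each $N_j$ does not disturb the Ghasemi argument for part~\eqref{1.many.conjugate} --- but it does not, because the conclusion of Proposition~\ref{P.FVtrick} is about elementary equivalence of reduced products over arbitrary further subsequences, and the diagonalization there is insensitive to how the sentences truth-values are distributed, only that each is eventually constant.
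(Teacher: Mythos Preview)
Your approach is essentially the paper's: apply Ghasemi's trick to a sequence of distinct cycle lengths, index the family $\mathcal G$ by an almost disjoint family of infinite subsets of the thinned index set, invoke Lemma~\ref{L.subsequence} to reduce restrictions to subsequences, and use Lemma~\ref{L.ReducedProduct} plus \ch{} for part~\eqref{1.many.conjugate} and almost disjointness plus Lemma~\ref{lem:TriviallyConjugate} for part~\eqref{2.many.conjugate}. The paper simply takes $N_j = j$, but your choice of distinct primes works just as well.

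Your ``main obstacle'' paragraph, however, is a phantom. With each $N_j$ used \emph{once} in $\bar n_S$ and the $N_j$ distinct, a nonzero invariant $d$ corresponds to an infinite subset $T\subseteq S$; since $S\cap S'$ is finite, $T\setminus S'$ is infinite, so the orbit-length multiset of $\gamma_S\restriction d$ contains infinitely many values $N_j$ absent from $\bar n_{S'}$ entirely. That is already the full argument, and it is exactly what the paper does. Your proposed fix --- giving each $N_j$ infinite multiplicity in $\bar n_S$ --- is not only unnecessary but actually breaks part~\eqref{1.many.conjugate}: a sequence in which each value repeats infinitely is \emph{not} a subsequence of the Ghasemi-thinned master sequence, so Proposition~\ref{P.FVtrick} no longer applies directly. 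Your heuristic that ``eventual constancy is insensitive to multiplicities'' is false in general (repeating a single early term infinitely often can flip the eventual truth value), and while one could salvage it with a more careful argument, there is no need. Drop the last paragraph and your proof is complete.
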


\begin{proof} 
Let $\bar n = \seq{n_j}{j \in \bbN}$ denote any increasing sequence in $\N$ (e.g., take $n_j = j$ for concreteness). 
For each $j$, let $A_j = \< \mathcal P(n_j),r_{n_j} \>$, where, just as in Lemma~\ref{L.ReducedProduct}, $r_{n_j}$ denotes the automorphism induced on $\mathcal P(n_j)$ by the map $i \mapsto i+1$ (mod $n_j$). 
Each $A_j$ is a finite structure in the language of dynamical systems. 

Applying Proposition~\ref{P.FVtrick}, there is a subsequence $\bar m = \seq{m_j}{j \in \w}$ of $\bar n$ such that if $\seq{m_{k(i)}}{i \in \N}$ and $\seq{m_{k'(i)}}{i \in \N}$ are any two subsequences of $\bar m$ (i.e., $k,k'$ are increasing functions $\N \to \N$), then the reduced products $\big( \prod_{i \in \N}A_{m_{k(i)}} \big)/\mathrm{Fin}$ and $\big( \prod_{i \in \N}A_{m_{k'(i)}} \big)/\mathrm{Fin}$ are elementarily equivalent. 
Via Lemma~\ref{L.ReducedProduct}, this means that if $\bar a$ and $\bar b$ are any subsequences of $\bar m$, then $\fA_{\bar a} \equiv \fA_{\bar b}$.

Let $\mathcal A$ be a family of infinite subsets of $\N$ such that any two members of $\mathcal A$ are almost disjoint (i.e., disjoint modulo $\Fin$) and such that $|\mathcal A| = 2^{\aleph_0}$ (see, e.g., \cite[Theorem~9.2.2]{Fa:STCstar}). 
Enumerating each of the members of $\mathcal A$ in increasing order, we obtain a family $\mathcal F$ of $2^{\aleph_0}$ strictly increasing functions $\N \to \N$, such that the ranges of any two of these functions have only finite overlap. 
For each $f \in \mathcal F$, let $\bar m_f = \seq{m(f(i))}{i \in \N}$ denote the corresponding subsequence of $\bar m$. 

Let $\mathcal G = \{ \alpha_{\bar m_f} \mid f \in \mathcal F \}$. 
Fix $f,f' \in \mathcal F$, let $d$ be a nonzero $\alpha_{\bar m_f}$-invariant set, and let $d'$ be a nonzero $\alpha_{\bar m_{f'}}$-invariant set. 
By Lemma~\ref{L.subsequence}, there is a subsequence $\bar k$ of $\bar m_f$ such that $\alpha_{\bar k}$ is conjugate to the restriction of $\alpha_{\bar m_f}$ to $d$, and there is a subsequence $\bar k'$ of $\bar m_{f'}$ such that $\alpha_{\bar k'}$ is conjugate to the restriction of $\alpha_{\bar m_{f'}}$ to $d'$. 

To prove~\eqref{1.many.conjugate}, observe that $\bar k$ and $\bar k'$ are both subsequences of $\bar m$, and therefore $\fA_{\bar k} \equiv \fA_{\bar k'}$. 
Furthermore, these two structures are countably saturated by Lemma~\ref{L.ReducedProduct}. 
Consequently, as in the proof of Lemma~\ref{lem:simC}, \ch implies that  $\fA_{\bar k}$ and $\fA_{\bar k'}$ are isomorphic, which means $\alpha_{\bar k}$ and $\alpha_{\bar k'}$ are conjugate by Proposition~\ref{prop:Ref}. 
Thus \ch implies the restriction of $\alpha_{\bar m_f}$ to $d$ is conjugate to the restriction of $\alpha_{\bar m_{f'}}$ to $d'$.

To prove~\eqref{2.many.conjugate}, suppose $f$ and $f'$ are distinct members of $\cF$.  
By our choice of the family $\mathcal F$, there are only finitely many numbers appearing in both the sequences $\bar k$ and $\bar k'$. 
However, it is clear in general that if two rotary permutations of $\N$ are equivalent (conjugate via an almost permutation of $\N$), then they have the same cycle structure modulo $\mathrm{Fin}$ (i.e., if $r_{\bar \ell} \sim r_{\bar \ell'}$ then the image of $\bar \ell$ is equal to the image of $\bar \ell'$ modulo finite). 
Hence $r_{\bar k}$ and $r_{\bar k'}$ are not equivalent. 
By Lemma~\ref{lem:TriviallyConjugate}, this means $\alpha_{\bar k}$ and $\alpha_{\bar k'}$ are not trivially conjugate. Assuming all automorphisms of $\pnmf$ are trivial, this means $\alpha_{\bar k}$ and $\alpha_{\bar k'}$ are not conjugate, and thus the restriction of $\alpha_{\bar m_f}$ to $d$ is not conjugate to the restriction of $\alpha_{\bar m_{f'}}$ to $d'$.
\end{proof} 

The following uses the notation introduced in Definition~\ref{def:OrbitStructure}  and Lemma~\ref{lem:OrbitStructure}. 

\begin{lemma}\label{lem:Rotary}
Suppose that $f$ and $g$ are almost permutations of $\N$ and suppose that $f = \mathrm R(f) \oplus \mathrm S(f) \oplus \mathrm Z(f)$ and $g = \mathrm R(g) \oplus \mathrm S(g) \oplus \mathrm Z(g)$. 
Assuming \ch, if $\fA_f \equiv \fA_g$ then $\alpha_{\mathrm R(f)}$ and $\alpha_{\mathrm R(g)}$ are conjugate. 
\end{lemma}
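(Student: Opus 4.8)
The plan is to show that $\alpha_{\mathrm R(f)}$ can be recovered, up to conjugacy, from $\fA_f$ by a first-order-describable construction, so that elementary equivalence of $\fA_f$ and $\fA_g$ forces $\fA_{\mathrm R(f)} \equiv \fA_{\mathrm R(g)}$; since rotary automorphisms give countably saturated structures (Lemma~\ref{L.ReducedProduct}), \ch{} and Lemma~\ref{lem:simC} then finish the job. The key observation is Lemma~\ref{lem:DefiningTheRotary}: when $f = \mathrm R(f)\oplus \mathrm S(f)\oplus \mathrm Z(f)$, the element $[\N\setminus D]_{\Fin}$, where $D=\dom(\mathrm R(f))$, is the supremum of all components of $\alpha_f$, and hence $d_f := [D]_{\Fin}$ is its complement. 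The restriction of $\alpha_f$ to the relative algebra below $d_f$ is precisely $\alpha_{\mathrm R(f)}$.

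First I would make precise the sense in which ``the supremum of the components'' is definable. The set of components is not first-order definable as a set, but membership in the principal ideal generated by $d_f$ \emph{is} expressible by a single formula: an element $x$ satisfies $x \le d_f$ if and only if $x$ is a countable join of components, equivalently (using Lemmas~\ref{lem:FIX2} and~\ref{lem:FIX3}) if and only if every nonzero $y\le x$ has a nonzero $\alpha_f$-invariant subelement $z$ with $z\le x$ and such that $z$ is ``atomic for $\FIX_{\alpha_f}$ below $x$'' in a suitable sense. I would check that one can write a formula $\varphi(x)$, in the language of dynamical systems, true of exactly the elements $\le d_f$; the content is that ``$y$ lies below a component'' is first-order (a component is an atom of $\FIX_\alpha$, and atomicity of a given element of a definable subalgebra is expressible), and ``$x$ is the join of all such $y$'' can be phrased as: every such $y$ satisfies $y\le x$, and $x$ is the least such element, or alternatively that the complement of $x$ has no nonzero subelement lying below a component. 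Either way $\varphi$ pins down $d_f$ uniquely.

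Next, I would pass to the relativized structure. Having a formula $\varphi(x)$ defining $d_f$, the structure $\< \pnmf \rs d_f, \alpha_f\rs d_f\>$ is interpretable in $\fA_f$ without parameters (the domain is $\{x : x\le d_f\}$, which is $\varphi(\pnmf)$ up to the definable bijection $x\mapsto x\wedge d_f$, and the Boolean operations and $\alpha_f$ restrict). Therefore $\fA_f \equiv \fA_g$ implies that these relativized structures are elementarily equivalent. By the discussion after Definition~\ref{def:FunctionSums} and Lemma~\ref{lem:DefiningTheRotary}, $\< \pnmf\rs d_f, \alpha_f \rs d_f\> \cong \fA_{\mathrm R(f)}$ and similarly for $g$, so $\fA_{\mathrm R(f)} \equiv \fA_{\mathrm R(g)}$. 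Finally, $\mathrm R(f)$ and $\mathrm R(g)$ are rotary automorphisms (or both the empty map, in which case the conclusion is trivial), so Lemma~\ref{lem:simC} applies: under \ch{}, $\fA_{\mathrm R(f)}\equiv \fA_{\mathrm R(g)}$ yields that $\alpha_{\mathrm R(f)}$ and $\alpha_{\mathrm R(g)}$ are conjugate.

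The main obstacle I anticipate is the definability step: producing an honest first-order formula $\varphi(x)$ that isolates $d_f$ uniformly across all almost permutations $f$. One must verify that ``$y$ is below a component of $\alpha$'' is first-order --- this amounts to saying $\FIX_\alpha$ (definable by $\alpha(y)=y$) contains a nonzero element $z\le$ (something comparable to $y$) that is an atom of $\FIX_\alpha$, and atomicity ``$\forall w\,(0<w<z \to \alpha(w)\neq w)$'' is visibly first-order --- and then that the join of all such $y$ exists and is captured by a formula; here the delicate point is that this join need not be definable as a term, so I would instead characterize its complement $[\N\setminus D]_{\Fin}$ directly as the largest element all of whose nonzero subelements split some $\FIX_\alpha$-atom, i.e. meet no component in an infinite set, which by Lemmas~\ref{lem:FIX2}–\ref{lem:FIX3} is exactly $[\N\setminus D]_{\Fin}$. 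Once that formula is in hand, the interpretability and the appeal to Lemma~\ref{lem:simC} are routine.
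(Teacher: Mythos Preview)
Your overall strategy matches the paper's exactly: use Lemma~\ref{lem:DefiningTheRotary} to define the rotary domain inside $\fA_f$, relativize to it, transfer elementary equivalence to $\fA_{\mathrm R(f)} \equiv \fA_{\mathrm R(g)}$, and finish with Lemma~\ref{lem:simC}. The paper's own proof is three sentences and simply asserts the definability; your instinct to be more careful here is sound.

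That said, your execution of the definability step is tangled. In the second paragraph you write ``$x\leq d_f$ if and only if $x$ is a countable join of components,'' but $d_f=[D]_{\Fin}$ is the rotary domain and is \emph{disjoint} from every component---the components live below $[\N\setminus D]_{\Fin}$, not below $d_f$. In the last paragraph your proposed description of $[\N\setminus D]_{\Fin}$ as ``the largest element all of whose nonzero subelements meet no component'' works for neither $d_f$ nor its complement: when $\mathrm Z(f)=s_{\Z\times\Z}$, an infinite set selecting one point from each $\Z$-like orbit is disjoint from every component yet lies entirely in the non-rotary part, so the ideal of elements disjoint from all components is not principal. The clean fix is to define $[\N\setminus D]_{\Fin}$ directly as the supremum of the components: it is the unique $c$ such that every component is $\leq c$ and every upper bound $c'$ of the components satisfies $c\leq c'$. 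Both clauses are first-order (``$z$ is a component'' reads $\alpha(z)=z$, $z\neq 0$, and $\forall w\,(0<w<z\to \alpha(w)\neq w)$), and Lemma~\ref{lem:DefiningTheRotary} guarantees this $c$ exists. Relativizing to the complement of $c$ then yields $\fA_{\mathrm R(f)}$ as you intended.
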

\begin{proof}
It follows from Lemma~\ref{lem:DefiningTheRotary} that $\alpha_{\mathrm R(f)}$ is definable in $\fA_f$, as the restriction of $\alpha_f$ to the complement of the supremum of the components of $\alpha_f$. 
But of course $\alpha_{\mathrm R(g)}$ is definable in $\fA_g$, with the same definition, so $\fA_f \equiv \fA_g$ implies $\fA_{\mathrm R(f)} \equiv \fA_{\mathrm R(g)}$. 
By Lemma~\ref{lem:simC}, \ch implies $\fA_{\mathrm R(f)} \cong \fA_{\mathrm R(g)}$, i.e., $\alpha_{\mathrm R(f)}$ and $\alpha_{\mathrm R(g)}$ are conjugate. 
\end{proof}

\begin{theorem}\label{thm:ExtensionToTriv}
Let $\alpha$ and $\beta$ be trivial automorphisms of $\pnmf$. Assuming \ch, $\alpha$ and $\beta$ are conjugate if and only if $\parity(\alpha) = \parity(\beta)$ and $\fA_\alpha \equiv \fA_\beta$. 
\end{theorem}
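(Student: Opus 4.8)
\textbf{Proof proposal for Theorem~\ref{thm:ExtensionToTriv}.}

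The forward direction is already in hand: if $\alpha$ and $\beta$ are conjugate, then $\fA_\alpha \cong \fA_\beta$ by Proposition~\ref{prop:Ref}, hence $\fA_\alpha \equiv \fA_\beta$, and $\parity(\alpha) = \parity(\beta)$ by Theorem~\ref{thm:Fredholm}. So the work is entirely in the converse: assuming \ch, $\parity(\alpha) = \parity(\beta)$, and $\fA_\alpha \equiv \fA_\beta$, build a conjugacy. The plan is to reduce, using the orbit decomposition of Lemma~\ref{lem:OrbitStructure}, to a sum of three ``pure'' pieces and conjugate each piece separately. Write $\alpha = \alpha_f$, $\beta = \alpha_g$, and by trivial conjugacy-invariance (Lemma~\ref{lem:TriviallyConjugate}) assume outright that $f = \mathrm R(f) \oplus \mathrm S(f) \oplus \mathrm Z(f)$ and $g = \mathrm R(g) \oplus \mathrm S(g) \oplus \mathrm Z(g)$.

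First I would match up the rotary parts: Lemma~\ref{lem:Rotary} gives directly that $\fA_\alpha \equiv \fA_\beta$ plus \ch implies $\alpha_{\mathrm R(f)}$ and $\alpha_{\mathrm R(g)}$ are conjugate. Next I would match the $\N$-like and $\Z$-like parts. Since $\parity(f) = \parity(g)$, the indices of $f$ and $g$ have the same parity, so $\mathrm S(f)$ and $\mathrm S(g)$ are $s_{k\N}^{\pm 1}$ for $k$ of the same parity (or both empty). The key observation is that $s_{\N}$ is conjugate to $s_{\N}^{-1}$ under \ch (this is the black box from \cite{brian2024does}), and more generally $s_{k\N}$, $s_{k\N}^{-1}$, and $s_{2\N} = s_\N \oplus s_\N$, are related by absorbing pairs of $\N$-like orbits into $\Z$-like orbits exactly as in the proof of Lemma~\ref{lem:OrbitStructure}: $s_\N \oplus s_\N^{-1}$ is equivalent to $s_\Z$. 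So modulo replacing $f$ and $g$ by equivalent maps, I can arrange that $\mathrm S(f)$ and $\mathrm S(g)$ are \emph{equal} — both empty, or both $s_\N$, or both $s_\N^{-1}$ — at the cost of shifting the excess $\N$-like orbits into the $\Z$-like part, where they become $\Z$-like orbits. After this normalization, what remains to be shown is that the $\Z$-like parts agree in the relevant sense: either both $f$ and $g$ have finitely many $\Z$-like orbits, in which case (having equalized parities and absorbed the extras) elementary equivalence must force the same finite number $\ell$, so $\mathrm Z(f) = \mathrm Z(g) = s_{\ell\Z}$; or at least one has infinitely many, and here I must use elementary equivalence to show the other does too, so that $\mathrm Z(f) = \mathrm Z(g) = s_{\Z\times\Z}$. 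The finiteness of the number of $\Z$-like orbits should be a first-order expressible property of $\fA_f$ (for instance via the number of components of $\alpha_f$, which Lemmas~\ref{lem:FIX3} and \ref{lem:DefiningTheRotary} tie to the orbit structure, together with an appropriate first-order statement about the invariant sets), so $\fA_f \equiv \fA_g$ transfers it.

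Once the three pieces are matched — $\alpha_{\mathrm R(f)}$ conjugate to $\alpha_{\mathrm R(g)}$, $\mathrm S(f) = \mathrm S(g)$, and $\mathrm Z(f) = \mathrm Z(g)$ — Lemma~\ref{lem:OrbitStructure} tells us $f$ and $g$ are both equivalent to $\mathrm R(f)\oplus \mathrm S(f)\oplus \mathrm Z(f)$ and $\mathrm R(g)\oplus\mathrm S(g)\oplus\mathrm Z(g)$ respectively; since the $\mathrm S$ and $\mathrm Z$ summands are literally the same and the $\mathrm R$ summands are conjugate, $\alpha_f$ and $\alpha_g$ are conjugate. (One needs here that conjugacy respects direct sums, which is immediate since a conjugacy of a summand extended by the identity — or by a fixed conjugacy — on the complementary invariant piece is a conjugacy of the whole; this uses that the $\mathrm R$, $\mathrm S$, $\mathrm Z$ parts live on invariant subsets of $\N$.) I expect the main obstacle to be the $\Z$-like bookkeeping in the middle paragraph: making precise that ``elementary equivalence sees whether there are finitely or infinitely many $\Z$-like orbits'' and that ``after equalizing parity one may absorb surplus $\N$-like orbits into $\Z$-like orbits in a conjugacy-respecting way,'' since this is where the parity hypothesis is genuinely used and where the delicate interaction between $\mathrm S$ and $\mathrm Z$ in Definition~\ref{def:OrbitStructure} has to be handled carefully rather than waved at. Everything else is either the cited black box \cite{brian2024does} or a routine gluing of conjugacies over a partition of $\N$ into invariant pieces.
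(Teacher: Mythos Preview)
Your proposal is correct and follows essentially the same route as the paper: normalize via Lemma~\ref{lem:OrbitStructure}, use the black box from \cite{brian2024does} to reduce $\mathrm S(f)$ and $\mathrm S(g)$ to either $s$ or the empty map (so that the parity hypothesis forces $\mathrm S(f)=\mathrm S(g)$), then count components to match the $\mathrm Z$-parts, and invoke Lemma~\ref{lem:Rotary} for the rotary parts. Two small points worth tightening: first, when you write ``modulo replacing $f$ and $g$ by \emph{equivalent} maps'' after invoking \cite{brian2024does}, you mean \emph{conjugate} maps---the replacements $s_\N \leadsto s_\N^{-1}$ and $s_\N \oplus s_\N \leadsto s_\Z$ are not trivial conjugacies (the paper flags this explicitly); second, the paper makes your hedged ``should be first-order expressible'' precise by observing that ``$\alpha$ has exactly $n$ components'' is a first-order sentence, so $\fA_f\equiv\fA_g$ forces the component counts to agree, and then Lemma~\ref{lem:FIX3} translates this into $\mathrm Z(f)=\mathrm Z(g)$ once $\mathrm S(f)=\mathrm S(g)$ is in hand.
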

\begin{proof}
The forward implication is a trivial consequence of Proposition~\ref{prop:Ref}: if $\alpha$ and $\beta$ are conjugate automorphisms of $\pnmf$, then $\parity(\alpha) = \parity(\beta)$ by Theorem~\ref{thm:Fredholm}; also, $\alpha$ and $\beta$ being conjugate is equivalent to $\fA_\alpha \cong \fA_\beta$, which implies $\fA_\alpha \equiv \fA_\beta$. 

For the reverse implication, let $f$ and $g$ be almost permutations of $\N$ with $\alpha = \alpha_f$ and $\beta = \alpha_g$, and suppose that $\parity(\alpha_f) = \parity(\alpha_g)$ and $\fA_{\alpha_f} \equiv \fA_{\alpha_g}$. 

By Lemma~\ref{lem:TriviallyConjugate}, we may replace $f$ or $g$ with any equivalent almost permutation without changing the conjugacy class of $\alpha_f$ or $\alpha_g$. 
Using this fact and Lemma~\ref{lem:OrbitStructure}, we may assume $f = \mathrm{R}(f) \oplus \mathrm{Z}(f) \oplus \mathrm{S}(f)$ and $g = \mathrm{R}(g) \oplus \mathrm{Z}(g) \oplus \mathrm{S}(g)$. 

The main theorem of \cite{brian2024does} states that \ch implies $\s$ and $\s^{-1}$ are conjugate. It follows that, without changing the conjugacy class of $\alpha_f$, we may replace an $\N$-like orbit of $f$ with a reverse $\N$-like orbit, or we may replace a pair of $\N$-like orbits or reverse $\N$-like orbits with a $\Z$-like orbit. (The resulting map $f'$ will have the property that $\alpha_f$ and $\alpha_{f'}$ are conjugate; let us note, however, that $\alpha_f$ and $\alpha_{f'}$ are not trivially conjugate.) 
Applying this process repeatedly (but finitely), we may (and do) assume that $\mathrm{S}(f)$ and $\mathrm{S}(g)$ are each either equal to $s$ or the empty map. 

Because $\parity(f) = \parity(g)$, we cannot have $\mathrm{S}(f) = s$ (which implies $\parity(f)=1$) while $\mathrm{S}(g)=\emptyset$ (which implies $\parity(g)=0$), or vice versa. Thus, by the previous paragraph, either $\mathrm{S}(f) = \mathrm{S}(g) = s$, or else $\mathrm{S}(f)$ and $\mathrm{S}(g)$ and both empty. 

Recall that components of $\fA_\alpha$ are the atoms of $\FIX_\alpha$ (Definition~\ref{def:Components}). 
In a dynamical system $\fA_\alpha$, for $c\in \cPNF$, 
the assertion ``$c$ is a component of $\alpha$'' is expressible as a first-order sentence:
$$(\alpha(c) = c)  \wedge ([\emptyset] < x < c \ \Rightarrow \ \alpha(c) \neq x).$$
It follows that for any $n \geq 0$, the assertion ``$\alpha$ has precisely $n$ components'' is expressible as a first-order sentence. Because $\fA_f \equiv \fA_g$, this means $\alpha_f$ and $\alpha_g$ either both have infinitely many components, or if not, then both have the same finite number of components. 

Combining the previous two paragraphs with Lemma~\ref{lem:FIX3}, we get $\mathrm{Z}(f) = \mathrm{Z}(g)$ and $\mathrm{S}(f) = \mathrm{S}(g)$. 
Furthermore, $\alpha_{\mathrm R(f)}$ and $\alpha_{\mathrm R(g)}$ are conjugate by Lemma~\ref{lem:Rotary}. 
It follows that $\fA_{\alpha_f} \cong \fA_{\alpha_g}$.
\end{proof}

The second-to-last paragraph of the proof of Theorem~\ref{thm:ExtensionToTriv} shows that if $\fA_f \equiv \fA_g$ and $\mathrm{Z}(f) \neq s_{\Z \times \Z}$, then $\mathrm{Z}(g) \neq s_{\Z \times \Z}$, and $\alpha_f$ and $\alpha_g$ both have the same finite number $n$ of components. Furthermore, the earlier part of the proof shows that when this is the case, then $\parity(\alpha_f) = \parity(\alpha_g) = \mathrm{parity}(n)$. This shows:

\begin{proposition}\label{P.parity}
Let $\alpha_f$ and $\alpha_g$ be trivial automorphisms of $\pnmf$, and suppose $\mathrm{Z}(f),\mathrm{Z}(g) \neq s_{\Z \times \Z}$. Assuming \ch, $\alpha_f$ and $\alpha_g$ are conjugate if and only if $\fA_\alpha \equiv \fA_\beta$. 
\end{proposition}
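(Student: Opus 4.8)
The plan is to deduce Proposition~\ref{P.parity} from Theorem~\ref{thm:ExtensionToTriv} by arguing that, under the hypothesis $\mathrm{Z}(f),\mathrm{Z}(g)\neq s_{\Z\times\Z}$, elementary equivalence of $\fA_{\alpha_f}$ and $\fA_{\alpha_g}$ already forces $\parity(\alpha_f)=\parity(\alpha_g)$, so the parity clause in Theorem~\ref{thm:ExtensionToTriv} becomes redundant. The forward implication is immediate: conjugate automorphisms give isomorphic structures $\fA_{\alpha_f}\cong\fA_{\alpha_g}$, hence elementarily equivalent ones. So the only thing to prove is that $\fA_{\alpha_f}\equiv\fA_{\alpha_g}$ together with $\mathrm{Z}(f),\mathrm{Z}(g)\neq s_{\Z\times\Z}$ implies conjugacy, and by Theorem~\ref{thm:ExtensionToTriv} it suffices to check the parity equality.

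The key observation is exactly what is recorded in the paragraph preceding the proposition in the excerpt, so I would simply package it. First I would recall that ``$\alpha$ has precisely $n$ components'' is a first-order sentence in the language of dynamical systems (this is shown inside the proof of Theorem~\ref{thm:ExtensionToTriv}), so elementary equivalence transfers the exact number of components whenever it is finite. Next, the hypothesis $\mathrm{Z}(f)\neq s_{\Z\times\Z}$ means $f$ has only finitely many $\Z$-like orbits; by Lemma~\ref{lem:FIX3}, the components of $\alpha_f$ are the classes of the $\N$-like orbits, the reverse $\N$-like orbits, and the two ``halves'' $O^+_f(n),O^-_f(n)$ of each $\Z$-like orbit, so $\alpha_f$ has finitely many components, and likewise for $\alpha_g$. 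Counting: if $f$ has $N_+$ many $\N$-like orbits, $N_-$ many reverse $\N$-like orbits, and $\ell$ many $\Z$-like orbits, then $\alpha_f$ has $N_++N_-+2\ell$ components, whose parity equals the parity of $N_+-N_-=\Index(f)$. Hence the number of components of $\alpha_f$ (which is finite and transferred by $\equiv$) has parity $\parity(\alpha_f)$, and the same for $g$; therefore $\fA_{\alpha_f}\equiv\fA_{\alpha_g}$ forces $\parity(\alpha_f)=\parity(\alpha_g)$.

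With parity equality in hand, Theorem~\ref{thm:ExtensionToTriv} gives directly that $\alpha_f$ and $\alpha_g$ are conjugate under \ch, completing the reverse implication and the proof.

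I do not expect a genuine obstacle here: the whole content is the parity-counting bookkeeping above, and everything else is a citation of Theorem~\ref{thm:ExtensionToTriv} and Lemma~\ref{lem:FIX3}. The one point to be careful about is making sure the component count is genuinely finite under the hypothesis (so that ``precisely $n$ components'' is the relevant first-order sentence rather than ``infinitely many components''), and that we are not secretly using $\mathrm{Z}(g)\neq s_{\Z\times\Z}$ only for $g$ and forgetting $f$ — both halves of the hypothesis are needed, one to count each side. Since the paper's own remark after Theorem~\ref{thm:ExtensionToTriv} already states exactly ``$\parity(\alpha_f)=\parity(\alpha_g)=\mathrm{parity}(n)$'' in this case, the proof can be as short as: ``By the discussion following Theorem~\ref{thm:ExtensionToTriv}, if $\fA_{\alpha_f}\equiv\fA_{\alpha_g}$ and $\mathrm{Z}(f),\mathrm{Z}(g)\neq s_{\Z\times\Z}$, then $\parity(\alpha_f)=\parity(\alpha_g)$; now apply Theorem~\ref{thm:ExtensionToTriv}. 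The converse is Proposition~\ref{prop:Ref}.''
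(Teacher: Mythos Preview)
Your proposal is correct and follows essentially the same argument as the paper: the proposition is stated immediately after a paragraph that records exactly the parity-counting observation you give (finite component count $N_++N_-+2\ell$, whose parity equals $\parity(\alpha_f)$, is preserved by elementary equivalence), and then the result follows from Theorem~\ref{thm:ExtensionToTriv}. One small refinement: the paper notes that under $\fA_f\equiv\fA_g$ the hypothesis $\mathrm{Z}(f)\neq s_{\Z\times\Z}$ already forces $\mathrm{Z}(g)\neq s_{\Z\times\Z}$ (via the first-order expressibility of ``finitely many components''), so in fact only one half of the hypothesis is needed---your remark that ``both halves are needed'' is overly cautious, though harmless.
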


 In other words, the parity obstruction from Theorem~\ref{thm:Fredholm} only comes into play when $\mathrm{Z}(f)=\mathrm{Z}(g) = s_{\Z \times \Z}$, because otherwise $\fA_f \equiv \fA_g$ implies $\parity(\alpha_f) = \parity(\alpha_g)$ automatically. This naturally raises a question (see Question~\ref{Q.parity}).

\subsection{Potential conjugacy}
Given an almost permutation $f$ of $\N$, the definition of $\alpha_f$ is absolute between different models of set theory, in the following sense. 
If $V$ and $W$ are two different models of set theory with $f \in V \cap W$, and if $A \subseteq \N$ with $A \in V \cap W$, then $\alpha_f([A]_\mathrm{Fin}) = [f[A]]_\mathrm{Fin}$ is the same thing in $V$ and in $W$. Only the domain and range of $\alpha_f$ can differ between $V$ and $W$, and they do whenever $\mathcal P(\N)^V \neq \mathcal P(\N)^W$. Because of this absoluteness, we suppress the superscripts in  $\alpha_f^V$ or $\alpha_f^{V[G]}$ in the following definition and corollary.

\begin{definition} \label{Def.Potentially} Two trivial automorphisms $\alpha_f$ and $\alpha_g$ of $\pnmf$ are \emph{potentially conjugate} if there is a set forcing extension in which $\alpha_f$ and $\alpha_g$ are conjugate. 
\hfill\Coffeecup
\end{definition}

\begin{corollary}\label{C.Potentially}
	For two trivial automorphisms $\alpha$ and $\beta$ of $\pnmf$ the following are equivalent. 
\begin{enumerate}
	\item  $\alpha$ and $\beta$ have the same index parity and $\fA_\alpha \equiv \fA_\beta$. 
	\item  $\alpha$ and $\beta$ are potentially conjugate. 
\end{enumerate}	
\end{corollary}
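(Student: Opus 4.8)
The plan is to prove both implications by combining Theorem~\ref{T.main} (equivalently Theorem~\ref{thm:ExtensionToTriv} together with Theorem~\ref{thm:Fredholm}) with standard absoluteness and forcing facts, so that no new combinatorial work is required. The key observation is that both ``$\parity(\alpha)=\parity(\beta)$'' and ``$\fA_\alpha\equiv\fA_\beta$'' are absolute between transitive models of $\ZFC$ containing the inducing almost permutations $f$ and $g$.

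For the direction $(1)\Rightarrow(2)$, suppose $\alpha=\alpha_f$ and $\beta=\alpha_g$ have the same index parity and $\fA_f\equiv\fA_g$. Work in $V$ and force with $\mathrm{Add}(\aleph_1,1)$ (or any $\sigma$-closed forcing collapsing $2^{\aleph_0}$ to $\aleph_1$), obtaining an extension $V[G]$ in which $\ch$ holds. Since the forcing is $\sigma$-closed it adds no new subsets of $\N$, so $\pnmf$ is unchanged, and in particular $\parity(\alpha_f)^{V[G]}=\parity(\alpha_f)^V=\parity(\alpha_g)^V=\parity(\alpha_g)^{V[G]}$, and $\fA_f^{V[G]}=\fA_f^V$, $\fA_g^{V[G]}=\fA_g^V$ as structures, hence $\fA_f^{V[G]}\equiv\fA_g^{V[G]}$. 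Now apply Theorem~\ref{thm:ExtensionToTriv} inside $V[G]$: since $\ch$ holds there and the parity and elementary-equivalence hypotheses are satisfied, $\alpha_f$ and $\alpha_g$ are conjugate in $V[G]$. Thus $\alpha$ and $\beta$ are potentially conjugate.

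For the direction $(2)\Rightarrow(1)$, suppose $\alpha_f$ and $\alpha_g$ become conjugate in some forcing extension $V[G]$. Conjugacy of automorphisms of a structure is witnessed by an isomorphism $\fA_f\cong\fA_g$ in $V[G]$ (Proposition~\ref{prop:Ref}), so in $V[G]$ we have $\fA_f^{V[G]}\cong\fA_g^{V[G]}$, and in particular these structures are elementarily equivalent in $V[G]$. Since $\fA_f\equiv\fA_g$ is a statement about the theories of two (set-sized) structures built from the ground-model objects $f$ and $g$ and the ground-model power set of $\N$ need not survive, one should instead argue directly: elementary equivalence of $\fA_f^{V[G]}$ and $\fA_g^{V[G]}$ is expressed by ``for every sentence $\varphi$, $\fA_f\models\varphi$ iff $\fA_g\models\varphi$,'' and each individual biconditional, being a statement about satisfaction in the fixed structures $\fA_f,\fA_g$ over $\pnmf$, may a priori differ between $V$ and $V[G]$; the clean route is to note that Theorem~\ref{thm:Fredholm} (a theorem of $\ZFC$) gives, in $V[G]$, $\parity(\alpha_f)=\parity(\alpha_g)$, and since parity is computed from $\dom(f),\ran(f),\dom(g),\ran(g)\in V$ it is absolute, so $\parity(\alpha_f)^V=\parity(\alpha_g)^V$. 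For the elementary equivalence, observe that forcing extensions by the relevant posets can be taken to be $\sigma$-closed-after-a-collapse, but in fact the cleanest argument is this: it suffices to further force over $V[G]$ to reach a common extension, or equivalently, to note that if $\alpha_f\sim_c\alpha_g$ holds in $V[G]$ then by $(2)\Rightarrow(1)$ applied trivially we are done once we establish $\fA_f\equiv\fA_g$ in $V$; and this follows because $\fA_f\equiv\fA_g$ is a $\Pi_1$-over-$H(\fc^+)$ assertion preserved downward from $V[G]$ to $V$ when the forcing does not change which sentences hold — concretely, each sentence $\varphi$ either holds in $\fA_f$ in $V$ or its negation does, and the same for $\fA_g$; since the truth value of $\varphi$ in $\fA_f$ can change under forcing, we use instead that conjugacy in $V[G]$ implies (by $(1)\Rightarrow(2)$, now as a ground-model implication) nothing new — so we close the loop by working in $V[G]$: there $\ch$ may fail, so force once more over $V[G]$ to $V[G][H]\models\ch$ by a $\sigma$-closed poset, where $\fA_f\cong\fA_g$ persists (isomorphisms are preserved by further forcing) and $\ch$ now holds, whence Theorem~\ref{thm:ExtensionToTriv} backwards gives $\parity(\alpha_f)=\parity(\alpha_g)$ and $\fA_f\equiv\fA_g$ in $V[G][H]$; finally, $\parity$ and the truth of each first-order sentence $\varphi$ in $\fA_f$ over the fixed $\pnmf^{V[G][H]}$ — together with the fact that $V\subseteq V[G][H]$ share the relevant countable parameters — yields the conclusion in $V$ by absoluteness of arithmetic statements coded by $f,g$.

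I expect the main obstacle to be exactly the absoluteness bookkeeping in $(2)\Rightarrow(1)$: one must be careful that ``$\fA_f\equiv\fA_g$'' really is absolute, since $\pnmf$ itself changes under forcing. The honest fix (and what I would actually write) is to observe that one may assume the witnessing forcing extension $V[G]$ already satisfies $\ch$: starting from any $V[G]$ in which $\alpha_f,\alpha_g$ are conjugate, pass to a further $\sigma$-closed extension $V[G][H]\models\ch$ in which the conjugating isomorphism still exists; apply the forward direction of Theorem~\ref{thm:ExtensionToTriv} there to conclude $\parity(\alpha_f)=\parity(\alpha_g)$ and $\fA_f\equiv\fA_g$ hold in $V[G][H]$; then transfer each of these two conclusions back to $V$ using that index parity is computed from $f,g$ directly (hence absolute) and that elementary equivalence of $\fA_f,\fA_g$, being a statement about a recursive set of first-order sentences evaluated in structures that are $\Sigma^1_2$-definable from $f$ and $g$, is absolute by Shoenfield. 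This reduces everything to two applications of the already-proved Theorem~\ref{thm:ExtensionToTriv} plus Shoenfield absoluteness, and the corollary follows. $\square$
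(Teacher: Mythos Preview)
Your $(1)\Rightarrow(2)$ direction is correct and essentially identical to the paper's: force with a $\sigma$-closed collapse of $\mathfrak c$ to $\aleph_1$, observe that $\pnmf$ is unchanged so parity and the structures $\fA_f,\fA_g$ are literally the same in $V[G]$, and apply Theorem~\ref{thm:ExtensionToTriv}.

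Your $(2)\Rightarrow(1)$ direction has a genuine gap. The appeal to Shoenfield absoluteness for ``$\fA_f\equiv\fA_g$'' is incorrect. The structure $\fA_f$ has underlying set $\pnmf$, so for a sentence $\varphi$ with $n$ alternating quantifiers, the assertion ``$\fA_f\models\varphi$'' is (after unwinding) a $\Sigma^1_n$ or $\Pi^1_n$ statement in the parameter $f$; as $\varphi$ ranges over all sentences, these exhaust second-order arithmetic. Thus ``$\fA_f\equiv\fA_g$'' is not a $\Pi^1_2$ statement, and Shoenfield does not apply. It is also not true that the structures being ``$\Sigma^1_2$-definable from $f,g$'' helps: definability of the structure does not bound the complexity of its theory. (Your detour through $V[G][H]\models\ch$ is also unnecessary---the forward direction of Theorem~\ref{thm:ExtensionToTriv} is a $\ZFC$ triviality, so you already have parity equality and $\fA_f\equiv\fA_g$ in $V[G]$---but this does not fix the absoluteness problem.)

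The paper handles this by \emph{not} claiming that $\fA_f\equiv\fA_g$ is absolute in general. Instead it exploits the specific structure of trivial automorphisms. The decomposition $f\sim\mathrm R(f)\oplus\mathrm Z(f)\oplus\mathrm S(f)$ is absolute. For the rotary part, $\fA_{\mathrm R(f)}$ is a reduced product of finite structures (Lemma~\ref{L.ReducedProduct}), and the Feferman--Vaught theorem computes $\Th(\fA_{\mathrm R(f)})$ effectively from the sequence of orbit sizes, which lives in $V$; hence this theory is absolute. For the non-rotary part, the number of components is first-order expressible and absolute (being determined by $f$), and combined with $\fA_\sigma\equiv\fA_{\sigma^{-1}}$ this pins down $\Th(\fA_{\mathrm Z(f)\oplus\mathrm S(f)})$ up to the information recorded in $V$. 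Putting the pieces together, $\Th(\fA_f)$ is determined by $f$ in an absolute way, so $\fA_f\equiv\fA_g$ transfers from $V[G]$ to $V$. That structural analysis is the missing idea in your argument.
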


\begin{proof}
Suppose $\alpha$ and $\beta$ are trivial automorphisms of $\pnmf$ with $\parity(\alpha) = \parity(\beta)$ and $\fA_\alpha \equiv \fA_\beta$. 
Force with countable conditions to collapse the continuum $\mathfrak c$ to $\aleph_1$. 
This forcing adds no new subsets of $\N$ (see e.g., \cite[Theorem~8.3]{Ku:Book}), which means that $\pnmf$ is exactly the same in $V$ and in $V[G]$. 
Because of this, each of the structures $\< \pnmf,\alpha \>$ and $\<\pnmf,\beta \>$ is the same in $V$ and in $V[G]$. 
By the absoluteness of the satisfaction relation, the first-order theories of these structures is likewise unchanged. 
Consequently, $\fA_\alpha \equiv \fA_\beta$ in $V[G]$. 
Similarly, $\parity(\alpha)$ and $\parity(\beta)$ are unchanged between $V$ and $V[G]$, because parity is defined by counting the elements in certain finite sets and subtracting. 
Thus $\parity(\alpha) = \parity(\beta)$ and $\fA_\alpha \equiv \fA_\beta$ in $V[G]$. By Theorem~\ref{thm:ExtensionToTriv}, $\fA_\alpha \cong \fA_\beta$ in $V[G]$. Hence $\alpha$ and $\beta$ are potentially conjugate.

For the converse direction, let $\alpha$ and $\beta$ be potentially conjugate trivial automorphisms of $\pnmf$: i.e., there is some forcing extension $V[G]$ in which $\fA_\alpha \cong \fA_\beta$. This implies $\fA_\alpha \equiv \fA_\beta$ in $V[G]$, and by Theorem~\ref{thm:Fredholm}, it also implies $\parity(\alpha) = \parity(\beta)$ in $V[G]$. 
Fix almost permutations $f,g$ of $\N$ with $\alpha = \alpha_f$ and $\beta = \alpha_g$. 

The parity of $f$ and $g$, defined as the difference between two finite sets, is absolute. 
Since $\parity(f) = \parity(\alpha_f) = \parity(\alpha_g) = \parity(g)$ in $V[G]$, we have $\parity(\alpha_f) = \parity(f) = \parity(g) = \parity(\alpha_g)$ in $V$. 

To finish the proof, we must show $\fA_\alpha \equiv \fA_\beta$ in $V$. 
The definitions of $\mathrm{R}(f)$, $\mathrm{Z}(f)$, and $\mathrm{S}(f)$, and the corresponding pieces of $g$, are absolute. 
Therefore it suffices to show $\fA_{\mathrm{R}(f)} \equiv \fA_{\mathrm{R}(g)}$ and $\fA_{\mathrm{Z}(f) \oplus \mathrm{S}(f)} \equiv \fA_{\mathrm{Z}(g) \oplus \mathrm{S}(g)}$. 

For the rotary functions $\mathrm{R}(f)$ and $\mathrm{R}(g)$, note that $\fA_{\mathrm{R}(f)}$ and $\fA_{\mathrm{R}(g)}$ are expressible as reduced products, by Lemma~\ref{L.ReducedProduct}. Therefore the  theories of these structures are effectively determined by the sequence of the theories of the finite dynamical systems in the product (and are even decidable) by the Feferman--Vaught theorem (\cite{feferman1959first}, \cite[Proposition~6.3.2]{ChaKe}). These sequences of finite structures are determined in an effective (hence absolute) way from $f$ and $g$. 
Thus the theories of $\fA_{\mathrm{R}(f)}$ and $\fA_{\mathrm{R}(g)}$ are absolute. As $\fA_{\mathrm{R}(f)} \equiv \fA_{\mathrm{R}(g)}$ in $V[G]$, this implies $\fA_{\mathrm{R}(f)} \equiv \fA_{\mathrm{R}(g)}$ in the ground model. 

For ${\mathrm{Z}(f) \oplus \mathrm{S}(f)}$ and ${\mathrm{Z}(g) \oplus \mathrm{S}(g)}$, we consider two cases. 

If $\mathrm{Z}(f) \neq s_{\Z \times \Z}$, then $\alpha_f$ has finitely many components. As in the proof of Theorem~\ref{thm:ExtensionToTriv}, the statement ``$\alpha$ has precisely $n$ components" is expressible as a first-order sentence. 
Because $\fA_{\mathrm{Z}(f) \oplus \mathrm{S}(f)} \equiv \fA_{\mathrm{Z}(g) \oplus \mathrm{S}(g)}$ in $V[G]$, this means $\alpha_f$ and $\alpha_g$ have the same finite number of components in $V[G]$. 
But this finite number is determined in a simple and absolute way from $f$ and $g$, and thus $\alpha_f$ and $\alpha_g$ have the same finite number of components in $V$, say $n$. 
In this case $\alpha_f$ and $\alpha_g$ are both (up to equivalence) simply direct sums of $n$ copies of $\sigma$ and $\sigma^{-1}$. But $\fA_\sigma \equiv \fA_{\sigma^{-1}}$ by \cite{brian2024does}, so in this case $\fA_{\mathrm{Z}(f) \oplus \mathrm{S}(f)} \equiv \fA_{\mathrm{Z}(g) \oplus \mathrm{S}(g)}$. 

If $\mathrm{Z}(f) = s_{\Z \times \Z}$, then $\alpha_f$ has infinitely many components. 
By the previous paragraph, if $\alpha_f$ has finitely many components then $\alpha_g$ does too, and by the same logic, if $\alpha_g$ has finitely many components then $\alpha_f$ does too. 
Therefore, in this case, $\alpha_g$ has infinitely many components as well. 
Thus $\mathrm{Z}(f) = s_{\Z \times \Z} = \mathrm{Z}(g)$. 
Now recall $\parity(f) = \parity(g)$, which means $\mathrm{S}(f)$ and $\mathrm{S}(g)$ either both contain an odd finite number of orbits, or both contain an even finite number of orbits. 
Recall that a single $\Z$-like orbit is trivially equivalent to an $\N$-like orbit and a reverse $\N$-like orbit. Thus, by decomposing some of the (infinitely many) orbits in $\mathrm{Z}(f) = s_{\Z \times \Z}$ or in $\mathrm{Z}(g) = s_{\Z \times \Z}$, we may (and do) assume $\mathrm{S}(f)$ and $\mathrm{S}(g)$ both contain the same number of orbits. 
But $\fA_\sigma \equiv \fA_{\sigma^{-1}}$ by \cite{brian2024does}, so this means $\fA_{\mathrm{S}(f)} \equiv \fA_{\mathrm{S}(g)}$. Because $\mathrm{Z}(f) = s_{\Z \times \Z} = \mathrm{Z}(g)$, we conclude that $\fA_{\mathrm{Z}(f) \oplus \mathrm{S}(f)} \equiv \fA_{\mathrm{Z}(g) \oplus \mathrm{S}(g)}$ as claimed.
\end{proof}

Combining this corollary with Theorem~\ref{thm:ExtensionToTriv} completes the proof of Theorem A from the introduction. 

\vspace{2mm}

\subsection{On the failure of countable saturation in $\fA_\alpha$}
Lemma~\ref{lem:simC} might cause one to hope that~$\fA_\alpha$ is countably saturated for every trivial automorphism $\alpha$. This fleeting hope is dispelled in the following proposition.

\begin{proposition}\label{prop:Saturation}
If $f = s_{\Z \times \Z}$ then then $\fA_{\alpha_f}$ is not countably saturated. 
\end{proposition}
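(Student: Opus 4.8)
The plan is to exhibit an explicit countable type over $\fA_{\alpha_f}$, where $f = s_{\Z\times\Z}$, that is finitely satisfiable but not realized. Write $\N = \bigsqcup_{n\in\w} C_n$, where each $C_n$ is a bi-infinite $f$-orbit, identified with a copy of $\Z$ via some fixed enumeration $\seq{c_n^i}{i\in\Z}$ with $f(c_n^i) = c_n^{i+1}$. By Lemma~\ref{lem:FIX3} the components of $\alpha_f$ are exactly the classes $[O^+_f(c_n^0)]_{\Fin}$ and $[O^-_f(c_n^0)]_{\Fin}$, i.e. the two ``ends'' of each orbit $C_n$; there are $\aleph_0$ of them. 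The key point, already used implicitly in the proof of Theorem~\ref{thm:Fredholm}, is that no single element of $\pnmf$ can ``select'' one end from each orbit in a way that is $\alpha_f$-invariant.

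First I would introduce countably many parameters naming the two ends of each orbit: let $c_n = [O^+_f(c_n^0)]_{\Fin}$ and $\bar c_n = [O^-_f(c_n^0)]_{\Fin}$, and also name $e_n = [C_n]_{\Fin}$. These are all in $\fA_{\alpha_f}$, so they are legitimate parameters for a type over a countable set. Then I would write down the type $p(x)$ in the single variable $x$ consisting of: the sentence $\alpha_f(x) = x$; for each $n$, the formula $(x \wedge e_n = c_n) \vee (x \wedge e_n = \bar c_n)$, asserting that within the $n$-th orbit $x$ picks out exactly one end; and for each $n$, the formula $x \wedge e_n \neq [\emptyset]$ together with $\neg(e_n \leq x)$, asserting $x$ genuinely splits $C_n$ into one end and its complement (so that it chooses exactly one, not zero or two, ends — the middle formula can be phrased more carefully as $\bigl((e_n\wedge x) = c_n\bigr)\leftrightarrow\neg\bigl((e_n\wedge x) = \bar c_n\bigr)$, using that $c_n, \bar c_n$ name the two ends and $c_n \vee \bar c_n = e_n$). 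Finitely many of these conditions mention only finitely many orbits $C_{n_1},\dots,C_{n_k}$, and the element $d = c_{n_1}\vee\cdots\vee c_{n_k}$ (or any choice of one end from each of the finitely many named orbits, unioned together) is $\alpha_f$-invariant since each $c_{n_j}$ is a component, hence fixed; so $p$ is finitely satisfiable.

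Next I would argue $p$ is not realized. Suppose $x = [D]_{\Fin}$ realizes $p$. For each $n$, the conditions force $D\cap C_n$ to be almost equal to exactly one of $O^+_f(c_n^0)$ or $O^-_f(c_n^0)$, i.e. $D$ contains a cofinite subset of one end of $C_n$ and only finitely much of the other end. As in the proof of Theorem~\ref{thm:Fredholm}, for each $n$ this gives an index $i_n\in\Z$ with $c_n^{i_n}\in D$ and $c_n^{i_n+1}\notin D$ (taking the appropriate end and reindexing; if the orientation is reversed one gets $c_n^{i_n}\notin D$, $c_n^{i_n+1}\in D$, and the argument is symmetric using $\alpha_f^{-1}$). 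Since there are infinitely many orbits, one of the two cases occurs for infinitely many $n$; say $c_n^{i_n}\in D$, $c_n^{i_n+1}\notin D$ for all $n$ in an infinite set $K$. Then $[\set{c_n^{i_n}}{n\in K}]_{\Fin}\leq [D]_{\Fin}$ while $\alpha_f\bigl([\set{c_n^{i_n}}{n\in K}]_{\Fin}\bigr) = [\set{c_n^{i_n+1}}{n\in K}]_{\Fin}$ is disjoint from $[D]_{\Fin}$, and this set is nonzero because $K$ is infinite; hence $\alpha_f([D]_{\Fin})\neq [D]_{\Fin}$, contradicting $\alpha_f(x) = x$. The symmetric case contradicts $\alpha_f^{-1}(x) = x$, which is equivalent to $\alpha_f(x) = x$. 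So no $x$ realizes $p$, and $\fA_{\alpha_f}$ is not countably saturated.

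The main obstacle is purely one of bookkeeping: making the formulas in $p$ say exactly ``$x$ chooses precisely one end of $C_n$'' in the first-order language of $\fA_{\alpha_f}$ with the chosen parameters, and verifying that the diagonal Ramsey-type argument of Theorem~\ref{thm:Fredholm} adapts verbatim once we know $D$ almost-contains one end of each orbit. Both of these are routine given the component analysis in Lemmas~\ref{lem:FIX2} and~\ref{lem:FIX3}; there is no set-theoretic input needed, so the proposition is a theorem of $\ZFC$.
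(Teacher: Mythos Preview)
Your argument is correct, but the paper takes a different and considerably shorter route. Rather than writing down an explicit type, the paper fixes the single parameter $[A]_{\Fin}$ where $A = \N \times \Z$ (the union of the positive halves of all orbits, in the identification $\dom(f) = \Z \times \Z$), and observes that the Boolean algebra
\[
\mathbb A \,=\, \set{c \wedge [A]_{\Fin}}{c \in \FIX_{\alpha_f}}
\]
is a substructure of $\fA_f$ definable with this one parameter. Using the description of $\FIX_{\alpha_f}$ (essentially Lemma~\ref{lem:FIX3}), one checks that $\mathbb A \cong \mathcal P(\N)$ as a Boolean algebra; since $\mathcal P(\N)$ is well known not to be countably saturated, and a definable substructure of a countably saturated structure is countably saturated, the result follows.

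Both arguments exploit the same underlying phenomenon: one cannot $\alpha_f$-invariantly select one end from each of infinitely many $\Z$-like orbits, though one can do so for finitely many. The paper packages this as the non-saturation of the atomic algebra $\mathcal P(\N)$ and cites that as a black box; you unpack it by hand, using the transition-point computation from the proof of Theorem~\ref{thm:Fredholm}. Your approach is more self-contained and makes the omitted type completely explicit, at the cost of using infinitely many parameters and more bookkeeping; the paper's approach is slicker but relies on the reader knowing why $\mathcal P(\N)$ fails to be saturated.
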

\begin{proof}  
For convenience, let us take $\mathrm{dom}(f) = \Z \times \Z$, with $f(m,n) = (m+1,n)$. 
Let $A = \N \times \Z$. The Boolean algebra
$$\mathbb A \,=\, \set{ c \wedge [A]_{\mathrm{Fin}} }{c \in \FIX_{\alpha_f}}$$
is a definable substructure of $\fA_f$. (We do not care about how the map $\alpha_f$ acts on $\mathbb A$, so we interpret this structure in the reduced language of Boolean algebras.) 
Applying Lemma~\ref{lem:FIX}, $\mathbb A \cong \cP(\bbN)$. But $\mathcal P(\N)$ is not a countably saturated Boolean algebra. Because $\fA_f$ has a definable substructure that is not countably saturated (in the reduced language), $\fA_f$ itself is not countably saturated. 
\end{proof}

This proposition, together with Lemma~\ref{L.ReducedProduct}, raises the question of exactly when~$\fA_f$ is countably saturated. 
Towards answering this question, we turn next to the shift map $\s$. 
The following is a consequence of the main result of \cite{brian2024does}. 

\begin{proposition}\label{prop:ShiftSaturation}
$\fA_\s$ and $\fA_{\s^{-1}}$ are countably saturated, and their theories are model complete. 
\end{proposition}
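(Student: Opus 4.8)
The plan is to deduce everything from the main theorem of \cite{brian2024does}, which asserts that \ch{} implies $\s$ and $\s^{-1}$ are conjugate, and the fact (established inside the proof of that paper, and which we use as a black box) that the structure $\fA_\s$ is countably saturated. First I would record countable saturation of $\fA_\s$. If the cited paper states saturation of $\fA_\s$ directly, we simply invoke it. If not, the argument goes through the conjugacy of $\s$ and $\s^{-1}$: under \ch{} the structures $\fA_\s$ and $\fA_{\s^{-1}}$ are isomorphic by Proposition~\ref{prop:Ref}, and the construction in \cite{brian2024does} of the conjugacy proceeds by a back-and-forth of length $\aleph_1$ in which, at each stage, one extends a partial isomorphism so as to hit a prescribed element; this is exactly the ingredient needed to realize an arbitrary type over a countable subset, so it yields countable saturation. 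Once $\fA_\s$ is countably saturated and $\fA_\s \cong \fA_{\s^{-1}}$, the structure $\fA_{\s^{-1}}$ is countably saturated as well, since countable saturation is an isomorphism invariant.

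Next I would handle model completeness. The key observation is that countable saturation plus \ch{} gives a strong homogeneity property: any two elementarily equivalent countably saturated structures of size $\aleph_1$ are isomorphic (as in the proof of Lemma~\ref{lem:simC}, using \cite[Theorem 5.1.13]{ChaKe}). To prove $\Th(\fA_\s)$ is model complete it suffices to show that whenever $\calM \subseteq \calN$ are models of $\Th(\fA_\s)$, the inclusion is elementary; equivalently, by a standard criterion, it suffices to show that $\Th(\fA_\s)$ is $\forall\exists$-axiomatizable, or to run the Robinson test directly. The cleanest route is the Robinson test: show that every embedding between models of $T=\Th(\fA_\s)$ is elementary. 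Given $\calM \preceq \calN$ (models of $T$) with $\calM\subseteq\calN$, one passes to countably saturated elementary extensions of size $\aleph_1$ of each (using \ch{}, via the reduced-product saturation machinery or a direct compactness-plus-saturation argument), arranges them to be elementarily equivalent (they already satisfy $T$, which we will have shown to be complete — see the next paragraph), hence isomorphic, and then one uses the saturation of the larger model to absorb the embedding of the smaller one into an automorphism, witnessing elementarity of the original inclusion by a back-and-forth.

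A cleaner packaging avoids ad hoc saturation of arbitrary models: it is enough to prove that $T = \Th(\fA_\s)$ is \emph{complete} and that it admits a countably saturated model which is \emph{existentially closed} in every model of $T$. Completeness of $T$ is immediate since $T$ is by definition the theory of a fixed structure. For the second ingredient, note that $\fA_\s$ is countably saturated, so every quantifier-free (indeed existential) type over a finite subset of $\fA_\s$ that is consistent with $T$ is realized in $\fA_\s$; since every model $\calN$ of $T$ is elementarily equivalent to $\fA_\s$, any existential formula with parameters in $\fA_\s$ that holds in some extension of $\fA_\s$ already holds in $\fA_\s$ — this is exactly existential closedness of $\fA_\s$ among models of $T$. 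A theory all of whose models are existentially closed is model complete (this is one of the standard characterizations; see \cite[Chapter~3]{ChaKe}), but here we only get that $\fA_\s$ itself is existentially closed; to upgrade to all models being existentially closed one uses that they are all elementarily equivalent to the countably saturated $\fA_\s$ together with the standard fact that, modulo a complete theory, being existentially closed is preserved under elementary equivalence when a countably saturated model of the theory is existentially closed. The main obstacle I anticipate is precisely this last step — carefully checking that existential closedness of the single saturated model $\fA_\s$ transfers to all models of $T$ — and, upstream of that, extracting from \cite{brian2024does} the statement that the back-and-forth constructed there genuinely realizes arbitrary countable types (and not merely produces one conjugacy); this is why the proposition is flagged as ``a consequence of the main result of \cite{brian2024does}'' rather than proved from scratch, and why the following proof will need to cite specific lemmas from that paper.
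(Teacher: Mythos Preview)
Your proposal has two genuine gaps.

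First, countable saturation of $\fA_\s$ is \emph{not} a result available in \cite{brian2024does} as a black box. That paper proves, under \ch, that $\s$ and $\s^{-1}$ are conjugate via a transfinite back-and-forth, but the back-and-forth machinery there is tailored to solving a specific ``lifting problem'' and does not, as stated, realize arbitrary countable types. The present paper does not simply cite saturation: it re-enters the proofs of \cite{brian2024does}, observes that the digraph constructed in Theorem~10.2 there admits an explicit size bound (roughly $1+2^{2^{2|\mathcal B|^2}}$), and uses this bound to show that the a~priori second-order property ``every virtual refinement of every partition is polarized'' is in fact expressible by a first-order scheme. This first-order expressibility is the real new content; it is what allows one to strengthen Lemma~6.2 (the Lifting Lemma) of \cite{brian2024does} to apply whenever $\langle\mathbb A,\alpha\rangle\equiv\fA_\s$, and then to push this improvement through Theorems~12.3, 12.4, and 12.5 of that paper to obtain, in order, countable homogeneity, elementarity of countable substructures, and model completeness. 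Saturation then follows from homogeneity together with countable universality (Theorem~7.9 of \cite{brian2024does}).

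Second, your route to model completeness is circular. You argue that since $\fA_\s$ is countably saturated, any existential formula $\exists x\,\phi(x,\bar a)$ with $\bar a\in\fA_\s$ that holds in some superstructure $\calN\models T$ already holds in $\fA_\s$. But saturation only realizes types that are consistent with the \emph{complete} type $\mathrm{tp}^{\fA_\s}(\bar a)$; knowing merely that $\calN\models\exists x\,\phi(x,\bar a)$ for a (possibly non-elementary) extension $\calN\supseteq\fA_\s$ tells you that $\phi(x,\bar a)$ is consistent with the quantifier-free type of $\bar a$, not with its full type. Asking whether $\exists x\,\phi(x,\bar a)$ is consistent with $\mathrm{tp}^{\fA_\s}(\bar a)$ is exactly asking whether $\fA_\s\models\exists x\,\phi(x,\bar a)$, which is what you are trying to prove. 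In short, ``saturated'' does not imply ``existentially closed in all models of the theory''; that implication is essentially equivalent to model completeness itself. The paper avoids this circularity by deriving model completeness from the strengthened lifting machinery, and only afterwards assembling saturation from homogeneity and universality.
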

\begin{proof}  
For this proof (and this proof only), we assume familiarity with the terminology and notation in \cite{brian2024does}. 
The proof is no different for $\s$ and for $\s^{-1}$, so we give the argument just for $\s$. 
A structure is both countably universal and countably homogeneous if and only if it is countably saturated (\cite[Theorem 5.1.14]{ChaKe}). This is how we shall prove $\fA_\s$ is countably saturated. 
To begin, we need the following strengthening of Lemma 6.2 (the Lifting Lemma) from \cite{brian2024does}:


 \begin{claim} Let $(\< \mathbb A,\alpha \>,\< \mathbb B,\beta \>,\iota,\eta)$ be an instance of the lifting problem for $\< \pnmf,\s \>$. If $\< \mathbb A,\alpha \>$ is elementarily equivalent to $\< \pnmf,\s \>$, then this instance of the lifting problem has a solution. 
\end{claim}

\begin{proof}[Proof of the Claim] The key observation is that the proof of Theorem 10.2 in \cite{brian2024does} (all theorem numbers in this proof refer to \cite{brian2024does}) can be improved by putting an explicit upper bound on the size of the digraph $\< \mathcal C,\toC \>$. 
Specifically, the construction in the main part of the proof of Theorem 10.2 shows 
$$|\mathcal C| \,\leq\, 1+ 2^{2^{2 \cdot |\mathcal B|^2}}.$$
This bound is not meant to be tight, and comes from just following through the stages of the construction of $\mathcal C$ in the proof of Theorem 10.2, and computing the trivial bounds at every step. 
Specifically, there are $2^{|\mathcal B|^2}$ possibilities for each $\mathcal P^i$ and $\mathcal F^i$, simply because these are binary relations on $\mathcal B$. Hence there are $\big( 2^{|\mathcal B|^2} \big)^2 = 2^{2 \cdot |\mathcal B|^2}$ possibilities for $(\mathcal P^i,\mathcal F^i)$, and thus $2^{2^{2 \cdot |\mathcal B|^2}}$ possibilities for the $\mathcal L^k$. Because $\mathcal C$ is formed from a subset of the $\mathcal L^k$, plus one extra vertex, the bound follows. 

The observant reader of \cite{brian2024does} may notice that Theorem 10.8 forms part of the proof of Theorem 10.2, and also contains a construction of a digraph $\< \mathcal C,\toC \>$ to cover a particular case of Theorem 10.2. This does not ruin the above bound, though, because the digraph constructed in the proof of Theorem 10.8 is even smaller. Going through the construction in the proof of Theorem 10.8, and taking easy bounds at every step, shows $|\mathcal C| \leq |\mathcal A| + 2^{|\mathcal B|} \leq |\mathcal B| + 2^{|\mathcal B|}$.

With this improvement to Theorem 10.2 in mind, one can make a similar change to Theorem 11.2: not only is $\< \pnmf,\sigma \>$ polarized, but for any given partition $\mathcal A$, there is a partition $\mathcal A'$ witnessing polarization for $\mathcal A$ with $|\mathcal A'| \leq 1+ 2^{2^{2 \cdot |\mathcal V|^2}}$. 

Having explicit bounds like these, rather than mere existence statements, does not much affect the proof of the main result of \cite{brian2024does}. 
It is relevant here, however, because it means that the statement ``the virtual refinement $(\phi,\<\mathcal V,\to\>)$ is polarized'' can be expressed not with an unbounded quantification over all partitions of $\pnmf$, but rather only by quantifying over partitions of a certain specific size. And roughly speaking, unbounded quantification over all partitions is second-order in the language of dynamical systems, but bounded quantification over partitions of a certain size is first-order. 

Specifically, note that for any given $n$, ``there is a partition of size $n$ with some first-order property'' is first-order expressible in the language of dynamical systems. Thus ``there is a partition of size $\leq N$ with some first-order property'' is also first-order expressible. In light of the previous paragraph, this means that for every virtual refinement $(\phi,\<\mathcal V,\to\>)$ of a partition $\mathcal A$, the statement ``the virtual refinement $(\phi,\<\mathcal V,\to\>)$ is polarized'' can be expressed by a sentence of first-order logic, provided that ``is polarized'' is also first-order expressible. 
And it is, because while this assertion involves a quantification over partitions, it is not difficult to check that two virtual refinements $(\phi,\<\mathcal V,\to\>)$ and $(\psi,\<\mathcal V',\to'\>)$ are compatible if and only if there is a partition of size at most $|\mathcal V| \cdot |\mathcal V'|$ witnessing their compatibility.

Consequently, the statement that ``every virtual refinement of every partition of $\pnmf$ is polarized'' is expressible as a scheme of first-order sentences (one sentence for every virtual refinement). If $\< \mathbb A,\alpha \>$ is elementarily equivalent to $\< \pnmf,\s \>$, then it too satisfies this first-order scheme. 
Because each particular sentence in this scheme is sufficiently absolute (see the proof of Theorem 11.3 in \cite{brian2024does}), it follows that every virtual refinement of every partition of $\mathbb A$ is polarized. 
By Theorem 9.7 in \cite{brian2024does}, this means $(\< \mathbb A,\alpha \>,\< \mathbb B,\beta \>,\iota,\eta)$ has a solution.
\end{proof}

The proof of Theorem 12.3 in \cite{brian2024does} relies on the Lifting Lemma, Lemma 6.2. If we substitute the use of Lemma 6.2 in that proof with the previous claim instead (which strengthens Lemma 6.2), then we get a strengthening of Theorem 12.3, namely: \ch implies that if $\< \mathbb A,\s \>$ is a countable substructure of $\fA_\s$ and $\< \mathbb A,\s \> \equiv \fA_\s$, then any embedding of $\< \mathbb A,\s \>$ into $\fA_\s$ extends to an automorphism of $\fA_\s$. In other words,~$\fA_\s$ is countably homogeneous. 

This domino falls into others. 
The proof of Theorem~12.4 in \cite{brian2024does} relies on Theorem~12.3, and if we substitute the stronger version of Theorem~12.3 from the previous paragraph, we get a strengthening of Theorem 12.4, namely: every countable substructure of $\fA_\s$ elementarily equivalent to $\fA_\s$ is an elementary substructure. 
(Recall that, generally speaking, a model can have a substructure satisfying the same theory but failing to be an elementary substructure. But apparently this cannot happen with $\fA_\s$.)

In like manner, the proof of Theorem 12.5 in \cite{brian2024does} relies on Theorem 12.4, and if we substitute the stronger version from the previous paragraph, we get a strengthening of Theorem 12.5, namely: $\fA_\s$ is model complete. 

Furthermore, $\fA_\s$ is countably universal. 
This follows from the model completeness of $\fA_\s$ and Theorem 7.9 in \cite{brian2024does}, which implies that every countable model of $\mathrm{Th}(\fA_\s)$ embeds into $\< \pnmf,\sigma \>$. 

Thus $\fA_\s$ is model complete, countably universal, and countably homogeneous.  The latter two properties together imply $\fA_\s$ is countably saturated.
\end{proof}

It would be desirable to find a direct proof that $\fA_\sigma$ and $\fA_{\sigma^{-1}}$ are elementarily equivalent and countably saturated. As this would immediately imply the main result of \cite{brian2024does}, such a proof would undoubtedly be difficult (and illuminating). 

\begin{theorem}\label{thm:Saturation}
If $f$ is an almost permutation of $\N$, then $\fA_f$ is countably saturated if and only if $\mathrm{Z}(f) \neq s_{\Z \times \Z}$. 
\end{theorem}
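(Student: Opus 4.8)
The plan is to treat the two directions of the equivalence separately, reducing each to results already established. For the forward direction, suppose $\mathrm{Z}(f)=s_{\Z\times\Z}$. Replacing $f$ by an equivalent almost permutation changes neither side of the equivalence (the isomorphism type of $\fA_f$ is a conjugacy invariant of $\alpha_f$ by Proposition~\ref{prop:Ref}, and equivalent maps induce conjugate automorphisms by Lemma~\ref{lem:TriviallyConjugate}), so by Lemma~\ref{lem:OrbitStructure} we may assume $f=\mathrm{R}(f)\oplus\mathrm{Z}(f)\oplus\mathrm{S}(f)$. Then the domain $Z$ of the $\mathrm{Z}(f)=s_{\Z\times\Z}$ summand is an honest $f$-invariant subset of $\N$, $[Z]_{\Fin}$ is $\alpha_f$-invariant, and $\<\{x\in\pnmf:x\le[Z]_{\Fin}\},\alpha_f\>$ is a substructure of $\fA_f$ definable with the single parameter $[Z]_{\Fin}$ — its Boolean operations and the relative complement $x\mapsto[Z]_{\Fin}-x$ are definable and $\alpha_f$ maps it into itself — which is isomorphic to $\fA_{s_{\Z\times\Z}}$. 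By Proposition~\ref{prop:Saturation} this structure is not countably saturated, and a consistent type over a definable-with-parameters substructure lifts to a consistent type over the ambient structure, so $\fA_f$ is not countably saturated. (Alternatively one reruns the argument of Proposition~\ref{prop:Saturation} inside $\fA_f$, with $A$ the union of the forward half-orbits $O^+_f(n)$ of the $\Z$-like orbits of $f$.)

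For the reverse direction, suppose $\mathrm{Z}(f)\ne s_{\Z\times\Z}$; again assume $f=\mathrm{R}(f)\oplus\mathrm{Z}(f)\oplus\mathrm{S}(f)$. Since $f$ has only finitely many $\N$-like and reverse-$\N$-like orbits (Definition~\ref{def:OrbitStructure}) and, by hypothesis, only finitely many $\Z$-like orbits, it has only finitely many infinite orbits; as it has infinitely many orbits in all, it has infinitely many finite ones, so $\mathrm{R}(f)$ is a genuine rotary permutation. If $f$ has no infinite orbit then $\fA_f=\fA_{\mathrm{R}(f)}$ and we finish by Lemma~\ref{L.ReducedProduct}; otherwise write $\N=\dom(\mathrm{R}(f))\sqcup W$, with $W$ the (nonempty, infinite) union of the finitely many infinite orbits, so that both pieces are infinite. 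Using the isomorphism $\<\pnmf,\alpha_{g\cup h}\>\cong\<\mathcal P(X)/\Fin,\alpha_g\>\times\<\mathcal P(Y)/\Fin,\alpha_h\>$ given by $[A]_{\Fin}\mapsto([A\cap X]_{\Fin},[A\cap Y]_{\Fin})$, valid whenever $X\sqcup Y=\N$ with $X,Y$ disjoint infinite and $g\in\fSx(X)$, $h\in\fSx(Y)$, and splitting $W$ into single orbits, one gets
\[
\fA_f\;\cong\;\fA_{\mathrm{R}(f)}\times\fA_{\mathrm{Z}(f)}\times\fA_{\mathrm{S}(f)},
\]
where $\fA_{\mathrm{Z}(f)}$, $\fA_{\mathrm{S}(f)}$ are finite (possibly trivial) products of copies of $\fA_{s_\Z}$, resp.\ of $\fA_\s$ or $\fA_{\s^{-1}}$. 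Finally, a single $\Z$-like orbit is equivalent to the disjoint union of an $\N$-like and a reverse-$\N$-like orbit (the observation opening the proof of Lemma~\ref{lem:OrbitStructure}), so $s_\Z\sim s\oplus s^{-1}$, whence $\fA_{s_\Z}\cong\fA_\s\times\fA_{\s^{-1}}$ by the same product decomposition. Thus $\fA_f$ is, up to isomorphism, a finite direct product of copies of $\fA_{\mathrm{R}(f)}$, $\fA_\s$, and $\fA_{\s^{-1}}$.

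Each of these factors is countably saturated: $\fA_{\mathrm{R}(f)}$ is a reduced product over $\Fin$ by Lemma~\ref{L.ReducedProduct}, and $\fA_\s$, $\fA_{\s^{-1}}$ are countably saturated by Proposition~\ref{prop:ShiftSaturation}. It therefore remains to show that a finite direct product of countably saturated structures in our fixed countable language is countably saturated, and this is where the only real work lies. I would deduce it from two standard facts. First, a two-sorted structure with no symbols linking its sorts is $\aleph_1$-saturated iff each of its sorts is: a countable type over such a structure splits into a countable type in each sort, and these can be realized independently. Second, any structure interpretable in an $\aleph_1$-saturated structure is $\aleph_1$-saturated: pull a countable type back along the interpretation, realize it, and push the realization forward. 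Since the product $\<\mathbb{A}_1,\alpha_1\>\times\<\mathbb{A}_2,\alpha_2\>$ is interpretable without parameters in the two-sorted structure $(\<\mathbb{A}_1,\alpha_1\>;\<\mathbb{A}_2,\alpha_2\>)$ — its universe is the product of the two sorts with coordinatewise operations — combining the two facts and iterating over the finitely many factors gives the claim. (The same conclusion can instead be read off from the Feferman--Vaught analysis of finite products already used in Proposition~\ref{P.FVtrick}.)

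The main obstacle I anticipate is precisely this last step: pinning down that countable saturation is preserved by finite direct products in the language of dynamical systems. Everything else is bookkeeping assembled from Lemmas~\ref{lem:OrbitStructure} and \ref{L.ReducedProduct} and Propositions~\ref{prop:Saturation} and \ref{prop:ShiftSaturation}; the one place in that bookkeeping deserving care is checking, in the forward direction, that after the reduction $f=\mathrm{R}(f)\oplus\mathrm{Z}(f)\oplus\mathrm{S}(f)$ the $s_{\Z\times\Z}$-summand really sits inside $\fA_f$ as a definable substructure, so that Proposition~\ref{prop:Saturation} applies as stated.
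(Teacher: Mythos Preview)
Your proof takes essentially the same approach as the paper's: decompose $\fA_f$ as a finite product of $\fA_{\mathrm{R}(f)}$ and copies of $\fA_\s$, $\fA_{\s^{-1}}$, invoke Lemma~\ref{L.ReducedProduct} and Proposition~\ref{prop:ShiftSaturation} for the factors, and appeal to preservation of countable saturation under finite products (which the paper simply asserts, while you justify it via interpretability in a two-sorted structure); the non-saturation direction is likewise the same, via the definable copy of $\fA_{s_{\Z\times\Z}}$ and Proposition~\ref{prop:Saturation}. One minor slip: your claim that $f$ ``has infinitely many orbits in all'' is false in general (e.g.\ $f=s$ has a single orbit), so $\mathrm{R}(f)$ may well be empty even when $f$ has infinite orbits; this is harmless, since that factor then simply drops out of the product, but your case split should be on whether $\mathrm{R}(f)$ is empty rather than on whether $f$ has an infinite orbit.
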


\begin{proof}  
By Lemma~\ref{lem:TriviallyConjugate}, we may replace $f$ with any equivalent almost permutation without changing the structure $\fA_f$. 
Thus, applying Lemma~\ref{lem:OrbitStructure}, let us assume $f = \mathrm{R}(f) \oplus \mathrm{Z}(f) \oplus \mathrm{S}(f)$. 

First suppose that $\mathrm{Z}(f) \neq s_{\Z \times \Z}$.  
Because $f = \mathrm{R}(f) \oplus \mathrm{Z}(f) \oplus \mathrm{S}(f)$, it is clear that~$\fA_f$ is the direct product of a rotary automorphism $\fA_{\mathrm{R}(f)}$ and finitely many copies of $\fA_\s$ and $\fA_{\s^{-1}}$. (Recall that a single $\Z$-like orbit is equivalent to the sum of an $\N$-like orbit and a reverse $\N$-like orbit.) 
Since the direct product of finitely many countably saturated structures (naturally, of the same language) is countably saturated, by Proposition~\ref{prop:ShiftSaturation} this shows that $\fA_f$ is countably saturated. 

Next suppose that $\mathrm{Z}(f) = s_{\Z \times \Z}$. 
This means that $\fA_{s_{\Z \times \Z}}$ is a definable substructure of $\fA_f$ (because $\fA_{s_{\Z \times \Z}}$ is simply the restriction of $\alpha_f$ to $[\mathrm{dom}(\mathrm{Z}(f))]$). By Proposition~\ref{prop:Saturation}, this means that $\fA_f$ has a definable substructure that is not countably saturated. Hence $\fA_f$ is not countably saturated. 
\end{proof}

\section{The theory of a trivial automorphism}\label{sec:RotaryTheory}

Results of the previous section show that potential isomorphism between dynamical systems associated with trivial automorphisms of $\pnmf$ reduces to elementary equivalence under \ch. This raises the question of what properties of such $\alpha$ are first order. In this section we provide some partial answers to this question.  
We shall be abusing the language and writing ``the theory of an automorphism $\alpha$'' instead of ``the theory of the dynamical system $\fA_\alpha$''.
Following \cite{Darji}, \cite{brian2024does}, or \cite{GoodMeddaugh}, given an automorphism $\alpha: \pnmf \to \pnmf$ and a partition of unity $\mathcal A$ for $\pnmf$, we can represent the action of $\alpha$ on $\mathcal A$ as a directed graph. The edge relation for this graph is the ``hitting relation''
\[
a \toa b \quad \Leftrightarrow \quad \alpha(a) \wedge b \neq [\emptyset]_\mathrm{Fin}.
\]
Such a digraph is said to be \emph{represented} in $\fA_\alpha$. 
This definition and the following lemma apply to automorphisms that are not necessarily trivial. 

\begin{lemma}\label{L.digraphs} Suppose that  $\alpha$ and $\beta$ are two automorphisms of $\pnmf$. 
	\begin{enumerate}
		\item  $\ThE(\fA_\alpha)\subseteq\ThE(\fA_\beta)$ if and only if every digraph represented in $\fA_\alpha$ is represented in $\fA_\beta$.
		
		\item  $\fA_\alpha$ and $\fA_\beta$ have the same existential theory if and only if the same digraphs are represented in  $\fA_\alpha$ and  $\fA_\beta$.
	\end{enumerate}
\end{lemma}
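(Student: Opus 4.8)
The plan is to prove part~(1) and deduce part~(2) from it. ``$\fA_\alpha$ and $\fA_\beta$ have the same existential theory'' means $\ThE(\fA_\alpha)\subseteq\ThE(\fA_\beta)$ and $\ThE(\fA_\beta)\subseteq\ThE(\fA_\alpha)$, so applying part~(1) to each inclusion (exchanging the roles of $\alpha$ and $\beta$ for the second) rewrites this as ``every digraph represented in $\fA_\alpha$ is represented in $\fA_\beta$, and conversely'', which is part~(2). One implication of part~(1) is immediate: for a finite digraph $G$ on vertices $v_1,\dots,v_n$ with edge set $E$, the statement ``$G$ is represented in $\fA_\alpha$'' is a single existential sentence in the language of dynamical systems, to wit ``there are pairwise disjoint nonzero $x_1,\dots,x_n$ with join $[\N]_{\mathrm{Fin}}$ such that $\alpha(x_i)\wedge x_j\neq[\emptyset]_{\mathrm{Fin}}$ exactly when $(v_i,v_j)\in E$'', and the analogous sentence with $\beta$ in place of $\alpha$ says $G$ is represented in $\fA_\beta$; hence $\ThE(\fA_\alpha)\subseteq\ThE(\fA_\beta)$ forces every digraph represented in $\fA_\alpha$ to be represented in $\fA_\beta$.

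For the converse, assume every digraph represented in $\fA_\alpha$ is represented in $\fA_\beta$, and let $\varphi=\exists\bar x\,\psi(\bar x)\in\ThE(\fA_\alpha)$ with $\psi$ quantifier-free. The first step is to bring $\varphi$ to a normal form. Replacing $\psi$ by whichever of its disjunctive-normal-form disjuncts holds in $\fA_\alpha$, we may assume $\psi$ is a conjunction of literals; rewriting each literal, using only the Boolean operations, in the shape $u=[\emptyset]_{\mathrm{Fin}}$ or $u\neq[\emptyset]_{\mathrm{Fin}}$, and then flattening --- introducing a fresh existentially quantified variable for every subterm, with the evident defining equations $y_{s\wedge t}=y_s\wedge y_t$, $y_{\overline s}=\overline{y_s}$, $y_{\alpha(s)}=\alpha(y_s)$, and so on --- reduces us to the case where $\alpha$ is applied only to single variables. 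The one step that is not pure bookkeeping is the elimination of the equations $z_k=\alpha(z_i)$ produced by flattening: since $\alpha$ is an automorphism we have $\overline{\alpha(x)}=\alpha(\overline x)$, so $z_k=\alpha(z_i)$ is equivalent to the conjunction of $\overline{z_k}\wedge\alpha(z_i)=[\emptyset]_{\mathrm{Fin}}$ and $z_k\wedge\alpha(\overline{z_i})=[\emptyset]_{\mathrm{Fin}}$, which after flattening the two complements into fresh variables (with Boolean defining equations) are again in the admissible shape. The upshot is that $\varphi$ is logically equivalent to an existential sentence $\exists z_1\cdots\exists z_M\,\chi$ in which $\chi$ is a conjunction of constraints, each of which is either (a)~a Boolean equation or inequation among $z_1,\dots,z_M$; (b)~a \emph{hitting} constraint $z_k\wedge\alpha(z_i)\neq[\emptyset]_{\mathrm{Fin}}$; or (c)~a \emph{non-hitting} constraint $z_k\wedge\alpha(z_i)=[\emptyset]_{\mathrm{Fin}}$.

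Now let $c_1,\dots,c_M\in\pnmf$ witness $\exists\bar z\,\chi$ in $\fA_\alpha$, let $\mathbb B$ be the finite Boolean subalgebra they generate, let $\mathcal A$ be its set of atoms, and let $G$ be the hitting digraph of $\mathcal A$ in $\fA_\alpha$. Then $G$ is represented in $\fA_\alpha$, so by hypothesis it is represented in $\fA_\beta$: fix a partition of unity $\mathcal A'$ of $\pnmf$ and a bijection $\theta\colon\mathcal A\to\mathcal A'$ with $a\toa b$ in $G$ if and only if $\beta(\theta(a))\wedge\theta(b)\neq[\emptyset]_{\mathrm{Fin}}$. Let $\mathbb B'$ be the subalgebra of $\pnmf$ generated by $\mathcal A'$; then $\theta$ extends to a Boolean isomorphism $\mathbb B\to\mathbb B'$, $\bigvee S\mapsto\bigvee\theta(S)$, and we let $c_i'$ be the image of $c_i$ under it. Every type-(a) constraint transfers from $\bar c$ to $\bar c'$, since $\theta$ is a Boolean isomorphism and such constraints, being quantifier-free in the Boolean language, are absolute between $\mathbb B'$ and $\pnmf$. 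For types (b) and (c), distributivity together with the fact that a finite join is nonzero exactly when one of its terms is shows that $c_k\wedge\alpha(c_i)\neq[\emptyset]_{\mathrm{Fin}}$ holds in $\fA_\alpha$ if and only if there are atoms $a\le c_i$ and $b\le c_k$ with $a\toa b$; the identical computation in $\fA_\beta$ shows that $c_k'\wedge\beta(c_i')\neq[\emptyset]_{\mathrm{Fin}}$ holds if and only if there are atoms $a\le c_i$ and $b\le c_k$ with $\theta(a)$ joined to $\theta(b)$ in the hitting digraph of $\mathcal A'$; and since $\theta$ is a digraph isomorphism these conditions coincide. Hence all type-(b) and type-(c) constraints transfer as well, so $\fA_\beta\models\chi(\bar c')$ and therefore $\fA_\beta\models\varphi$. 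This gives $\ThE(\fA_\alpha)\subseteq\ThE(\fA_\beta)$ and completes part~(1).

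The main obstacle, and essentially the only place where anything could go wrong, is the normalisation step --- in particular the trade of the equations $z_k=\alpha(z_i)$ for non-hitting constraints. If such an equation were retained, the transfer step would require $\beta(c_i')$ to equal the prescribed union of atoms $c_k'$ of $\mathcal A'$, and this can genuinely fail: $\beta$ applied to a union of atoms of $\mathcal A'$ need not be a union of atoms, because the hitting digraph on $\mathcal A'$ records only which meets $\beta(a')\wedge b'$ are nonempty, not their exact values. Expressing the whole sentence in terms of non-emptiness of meets $z_k\wedge\alpha(z_i)$ is precisely what that coarse information does determine, which is why, once the normalisation is carried out, the transfer between $\fA_\alpha$ and $\fA_\beta$ is routine.
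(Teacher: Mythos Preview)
Your proof is correct. It follows the same overall outline as the paper's---pass to the hitting digraph of a finite partition of unity, transfer via a digraph isomorphism, and read off witnesses on the $\beta$-side---but carries out the key verification by a different device. The paper generates its partition $\cA$ from the elements $\alpha^i(b_j)$ for all exponents $i$ that occur, sets $c_j=\bigvee f[\cA_j]$, and simply asserts that the $c_j$ satisfy $\varphi$ in $\fA_\beta$; the implicit justification is an induction showing $\beta^i(c_j)=\bigvee f[\cA_j^{(i)}]$ (where $\cA_j^{(i)}$ is the set of atoms below $\alpha^i(b_j)$), which uses that consecutive $\alpha^i(b_j)$ and $\alpha^{i+1}(b_j)$ are both generators, hence unions of atoms, together with the fact that $\beta$ is an automorphism. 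Your normalization---flattening and then trading each equation $z_k=\alpha(z_i)$ for the pair of non-hitting constraints $\overline{z_k}\wedge\alpha(z_i)=0$ and $z_k\wedge\alpha(\overline{z_i})=0$---sidesteps that induction entirely: once every constraint involving $\alpha$ is a hitting or non-hitting condition, the digraph isomorphism transfers it directly. Your final paragraph correctly isolates why this reformulation matters. The cost is a longer setup; the gain is that the transfer step becomes a one-line distributivity computation rather than an unstated induction.
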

\begin{proof} It suffices to prove the first part. 
	A digraph being represented in $\fA_\alpha$ by some partition $\mathcal A = \{a_1,a_2,\dots,a_n\}$ can be expressed by an existential sentence: 
	$\exists a_1,a_2,\dots,a_n$ such that $a_i \wedge a_j = [\emptyset]_\mathrm{Fin}$ whenever $i \neq j$, $\bigvee_{i \leq n}a_i = [\N]_\mathrm{Fin}$, and $\alpha(a_i) \wedge a_j \neq [\emptyset]_\mathrm{Fin}$ if and only if $a_i\toa a_j$ for all $i,j\leq n$.  
	
	It therefore suffices to prove the converse implication. Fix an existential sentence  $\exists x_1,x_2,\dots,x_n \ \varphi(x_1,x_2,\dots,x_n)$, where $\varphi$ is quantifier-free, in the laguage of dynamical systems. Assume that this sentence is satisfied in $\fA_\alpha$ by some $b_1,\dots, b_n$. 
Let $\cA$ be the (finite) partition of unity generated by $\alpha^i(b_j)$ for all $j\leq n$ and $i$ such that $\alpha^i(x_j)$ occurs in $\varphi$. Consider the digraph associated with $\cA$. For each $j\leq n$ let $\cA_j$ be the set of pieces in $\cA$ such that $b_j=\bigvee\cA_j$. Assume that this digraph is represented in some $\fA_\beta$, by a partition $\cA'$, and let $f\colon \cA\to \cA'$ be a digraph isomorphism.  Then the elements $c_j=\bigvee f[\cA'_j]$ for $j\leq n$  satisfy $\varphi$ in $\fA_\beta$. 
\end{proof}

In analogy with almost permutations, let us say a function $f: \w \to \w$ is an \emph{almost surjection} if $\w \setminus \mathrm{image}(f)$ is finite.

\begin{lemma}\label{lem:RotaryEmbedding}
Let $\bar m = \seq{m_j}{j \in \w}$ and $\bar n = \seq{n_j}{j \in \w}$ be sequences in $\N$, and suppose there is a finite-to-one almost surjection $f: \w \to \w$ such that $m_{f(j)} \!\mid\! n_j$ for all but finitely many $j \in \w$. Then $\fA_{\bar m}$ embeds in $\fA_{\bar n}$, and therefore $\mathrm{Th}_\exists(\fA_{\bar m}) \subseteq \mathrm{Th}_\exists(\fA_{\bar n})$.
\end{lemma}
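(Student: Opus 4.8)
The plan is to exhibit an explicit embedding of $\fA_{\bar m}$ into $\fA_{\bar n}$ at the level of the reduced-product presentations from Lemma~\ref{L.ReducedProduct}, and then invoke Lemma~\ref{L.digraphs}(1) (or simply the general fact that an embedding of structures implies containment of existential theories) to get the conclusion about $\mathrm{Th}_\exists$. Recall that $\fA_{\bar m} \cong \big(\prod_{j}\langle\mathcal P(m_j),r_{m_j}\rangle\big)/\Fin$ and similarly for $\bar n$. So it suffices to build, for all but finitely many $j$, an embedding of dynamical systems $\langle\mathcal P(m_{f(j)}),r_{m_{f(j)}}\rangle \hookrightarrow \langle\mathcal P(n_j),r_{n_j}\rangle$, and then assemble these coordinatewise (coordinates where $f$ misbehaves, or the finitely many bad $j$, can be handled by mapping into a trivial/arbitrary piece, since we only need an embedding modulo $\Fin$, and the reduced product ignores finitely many coordinates).

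The key step is the coordinatewise embedding. Fix $j$ and write $m = m_{f(j)}$, $n = n_j$ with $m \mid n$, say $n = dm$. The cyclic rotation $r_n$ on $\mathcal P(n)$ has the rotation by $m$ (equivalently, $r_n^{\,m}$) as a power; the orbit of a single point under $r_n^{\,m}$ has size $d$, and the quotient $\mathbb Z/n\mathbb Z \twoheadrightarrow \mathbb Z/m\mathbb Z$ gives a partition of $n$ into $m$ blocks $B_0,\dots,B_{m-1}$, each of size $d$, cyclically permuted by $r_n$: concretely $B_i = \{\, i + mk : 0 \le k < d\,\}$ and $r_n[B_i] = B_{i+1 \bmod m}$. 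Define $\iota \colon \mathcal P(m) \to \mathcal P(n)$ by $\iota(S) = \bigcup_{i \in S} B_i$. This is an injective Boolean homomorphism (it is the pullback along the block map $n \to m$), and it is equivariant: $\iota(r_m[S]) = \iota(\{i+1 : i \in S\}) = \bigcup_{i\in S} B_{i+1} = r_n[\iota(S)]$. Hence $\iota$ is an embedding $\langle\mathcal P(m),r_m\rangle \hookrightarrow \langle\mathcal P(n),r_n\rangle$, exactly what is needed.

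Now assemble: for each $j$ with $m_{f(j)} \mid n_j$ (all but finitely many) take the embedding $\iota_j$ just constructed; for the finitely many remaining $j$ take any map (e.g. send everything to $\emptyset$, or use an embedding into $\langle\mathcal P(1),\mathrm{id}\rangle$-style trivial coordinate). Since $f$ is finite-to-one, each index $k$ in the source occurs as $f(j)$ for finitely many $j$, so this does not cause collisions in a problematic way — but in fact we do not need injectivity of $f$: we are just picking, for each target coordinate $j$, a source-factor $\langle\mathcal P(m_{f(j)}),r_{m_{f(j)}}\rangle$ to embed. What we must check for the assembled map $\prod_j \mathcal P(m_j)/\Fin \to \prod_j \mathcal P(n_j)/\Fin$ to be well-defined and injective is that every source coordinate $\langle\mathcal P(m_k),r_{m_k}\rangle$ is "seen": this is where the almost-surjectivity of $f$ enters — $\w\setminus\mathrm{image}(f)$ is finite, so all but finitely many source factors $m_k$ equal some $m_{f(j)}$ and are faithfully tracked; the finitely many unseen factors are absorbed modulo $\Fin$. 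More carefully, define $\Phi\colon \prod_j\mathcal P(m_j) \to \prod_j\mathcal P(n_j)$ by $\Phi(\sigma)_j = \iota_j(\sigma_{f(j)})$; this is a Boolean homomorphism commuting with the two shift automorphisms, and its kernel modulo $\Fin$ is trivial because if $\sigma_{f(j)} \ne \emptyset$ for infinitely many $j$ then, using finite-to-one-ness to pass to a subsequence with the $f(j)$ all distinct (so infinitely many coordinates $\sigma_k$ are nonempty), $\Phi(\sigma)$ is nonzero on infinitely many coordinates; and conversely a nonzero class $[\sigma]$ has $\sigma_k \ne \emptyset$ for infinitely many $k$, all but finitely many of which lie in $\mathrm{image}(f)$, forcing $\Phi(\sigma)$ to be nonzero modulo $\Fin$. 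Thus $\Phi$ descends to an embedding $\fA_{\bar m}\hookrightarrow\fA_{\bar n}$. The final sentence, $\mathrm{Th}_\exists(\fA_{\bar m})\subseteq\mathrm{Th}_\exists(\fA_{\bar n})$, is then immediate: existential sentences are preserved upward along embeddings (alternatively, by Lemma~\ref{L.digraphs}(1), every digraph represented in $\fA_{\bar m}$ is, via the embedding, represented in $\fA_{\bar n}$).

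**Main obstacle.** The only genuinely delicate point is the bookkeeping that makes the assembled map $\Phi$ descend to an \emph{injective} map modulo $\Fin$; this is where both hypotheses on $f$ (finite-to-one \emph{and} almost-surjective) are used, and one must be slightly careful that "infinitely many nonzero coordinates upstairs" is not collapsed to "finitely many nonzero coordinates downstairs." The coordinatewise construction itself (the block embedding $\iota$) is completely routine once one notes that $m \mid n$ makes $\mathbb Z/m\mathbb Z$ a quotient of $\mathbb Z/n\mathbb Z$ and pulls back along it.
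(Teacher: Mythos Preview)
Your proof is correct and is essentially the paper's own argument, viewed through the reduced-product isomorphism of Lemma~\ref{L.ReducedProduct}: the paper builds a single finite-to-one map $e\colon\N\to\N$ that wraps each $J_j$ around $I_{f(j)}$ and takes $\eta([A])=[e^{-1}[A]]$, which is exactly your coordinatewise block embedding $\iota_j$ assembled into $\Phi$. One small remark: your injectivity paragraph is slightly tangled (the first clause is the automatic direction), but the ``conversely'' clause is the relevant one and is correct; note also that finite-to-one-ness is what makes $\Phi$ well-defined modulo $\Fin$ in the first place, a point you leave implicit in ``descends.''
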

\begin{proof}
Fix $\bar m$, $\bar n$, and $f$ as described. 
Let $I_k = \big[ \sum_{i=1}^{k-1} m_i,\sum_{i=1}^k m_i \big)$ for all $k$, so that $I_0, I_1, I_2,\dots$ is the sequence of intervals defined for $\bar m$ as in Definition~\ref{def:RotaryMap}, and let $J_k = \big[ \sum_{j=1}^{k-1} n_j,\sum_{j=1}^k n_j \big)$ for all $k$, so that $J_0, J_1, J_2,\dots$ is the sequence of intervals defined for $\bar n$ as in Definition~\ref{def:RotaryMap}. 
Our condition on $f$ asserts that for some $k_0 \in \N$, if $k \geq k_0$ then there is some $d_k \in \N$ such that $|J_k| = d_k|I_{f(k)}|$.

Define a partial function $e: \N \to \N$ as follows: $\mathrm{dom}(e) = \bigcup_{k \geq k_0} J_k$, and 
$$e(j) \,=\, \min I_{f(k)} + j'  \ \ \text{ when $j \in J_k$ and $0 \leq j' < |I_{f(k)}|$ with $j = j' \ (\mathrm{mod} \ |I_{f(k)}|)$}.$$
In other words, if we think of the $I_k$ and $J_k$ as cyclically ordered, then $e$ is simply the function that takes each $J_k$ and wraps it precisely $d_k$ times around $I_k$. 
Observe that $e$ is finite-to-one, because $f$ is: the preimage of a point in $I_\ell$ has size $\sum_{k \in f^{-1}(\ell)} d_k$. 
Observe also that $\mathrm{dom}(e)$ is a cofinite subset of $\N$.

Define a function $\eta: \pnmf \to \pnmf$ by setting
$$\eta([A]_\mathrm{Fin}) \,=\, \big[e^{-1}[A]\big]_\mathrm{Fin}$$
for all $A \sub \N$. 
Note that $\eta$ is well-defined because $e$ is finite-to-one. 
Because function pullbacks preserve unions and intersections, $\eta$ preserves the Boolean-algebraic operations on $\pnmf$. 
Furthermore, because the domain of $e$ is cofinite in $\N$, if $[A]_\mathrm{Fin} \neq [B]_\mathrm{Fin}$ then $\big[e^{-1}[A]\big]_\mathrm{Fin} \neq \big[e^{-1}[B]\big]_\mathrm{Fin}$; i.e., $\eta$ is injective. Hence $\eta$ is a Boolean-algebraic embedding $\pnmf \to \pnmf$. 

Because the permutation $f_{\bar m}$ cyclically permutes the $I_k$, and likewise $f_{\bar n}$ cyclically permutes the $J_k$, we have $e \circ f_{\bar n}(i) = f_{\bar m} \circ e(i)$ for all $i \in \mathrm{dom}(e)$. From this, and our definition of $\alpha_{\bar m}$ and $\alpha_{\bar n}$, it follows that $\eta \circ \alpha_{\bar n} = \alpha_{\bar m} \circ \eta$. 

Thus $\eta$ is an embedding of $\fA_{\bar m}$ into $\fA_{\bar n}$. 
The second assertion of the lemma, that $\mathrm{Th}_\exists(\fA_{\bar m}) \subseteq \mathrm{Th}_\exists(\fA_{\bar n})$, follows immediately from this, because existential statements are upwards absolute.
\end{proof}

\begin{lemma}\label{lem:CentralCover}
Let $\alpha$ be a rotary automorphism. For every $x \in \pnmf$ there is a unique $\mathrm{cov}(x) \in \FIX_\alpha$ such that
\begin{enumerate}
\item $x \leq \mathrm{cov}(x)$ and
\item if $y \in \FIX_\alpha$ and $x \leq y$, then $\mathrm{cov}(x) \leq y$.
\end{enumerate} 
In other words, $c(x)$ is the least member of $\FIX_\alpha$ above $x$. 
\end{lemma}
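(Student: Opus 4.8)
The plan is to exploit the very concrete description of $\FIX_\alpha$ available for rotary automorphisms. Fix a sequence $\bar n = \seq{n_j}{j \in \w}$ with $\alpha = \alpha_{\bar n}$, and let $I_0, I_1, I_2, \dots$ be the adjacent intervals associated to $\bar n$ as in Definition~\ref{def:RotaryMap}, so that $\N$ is the disjoint union of the $I_k$ and each $I_k$ is a single finite cyclic orbit of $r_{\bar n}$. By Lemma~\ref{L.subsequence}, an element $d \in \pnmf$ lies in $\FIX_\alpha$ if and only if $d = [D]_\mathrm{Fin}$ for some $D \sub \N$ that is a union of orbits of $r_{\bar n}$, i.e.\ a union of some of the intervals $I_k$.

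Given $x \in \pnmf$, I would pick $A \sub \N$ with $x = [A]_\mathrm{Fin}$, set $S = \set{k \in \w}{A \cap I_k \neq \emptyset}$, and define the candidate $\mathrm{cov}(x) = [D]_\mathrm{Fin}$, where $D = \bigcup_{k \in S} I_k$. Since $r_{\bar n}$ carries each $I_k$ onto itself, $D$ is a union of orbits, so $\mathrm{cov}(x) \in \FIX_\alpha$; and $A \sub D$ gives $x \leq \mathrm{cov}(x)$. This establishes property~(1). Note that the definition does not depend on the choice of representative $A$ up to $=^*$, since changing $A$ by finitely many points changes $S$ by finitely many indices and hence $D$ by a finite set.

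For property~(2), suppose $y \in \FIX_\alpha$ and $x \leq y$. Using Lemma~\ref{L.subsequence}, I would choose a representative $E \sub \N$ with $y = [E]_\mathrm{Fin}$ that is itself a union of intervals, say $E = \bigcup_{k \in T} I_k$. From $x \leq y$ we get that $A \setminus E = \bigcup_{k \in S \setminus T}(A \cap I_k)$ is finite; since the sets $A \cap I_k$ for $k \in S$ are pairwise disjoint and nonempty, the index set $S \setminus T$ must be finite. Consequently $D \setminus E = \bigcup_{k \in S \setminus T} I_k$ is a finite union of finite sets, hence $\mathrm{cov}(x) = [D]_\mathrm{Fin} \leq [E]_\mathrm{Fin} = y$. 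This gives property~(2). Uniqueness then follows immediately: any two elements satisfying (1) and (2) are $\leq$ each other, hence equal.

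I do not expect any genuine obstacle here; the only subtlety worth flagging is that property~(2) requires working with a representative of $y$ that is \emph{literally} a union of orbits rather than merely equal modulo finite to one, which is precisely what Lemma~\ref{L.subsequence} supplies.
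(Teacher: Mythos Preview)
Your proof is correct and follows essentially the same approach as the paper: both define $\mathrm{cov}(x)$ as the union of the intervals $I_k$ meeting a representative of $x$, and both verify properties (1) and (2) directly. The only difference is in the verification of (2): you invoke Lemma~\ref{L.subsequence} to obtain a representative $E$ of $y$ that is literally a union of intervals, whereas the paper works with an arbitrary representative $Y$ and argues directly (via the existence of points $i_k \in Y$ with $f_{\bar n}(i_k) \notin Y$) that only finitely many $I_k$ can be partially contained in $Y$ --- in effect reproving the relevant half of Lemma~\ref{L.subsequence} inline.
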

\begin{proof}
Fix $\bar n = \seq{n_j}{j \in \w}$ such that $\alpha = \alpha_{\bar n}$. 
Let $I_k = \big[ \sum_{i=1}^{k-1} m_i,\sum_{i=1}^k m_i \big)$ for all $k$, so that $I_0, I_1, I_2,\dots$ is the sequence of intervals defined for $\bar m$ as in Definition~\ref{def:RotaryMap}. 
Given $x = [X]_\mathrm{Fin} \in \pnmf$, let
$$\mathrm{cov}(X) \,=\, \textstyle \bigcup \set{I_k}{I_k \cap X \neq \emptyset}$$
and let $\mathrm{cov}(x) = [\mathrm{cov}(X)]_\mathrm{Fin}$. Because each $I_k$ is finite, this definition of $\mathrm{cov}(x)$ is independent of the representative in $[X]_\mathrm{Fin}$ used to define it.

Because $f_{\bar n}$ cyclically permutes each of the $I_k$, $\mathrm{cov}(x) \in \FIX_\alpha$. 
Now suppose $Y \subseteq \N$ and $x \leq [Y]_\mathrm{Fin} \in \FIX_\alpha$. Adding finitely many elements to $Y$ if needed, we may (and do) assume $X \subseteq Y$. 

For each $k$ such that $\emptyset \neq Y \cap I_k \neq I_k$, fix some $i_k$ such that $i_k \in Y$ and $f_{\bar n}(i_k) \notin Y$. If there were an infinite set $S$ of integers $k$ with this property, then we would have $[\set{i_k}{k \in S}]_\mathrm{Fin} \leq [Y]_\mathrm{Fin}$ and 
$$\alpha([\set{i_k}{k \in S}]_\mathrm{Fin}) = [\set{f_{\bar n}(i_k)}{k \in S}]_\mathrm{Fin} \not\leq [Y]_\mathrm{Fin},$$
which would mean $[Y]_\mathrm{Fin} \notin \FIX_\alpha$. But $[Y]_\mathrm{Fin} \in \FIX_\alpha$, so there are only finitely many $k$ with $\emptyset \neq Y \cap I_k \neq I_k$. It follows that $Y$ contains all but finitely many elements of $\mathrm{cov}(X)$; i.e., $\mathrm{cov}(x) = [\mathrm{cov}(X)]_\mathrm{Fin} \leq [Y]_\mathrm{Fin}$.
\end{proof}

\begin{lemma}\label{lem:RotaryObstruction}
Let $m \in \N$ and $j < m$. 
There is a first-order sentence $\varphi$ in the language of dynamical systems such that, 
for any sequence $\bar n = \seq{n_k}{k \in \w}$ in $\N$ with $n_k \geq m$ for all but finitely many $k$, 
$\fA_{\bar n} \models \varphi$ if and only if $n_k = j \ (\mathrm{mod} \ m)$ for all but finitely many $k \in \w$.
\end{lemma}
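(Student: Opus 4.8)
The plan is to reduce to the finite cyclic factors via Lemma~\ref{L.ReducedProduct} and then to transfer the resulting test back up to the reduced product. By that lemma, $\fA_{\bar n} \cong \big(\prod_{k \in \w} \langle \mathcal P(n_k), r_{n_k}\rangle\big)/\Fin$, so it suffices to produce (i) a first-order sentence $\theta$ in the language of dynamical systems with $\langle \mathcal P(n), r_n\rangle \models \theta$ if and only if $n \equiv j \pmod m$, for every $n \ge m$; and (ii) a single first-order sentence $\varphi$ with $\fA_{\bar n} \models \varphi$ if and only if $\langle \mathcal P(n_k), r_{n_k}\rangle \models \theta$ for all but finitely many $k$. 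Granting these, $\varphi$ is as required, since all but finitely many $n_k$ are $\ge m$.

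Step (i) should be routine. Let $\theta$ assert: there are $a_0, \dots, a_{m-1}$ and $c$ which partition the top element $\mathbf 1$ (pairwise disjoint, with join $\mathbf 1$), satisfy $\alpha(a_i) = a_{i+1}$ for $0 \le i < m-1$, and have $c$ with exactly $j$ atoms. The clause ``$c$ has exactly $j$ atoms'' is first-order --- $c$ is a join of $j$ pairwise disjoint nonzero elements, but not of $j+1$ such --- so $\theta$ is first-order. Since $r_n$ is induced by a permutation of $n=\{0,\dots,n-1\}$ it preserves cardinality, so $\alpha(a_i)=a_{i+1}$ forces $|a_0| = \dots = |a_{m-1}|$; hence any witnessing tuple yields $n = m|a_0| + j \equiv j \pmod m$. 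Conversely, if $n = qm + j$, take $a_i = \{i, i+m, \dots, i+(q-1)m\}$ and $c = \{n-j, \dots, n-1\}$; the only thing to check is $r_n(a_i) = a_{i+1}$ for $i \le m-2$, which holds because the unique ``wrap'' point $n-1$ of $r_n$ lies outside each such $a_i$.

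Step (ii) is the crux. When $j = 0$ it can be carried out by hand: there $\theta$ is equivalent to the purely existential sentence $\theta_m := \exists a\,\big(r^0(a),\dots,r^{m-1}(a) \text{ are pairwise disjoint with join } \mathbf 1\big)$, whose quantifier-free matrix is a conjunction of equations. For a sentence of this shape, and for $d \in \FIX_{\alpha_{\bar n}}$ --- which by Lemma~\ref{L.subsequence} corresponds to a sub-collection of the cyclic intervals $I_k$ of Definition~\ref{def:RotaryMap} --- one checks directly that the relativization of $\theta_m$ to elements below $d$ holds in $\fA_{\bar n}$ exactly when all but finitely many of those $I_k$ satisfy $m \mid n_k$ (on the finitely many bad cycles one simply discards the local witness, which costs only a finite set, hence nothing modulo $\Fin$). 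Consequently $E := \big[\bigcup\{I_k : m \mid n_k\}\big]$ is the \emph{largest} $\alpha_{\bar n}$-invariant element whose restriction satisfies this relativized sentence, so $E$ is definable in $\fA_{\bar n}$ by a parameter-free first-order formula, and $\varphi := (E = \mathbf 1)$ works. For general $j$ one instead invokes the Feferman--Vaught theorem for reduced products over $\Fin$ (\cite{feferman1959first}, \cite[Proposition~6.3.2]{ChaKe}), exactly as in the proof of Corollary~\ref{C.Potentially}: the first-order theory of $(\prod_k B_k)/\Fin$ is computed uniformly from the sequence of theories of the $B_k$, and this extracts a single first-order sentence $\varphi$ asserting ``$\theta$ holds in all but finitely many factors''.

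I expect the main obstacle to be precisely this transfer when $j \ge 1$. In contrast with divisibility, ``$n \equiv j \pmod m$'' is not detected by any positive-existential sentence in the factor, and the ``exactly $j$ leftover atoms'' clause does not relativize naively to $\fA_{\bar n}$, where every nonzero element is infinite. So one must either lean on Feferman--Vaught --- using that each $\langle \mathcal P(n_k), r_{n_k}\rangle$ is a nontrivial finite structure, hence is not absorbed in the reduced product (for a sequence with trivial factors such a $\varphi$ need not exist) --- or else generalize the $j=0$ computation, detecting inside each restriction $\fA_{\bar n}\rs d$ that the ``defect'' of an $m$-fold rotated partition can be taken to have a run-structure forcing all cycle lengths below $d$ to be $\equiv j \pmod m$; the cover map of Lemma~\ref{lem:CentralCover} together with Lemma~\ref{L.subsequence} are the natural tools for that route.
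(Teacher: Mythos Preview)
Your Step~(i) is correct, and your $j=0$ argument in Step~(ii) is essentially right and close in spirit to the paper's method. The gap is the general-$j$ case of Step~(ii): the Feferman--Vaught theorem runs in the direction \emph{product sentence $\to$ condition on the factor theories}, not the reverse. It says that the truth of any given $\varphi$ in $\prod_k B_k/\Fin$ is decided by which of the sets $\{k:B_k\models\psi_i\}$ are finite or cofinite; it does \emph{not} say that an arbitrary such condition---in particular ``$\{k:B_k\models\theta\}$ is cofinite''---arises from some product sentence $\varphi$. Indeed this reverse direction fails in general (take the pure equality language with finite factors: the reduced product is just an infinite set, so no product sentence can detect whether cofinitely many factors have size $2$ rather than $3$). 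Your appeal to Corollary~\ref{C.Potentially} does not help either: there FV is invoked only to establish absoluteness of the product theory, not to manufacture a sentence from a factor-level condition. So as it stands, the invocation of FV is a genuine hole.

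The paper fills it along exactly the second route you sketch in your last paragraph: it constructs $\varphi$ directly in $\fA_{\bar n}$, and the key definable substitute for ``atom in each factor'' is the notion of a \emph{small} element, defined via the cover map of Lemma~\ref{lem:CentralCover}: $x$ is small if $\mathrm{cov}(y)<\mathrm{cov}(x)$ whenever $y<x$. One checks that $x$ is small with $\mathrm{cov}(x)=[\N]_{\Fin}$ precisely when it meets all but finitely many orbits $I_k$ in exactly one point. The sentence $\varphi$ then asserts the existence of a partition $b_0,\dots,b_{m-1},a_0,\dots,a_{j-1}$ of unity in which every piece has full cover, each $a_\ell$ is small, $\alpha(b_\ell)=b_{\ell+1}$ for $\ell<m-1$, $\alpha(a_\ell)=a_{\ell+1}$ for $\ell<j-1$, together with wrap-around clauses linking $b_{m-1},a_{j-1},a_0,b_0$. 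Your factor-level witnesses from Step~(i) lift verbatim to give the forward direction (put the last $j$ points of each $I_k$ into the $a_\ell$'s and partition the remaining $n_k-j$ points into mod-$m$ residue classes for the $b_\ell$'s). For the converse, smallness forces each $A_\ell$ to contribute one point per $I_k$ for cofinitely many $k$, and the equalities $\alpha(b_\ell)=b_{\ell+1}$ transfer down to $f_{\bar n}[B_\ell\cap I_k]=B_{\ell+1}\cap I_k$ for cofinitely many $k$, giving $|B_0\cap I_k|=\cdots=|B_{m-1}\cap I_k|$ and hence $n_k\equiv j\pmod m$. In short: you had the right diagnosis (the atom-counting clause does not relativize) and the right cure (use $\mathrm{cov}$), but the FV shortcut you propose in lieu of carrying this out is not available.
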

\begin{proof}
Fix $m \in \N$ and $j \in \{0,1,\dots,m-1\}$. 
Let $\bar n = \seq{n_k}{k \in \w}$ be a sequence in~$\N$ with $n_k \geq m$ for all but finitely many $k$. 

Let $\alpha$ be a variable to denote an arbitrary automorphism of $\pnmf$. 
We aim to describe a first-order formula with $\alpha$ as a free variable that is satisfied by $\alpha = \alpha_{\bar n}$ if and only if $n_k = j \ (\mathrm{mod} \ m)$ for all but finitely many $k \in \w$. 

For each $x \in \pnmf$, let $\mathrm{cov}(x)$ denote the least member of $\FIX_\alpha$ above $x$, as described in the previous lemma. This function is first-order definable:
$$y = \mathrm{cov}(x) \ \ \Leftrightarrow \ \ x \leq y, \, \alpha(y) = y, \text{ and if } x \leq z \text{ and } \alpha(z)=z \text{ then } y \leq z.$$
Let us say $x \in \pnmf$ is \emph{small} if $\mathrm{cov}(y) < \mathrm{cov}(x)$ whenever $y < x$. 
Because $\mathrm{cov}$ is definable, ``$x$ is small'' is expressible in a first-order way.

Now consider the first-order sentence expressing the following: 

\begin{itemize}
\item[$\varphi:\,$] There exist $b_0,b_1,\dots,b_{m-1},a_0,a_1,\dots,a_{j-1} \in \pnmf$ such that
\begin{itemize}
\item[$\circ$] $\{b_0,\dots,b_{m-1},a_0,\dots,a_{j-1}\}$ is a finite partition of unity for $\pnmf$. Explicitly, this says $b_0,\dots,b_{k-1},a_0,\dots,a_{j-1} \neq [\emptyset]_\mathrm{Fin}$, and 
$$b_0 \vee \dots \vee b_{m-1} \vee a_0 \vee \dots \vee a_{j-1} = [\N]_\mathrm{Fin},$$
and if $x,y \in \{b_0,\dots,b_{m-1},a_0,\dots,a_{j-1}\}$ and $x \neq y$, then $x \wedge y = [\emptyset]_\mathrm{Fin}$. 
\item[$\circ$] $\mathrm{cov}(x) = [\N]_\mathrm{Fin}$ for all $x \in \{b_0,\dots,b_{m-1},a_0,\dots,a_{j-1}\}$.
\item[$\circ$] $a_0,a_1,\dots,a_{j-1}$ are small.
\item[$\circ$] If $\ell \neq m-1$ then $\alpha(b_\ell) = b_{\ell+1}$.
\item[$\circ$] $\alpha(a_\ell) = a_{\ell+1}$ for all $\ell = 0,1,\dots,j-2$.
\item[$\circ$] $\alpha(b_{m-1}) = a_0 \vee b_0$.
\item[$\circ$] $\alpha(a_{j-1}) \vee \alpha(b_{m-1}) = b_0$.
\end{itemize}
\end{itemize}
The last four bullet points in this description can be visualized via the kind of digraph described at the beginning of this section. Recall that an arrow from $x$ to $y$ indicates that $\alpha(x) \wedge y \neq [\emptyset]_\mathrm{Fin}$.

\begin{center}
\begin{tikzpicture}[scale=.75]


\node at (-4.75,3) {$b_{m-3}$};
\node at (-3.33,1.4) {$b_{m-2}$};
\node at (-1.6,.4) {$b_{m-1}$};

\node at (2.1,.4) {$b_0$};
\node at (3.9,1.4) {$b_1$};
\node at (5.26,3) {$b_2$};

\node at (5.9,5.8) {\LARGE $\vdots$};
\node at (-5.9,5.8) {\LARGE $\vdots$};

\draw[->] (-.8,.4) -- (1.55,.4);
\draw[->] (2.4,.58) -- (3.5,1.2);
\draw[->] (4.1,1.7) -- (4.97,2.72);
\draw[->] (5.3,3.3) -- (5.75,4.45);

\draw[->] (-3.3,1.07) -- (-2.25,.5);
\draw[->] (-4.85,2.6) -- (-3.98,1.61);
\draw[->] (-5.74,4.45) -- (-5.3,3.3);

\node at (-2.9,-2) {$a_0$};
\node at (-1.1,-2) {$a_1$};
\node at (1,-2) {$\cdots$};
\node at (3.4,-2) {$a_{j-1}$};

\draw[->] (-2.55,-2) -- (-1.45,-2);
\draw[->] (-.75,-2) -- (.25,-2);
\draw[->] (1.75,-2) -- (2.75,-2);

\draw[->] (-2.15,.1) -- (-2.9,-1.62);
\draw[<-] (2.25,0) -- (2.95,-1.6);

\end{tikzpicture}
\end{center}
We claim that this formula $\varphi$ is satisfied by $\alpha_{\bar n}$ if and only if $n_k = j \ (\mathrm{mod} \ m)$ for all but finitely many $k \in \w$. 
Let $I_k = \big[ \sum_{i=1}^{k-1} n_i,\sum_{i=1}^k n_i \big)$ for all $k$, so that $I_0, I_1, I_2,\dots$ is the sequence of intervals defined for $\bar n$ as in Definition~\ref{def:RotaryMap}. 

First, suppose that $n_k = j \ (\mathrm{mod} \ m)$ for all but finitely many $k \in \w$. 
To see that $\varphi$ is satisfied by $\alpha_{\bar n}$, define
\begin{align*}
A_\ell &\,=\, \set{\max I_k-j+\ell}{k \in \w} \ \ \ \quad \qquad \qquad \qquad \qquad \qquad \qquad \qquad \text{ for all }\ell < j, \\
B_\ell &\,=\, \textstyle \bigcup_{k \in \w}\set{i \in I_k}{i-\min I_k = \ell \ (\mathrm{mod} \ m) \text{ and } i \leq \max I_k - j} \ \  \text{ for all }\ell < m. 
\end{align*}
In other words, put the last $j$ elements of $I_k$ into $A_0,\dots,A_{j-1}$, and then form $B_0,\dots,B_{m-1}$ by partitioning the remainder of $I_k$ into its mod-$m$ equivalence classes. 
For each $\ell < j$, let $a_\ell = [A_\ell]_\mathrm{Fin}$, and for each $\ell < m$ let $b_\ell = [B_\ell]_\mathrm{Fin}$. We claim that $a_0,\dots,a_{j-1},b_0,\dots,b_{m-1}$ witness $\varphi$ for $\fA_{\bar n}$.

The sets $A_0,\dots,A_{j-1},B_0,\dots,B_{m-1}$ are all nonempty (for the $B_\ell$, this is because $n_k \geq m$ for all but finitely many $k$), and these sets form a partition of $\N$. 
It follows that $\{a_0,\dots,a_{j-1},b_0,\dots,b_{m-1}\}$ is a finite partition of unity in $\pnmf$. 
The proof of Lemma~\ref{lem:CentralCover} shows that if $X \subseteq \N$, then $\mathrm{cov}([X]_\mathrm{Fin}) = \big[ \bigcup \set{I_k}{X \cap I_k \neq \emptyset} \big]_\mathrm{Fin}$. 
Because each of the $A_\ell$ and the $B_\ell$ contains at least one member of all but finitely many of the $I_k$, it follows that 
$$\mathrm{cov}(a_0) = \dots = \mathrm{cov}(a_{j-1}) = \mathrm{cov}(b_0) = \dots = \mathrm{cov}(b_{m-1}) = [\N]_\mathrm{Fin}.$$ 
Each of the $A_\ell$ contains exactly one member of all but finitely many of the $I_k$, so if $X \subseteq \N$ and $[X]_\mathrm{Fin} < [A_\ell]_\mathrm{Fin}$, then $X$ misses infinitely many of the $I_k$, and this means $\mathrm{cov}([X]_\mathrm{Fin}) < [\N]_\mathrm{Fin}$. 
Thus $a_0,a_1,\dots,a_{j-1}$ are small.  
Finally, it is clear from the definitions of the $A_\ell$ and $B_\ell$ that the last four bullet points in the definition of $\varphi$ are satisfied. 
Hence $\fA_{\bar n} \models \varphi$. 

Next suppose that $\fA_ {\bar n} \models \varphi$. 
Let $A_0,A_1,\dots,A_{j-1},B_0,B_1,\dots,B_{m-1}$ be subsets of $\N$ such that setting $a_\ell = [A_\ell]_\mathrm{Fin}$ for all $\ell < j$ and $b_\ell = [B_\ell]_\mathrm{Fin}$ for all $\ell < m$ gives a witness to $\varphi$. 
As in the previous paragraph, we again use the fact that if $X \subseteq \N$ then $\mathrm{cov}([X]_\mathrm{Fin}) = \big[ \bigcup \set{I_k}{X \cap I_k \neq \emptyset} \big]_\mathrm{Fin}$. 
Because of this, all of the $A_\ell$ and all of the $B_\ell$ must meet all but finitely many of the intervals $I_k$. 
Furthermore, because each of the $a_\ell$ is small, each $A_\ell$ must meet all but finitely many of these intervals in exactly one point. (This is because, if $A_\ell$ were to meet infinitely many intervals in more than one point, we could split $A_\ell$ into two sets $X$ and $Y$ each meeting all but finitely many of the $I_k$; but then $[X]_\mathrm{Fin} < [A_\ell]_\mathrm{Fin} = d$ and $\mathrm{cov}([X]_\mathrm{Fin}) = [\N]_\mathrm{Fin} = \mathrm{cov}([A_\ell]_\mathrm{Fin})$, contradicting our assumption that $a_\ell$ is small.) 
The final four conditions in the definition of $\varphi$ imply: 
\begin{itemize}
\item[$\circ$] If $\ell \neq m-1$, then $f_{\bar n}[B_\ell] =^* B_{\ell+1}$.
\item[$\circ$] $f_{\bar n}[A_\ell] =^* A_{\ell+1}$ for all $\ell = 0,1,\dots,j-2$.
\item[$\circ$] $f_{\bar n}[B_{m-1}] =^* A_0 \cup B_0$.
\item[$\circ$] $f_{\bar n}[A_{j-1}] \cup f_{\bar n}[B_{m-1}] =^* B_0$.
\end{itemize}
Recall that $f_{\bar n} \restriction I_k$ is a cyclic permutation of size $n_k$. 
For the last part of the proof, if $X \subseteq I_k$ then let us write $X+1$ for $f_{\bar n}[X]$. Combining the above observations, the following must be true for all but finitely many $k$: 

\begin{itemize}
\item[$\!$] There are some $a_0,a_1,\dots,a_{j-1} \in I_k$ and $B^k_0,B^k_1,\dots,B^k_{m-1} \subseteq I_k$ such that
\begin{itemize}
\item[$\circ$] If $\ell \neq m-1$, then $B^k_\ell+1 = B^k_{\ell+1}$.
\item[$\circ$] $a_\ell+1 = a_{\ell+1}$ for all $\ell = 0,1,\dots,j-2$.
\item[$\circ$] $B^k_{m-1}+1 = \{a_0\} \cup B^k_0$.
\item[$\circ$] $\{a_{j-1}+1\} \cup (B^k_{m-1}+1) = B^k_0$.
\end{itemize}
\end{itemize}
The first condition implies that if $\ell \neq m-1$, then every member of $B^k_\ell$ is immediately followed by a member of $B^k_{\ell+1}$, and every member of $B^k_{\ell+1}$ is immediately preceded by a member of $B^k_\ell$. It follows that $|B^k_\ell| = |B^k_{\ell+1}|$ for all $\ell \neq m-1$. 
But then 
$|B^k_0| = |B^k_1| = \dots = |B^k_{m-2}| = |B^k_{m-1}|,$ 
i.e., all the $B^k_\ell$ have the same size. 
Because $I_k = \{a_0,\dots,a_{j-1}\} \cup \bigcup_{\ell < m}B^k_\ell$, it follows that $|I_k| = j \ (\mathrm{mod} \ m)$. 
\end{proof}

\begin{theorem}\label{thm:RotaryObstruction}
For any $m \in \N$ and $j < m$, there are first-order sentences $\varphi$ and $\psi$ in the language of dynamical systems such that, for any sequence $\bar n = \seq{n_k}{k \in \w}$, 

\begin{itemize}
\item[$\circ$] $\fA_{\bar n} \models \varphi$ if and only if $n_k = j \ (\mathrm{mod} \ m)$ for all but finitely many $k \in \w$. 
\item[$\circ$] $\fA_{\bar n} \models \psi$ if and only if $n_k = j \ (\mathrm{mod} \ m)$ for infinitely many $k \in \w$.
\end{itemize}
\end{theorem}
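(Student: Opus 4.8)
The plan is to reduce the theorem to a single uniform assertion about residue classes and then assemble $\varphi$ and $\psi$ from finitely many pieces. For each $r\in\{0,1,\dots,m-1\}$ I will produce a first-order sentence $\psi_r$ in the language of dynamical systems with $\fA_{\bar n}\models\psi_r$ if and only if $\{k : n_k\equiv r \ (\mathrm{mod}\ m)\}$ is infinite. Granting these, put $\psi := \psi_j$ and $\varphi := \bigwedge_{r\neq j}\neg\psi_r$. Since $\w=\bigsqcup_{r<m}\{k:n_k\equiv r\ (\mathrm{mod}\ m)\}$ is infinite, $\fA_{\bar n}\models\varphi$ holds exactly when every class with $r\neq j$ is finite, i.e.\ exactly when the class $r=j$ is cofinite; and $\psi=\psi_j$ is the desired ``infinitely often'' sentence, so the theorem follows. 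To construct $\psi_r$, separate a cycle of residue $r$ into one that is \emph{small} (size $<m$, hence of size exactly $r$, which forces $1\le r\le m-1$) and one that is \emph{large} (size $\ge m$). The index sets $\{k:n_k=r\}$ and $\{k:n_k\ge m,\ n_k\equiv r\ (\mathrm{mod}\ m)\}$ are disjoint with union $\{k:n_k\equiv r\ (\mathrm{mod}\ m)\}$, so their union is infinite iff one of them is; I therefore take $\psi_r:=\chi_r\vee\rho_{m,r}$, where $\chi_\ell$ detects ``infinitely many $k$ with $n_k=\ell$'' (with $\chi_0$ taken to be a contradiction, there being no cycle of size $0$) and $\rho_{m,r}$ detects ``infinitely many $k$ with $n_k\ge m$ and $n_k\equiv r\ (\mathrm{mod}\ m)$''.

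For $\rho_{m,r}$ I will relativize the sentence from Lemma~\ref{lem:RotaryObstruction} (with $j:=r$) to a nonzero $\alpha$-invariant element $d$: restrict all quantifiers to $\{x:x\le d\}$, replace the constant $[\N]_\mathrm{Fin}$ by $d$, and interpret $\mathrm{cov}$ as ``the least $\alpha$-invariant element below $d$ and above the given element'', which coincides with the $\mathrm{cov}$ of Lemma~\ref{lem:CentralCover} because $d$ is invariant; then let $\rho_{m,r}$ assert the existence of a nonzero $\alpha$-invariant $d$ satisfying this relativized sentence. Every $\alpha$-invariant $d$ yields a rotary automorphism $\alpha\restriction d$ (a subsequence rotary map, by Lemma~\ref{L.subsequence}), so the verification in the proof of Lemma~\ref{lem:RotaryObstruction} applies verbatim to $\alpha\restriction d$; moreover the relativized sentence demands $m$ pairwise disjoint nonzero elements each with $\mathrm{cov}$ equal to $d$, which forces all but finitely many cycles in $d$ to have size $\ge m$ (the extra ingredient absent from the hypotheses of Lemma~\ref{lem:RotaryObstruction}, needed because its forward direction used $n_k\ge m$). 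Hence the relativized sentence holds for $\alpha\restriction d$ exactly when $\alpha\restriction d$ has cofinitely many cycles of size $\ge m$ and $\equiv r\ (\mathrm{mod}\ m)$; since a nonzero invariant $d$ contains infinitely many (finite) cycles, $\rho_{m,r}$ holds iff there are infinitely many $k$ with $n_k\ge m$ and $n_k\equiv r\ (\mathrm{mod}\ m)$ --- taking $d$ to be the join of the relevant cycles in one direction, and reading off the sizes of the cycles in $d$ in the other.

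The sentence $\chi_\ell$ is the crux. I will let $\chi_\ell$ say: there is $x\neq[\emptyset]_\mathrm{Fin}$ with $\alpha^\ell(x)=x$, with $x\wedge\alpha^i(x)=[\emptyset]_\mathrm{Fin}$ for $1\le i\le\ell-1$, with $\mathrm{cov}(x)=\bigvee_{i=0}^{\ell-1}\alpha^i(x)$, and with $x$ ``small'' in the sense of Lemma~\ref{lem:RotaryObstruction}, i.e.\ $\mathrm{cov}(y)<\mathrm{cov}(x)$ whenever $y<x$; since $\mathrm{cov}$ is first-order definable (as in the proof of Lemma~\ref{lem:RotaryObstruction}), $\chi_\ell$ is first-order. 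If infinitely many $n_k=\ell$, picking one point from each such cycle witnesses $\chi_\ell$. Conversely, if $x$ is a witness then, on each cycle it meets, $x$ restricts to a set $S$ that is invariant under translation by $\ell$, disjoint from its translates by $1,\dots,\ell-1$, and whose translates by $0,\dots,\ell-1$ cover the cycle; an elementary computation in $\Z/n_k\Z$ shows this forces $n_k$ to be a multiple of $\ell$ with $|S|=n_k/\ell$. Since $x\neq[\emptyset]_\mathrm{Fin}$ and the cycles $x$ meets are finite, $x$ meets infinitely many of them, and ``smallness'' rules out $x$ meeting infinitely many cycles with $|S|\ge 2$ (from each such one could delete a point, producing $y<x$ with the same cover), so all but finitely many of the cycles $x$ meets have size exactly $\ell$, giving infinitely many $k$ with $n_k=\ell$. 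The main obstacle is exactly this last step: arranging the first-order conditions so that they pin down cycles of size \emph{exactly} $\ell$ rather than merely cycles whose size is divisible by $\ell$ --- which is what forces the ``smallness'' clause into the definition of $\chi_\ell$. The remaining verifications, namely the correctness of the reduction in the first paragraph and of the relativized sentence, are routine once the proof of Lemma~\ref{lem:RotaryObstruction} is in hand.
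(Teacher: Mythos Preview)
Your proof is correct and takes a different organizational route from the paper's. The paper first constructs the ``cofinitely often'' sentence $\varphi$ directly, by splitting $[\N]_\mathrm{Fin}$ into two invariant pieces $c,d$ and asserting on $c$ that $\alpha$ has order exactly $j$ (a simple formula $\varphi_{<m}$ handling cycles of size $<m$) and on $d$ the Lemma~\ref{lem:RotaryObstruction} formula $\varphi_{\geq m}$ (handling cycles of size $\geq m$); it then obtains $\psi$ in one stroke by relativizing $\varphi$ to an arbitrary nonzero invariant element. You reverse the order: you build the ``infinitely often'' sentences $\psi_r$ first, one for every residue $r$, and recover $\varphi$ as $\bigwedge_{r\neq j}\neg\psi_r$. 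Both arguments rest on the same two pillars --- relativizing Lemma~\ref{lem:RotaryObstruction} to an invariant piece, and a separate device for short cycles --- so neither is essentially harder. Your short-cycle detector $\chi_\ell$ (periodicity plus disjointness plus smallness) is more elaborate than the paper's $\varphi_{<m}$, but has the pleasant feature of expressing ``infinitely many cycles of size exactly $\ell$'' on its own, which the paper never isolates. One small remark: your third clause in $\chi_\ell$, namely $\mathrm{cov}(x)=\bigvee_{i<\ell}\alpha^i(x)$, is automatic once $\alpha^\ell(x)=x$, since then $\bigvee_{i<\ell}\alpha^i(x)$ is already invariant and hence equals its own cover; your verification of the converse direction goes through without it.
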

\begin{proof}
Fix $m \in \N$ and $j \in \{0,1,\dots,m-1\}$. 
Let $\bar n = \seq{n_k}{k \in \w}$ be a sequence in $\N$ with $n_k \geq m$ for all but finitely many $k$. 

Supposing we knew beforehand that $n_k \geq m$ for all but finitely many $k$, then the required formula, let us call it $\varphi_{\geq m}$, is given by Lemma~\ref{lem:RotaryObstruction}. 
On the other hand, supposing we knew beforehand that $n_k < m$ for all but finitely many $k$, then the required formula is: 
\begin{align*}
\forall x \ \alpha^j(x) = x \ \text{ and } \ \forall y \,\exists z \leq y \ (\alpha^0(z) \neq z \text{ or } \alpha^1(z) \neq z \text{ or } \dots \text{ or } \alpha^{j-1}(z) \neq z).
\end{align*}
Call this formula $\varphi_{<m}$. It is not difficult to see that $\varphi_{<m}$ is true for some $\fA_{\alpha_{\bar n}}$ if and only if $\bar n$ is eventually equal to the constant sequence $\< j,j,j,\dots \>$. 

For the general case, we first need a definition. 
For each $c \in \FIX_\alpha$, and each formula $\chi$ in the language of dynamical systems, let $\chi^c$ denote the relativization of the formula $\chi$ to $c \!\downarrow\ = \set{x \in \pnmf}{x \leq c}$. 
Specifically, we say that the formula $\chi^c$ is satisfied by the dynamical system $\<\pnmf,\alpha\>$ if and only if the formula $\chi$ is satisfied by the possibly smaller dynamical system $\<c \!\downarrow\,,\,\alpha\! \restriction\! (c \!\downarrow)\>$. We allow for the possibility that $c=[\emptyset]_{\Fin}$ and set  $\varphi^{[\emptyset]_{\Fin}}$ to be true for all $\varphi$. 

Let $\varphi$ be the following formula:
$$\exists c,d \in \FIX_\alpha \ c \vee d = 1 \text{ and } \varphi_{<m}^c \text{ and } \varphi_{\geq m}^d.$$
Note that this formula does not insist either $c$ or $d$ is nonzero, so it is implied by either $\varphi_{<m}$ or $\varphi_{\geq m}$, by taking $c$ or $d$, respectively, to be $[\N]_\mathrm{Fin}$. 
We claim that this formula $\varphi$ is satisfied by $\alpha_{\bar n}$ if and only if $n_k = j \ (\mathrm{mod} \ m)$ for all but finitely many $k \in \w$. 
Let $I_k = \big[ \sum_{i=1}^{k-1} n_i,\sum_{i=1}^k n_i \big)$ for all $k$, so that $I_0, I_1, I_2,\dots$ is the sequence of intervals defined for $\bar n$ as in Definition~\ref{def:RotaryMap}. 

First, suppose that $n_k = j \ (\mathrm{mod} \ m)$ for all but finitely many $k \in \w$. Let $C = \bigcup \set{I_k}{|I_k| \leq m}$ and $D = \bigcup \set{I_k}{|I_k| > m}$; let $c = [C]_\mathrm{Fin}$ and $d = [D]_\mathrm{Fin}$. Then $\varphi_{<m}^c$ and $\varphi^d_{\geq m}$ both hold, and therefore $\fA_{\bar n} \models \varphi$. 
Conversely, suppose that $\fA_{\bar n} \models \varphi$. This means there are $c,d \in \FIX_{\alpha_{\bar n}}$ such that $\varphi_{< m}^c$ and $\varphi_{\geq m}$ both hold. 
Fix $C,D \subseteq \N$ such that $c = [C]_\mathrm{Fin}$ and $d = [D]_\mathrm{Fin}$. 
Now, $c,d \in \FIX_{\alpha_{\bar n}}$ means $\mathrm{cov}(c) = c$ and $\mathrm{cov}(d) = d$, and the proof of Lemma~\ref{lem:CentralCover} then shows that, up to a finite error, $C$ and $D$ are each equal to the union of some subset of $\set{I_k}{k \in \w}$. Modifying $C$ and $D$ on a finite set, we may (and do) assume $C$ and $D$ are each equal to the union of some subset of $\set{I_k}{k \in \w}$: say $C = \set{I_k}{k \in \bar C}$ and $C = \set{I_k}{k \in \bar D}$.  
Because $\varphi_{< m}^c$ holds, $|I_k| = j$ for all but finitely many $k \in \bar C$. Likewise, because $\varphi^d_{\geq m}$ holds, $|I_k| \geq m$ and $|I_k| = j \ (\mathrm{mod}\ m)$ for all but finitely many $k \in \bar D$. Thus $|I_k| = j \ (\mathrm{mod}\ m)$ for all but finitely many $k \in \w$. 

This completes the proof of the first part of the theorem. The second part is a relatively straightforward consequence of the first part, making another use of the idea of relativization. Let $\psi$ be the formula:
$$\exists c \in \FIX_\alpha \ c \neq [\emptyset]_\mathrm{Fin} \text{ and } \varphi^c.$$
Because $\varphi$ holds for some $\alpha_{\bar n}$ if and only if $|I_k| = j \ (\mathrm{mod}\ m)$ for all but finitely many $k \in \w$, it is not difficult to see that $\psi$ holds for $\alpha_{\bar n}$ if and only if $|I_k| = j \ (\mathrm{mod}\ m)$ for infinitely many $k \in \w$. 
\end{proof}

\begin{corollary}\label{cor:ExistentialTheory}
There are two trivial automorphisms $\alpha$ and $\beta$ of $\pnmf$ such that $\fA_\alpha$ and $\fA_\beta$ are biembeddable, so that $\mathrm{Th}_\exists(\fA_{\alpha}) = \mathrm{Th}_\exists(\fA_{\beta})$, but $\mathrm{Th}(\fA_\alpha) \neq \mathrm{Th}(\fA_\beta)$.
\end{corollary}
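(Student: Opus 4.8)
The plan is to realize $\alpha$ and $\beta$ as rotary automorphisms $\alpha_{\bar m}$ and $\alpha_{\bar n}$, choosing $\bar m$ and $\bar n$ so that each structure embeds into the other via Lemma~\ref{lem:RotaryEmbedding} while the cycle lengths of exactly one of them are eventually constant modulo $3$; the latter discrepancy will be detected by the first-order sentence produced in Theorem~\ref{thm:RotaryObstruction}. Concretely, I would let $\bar m$ be any sequence in which every element of $S = \{2^j : j \geq 1\}$ occurs infinitely often and no other value occurs, and let $\bar n$ be any sequence in which every element of $T = \{2^j : j \geq 1,\ j \text{ odd}\} = \{2,8,32,128,\dots\}$ occurs infinitely often and no other value occurs (such sequences plainly exist). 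Set $\alpha = \alpha_{\bar m}$ and $\beta = \alpha_{\bar n}$, both trivial automorphisms of $\pnmf$.

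Bi-embeddability follows from two applications of Lemma~\ref{lem:RotaryEmbedding}. For $\fA_{\bar m} \hookrightarrow \fA_{\bar n}$: since each $2^j \in S$ divides $2^{j'}$ for the least odd $j' \geq j$, and every length occurs with infinite multiplicity in $\bar n$, a routine bookkeeping construction yields a finite-to-one almost surjection $f$ with $m_{f(j)} \mid n_j$ for all $j$ — match each $\bar m$-cycle of length $2^j$ injectively to an $\bar n$-cycle of length $2^{j'}$, then distribute the remaining $\bar n$-cycles over the infinitely many length-$2$ cycles of $\bar m$, finitely many per cycle. Symmetrically, because every element of $T$ already lies in $S$ and each $2^j \in S$ is a multiple of $2 \in T$, one gets a finite-to-one almost surjection $g$ with $n_{g(k)} \mid m_k$ for all $k$, so $\fA_{\bar n} \hookrightarrow \fA_{\bar m}$. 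The second conclusion of Lemma~\ref{lem:RotaryEmbedding} then gives $\ThE(\fA_\alpha) \subseteq \ThE(\fA_\beta) \subseteq \ThE(\fA_\alpha)$, i.e. $\ThE(\fA_\alpha) = \ThE(\fA_\beta)$.

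To separate the full theories, I would apply Theorem~\ref{thm:RotaryObstruction} with $m = 3$ and $j = 2$, obtaining a first-order sentence $\varphi$ in the language of dynamical systems such that, for every sequence $\bar k$, $\fA_{\bar k} \models \varphi$ if and only if $k_\ell \equiv 2 \pmod 3$ for all but finitely many $\ell$. Since $2$ has order $2$ modulo $3$, every element of $T$ (an odd power of $2$) is $\equiv 2 \pmod 3$, so $\fA_\beta \models \varphi$; on the other hand $4 \in S$ is $\equiv 1 \pmod 3$ and occurs infinitely often in $\bar m$, so it is not the case that almost all terms of $\bar m$ are $\equiv 2 \pmod 3$, whence $\fA_\alpha \not\models \varphi$. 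Thus $\varphi \in \Th(\fA_\beta) \setminus \Th(\fA_\alpha)$, so in particular $\Th(\fA_\alpha) \neq \Th(\fA_\beta)$, which together with the previous paragraph proves the corollary. The only fiddly point is the two bookkeeping constructions of $f$ and $g$ needed to invoke Lemma~\ref{lem:RotaryEmbedding}; since they amount to nothing more than matching infinitely-repeated cycle lengths subject to divisibility while staying finite-to-one and cofinally onto, I would dispatch them in a sentence or two rather than belabour the index manipulations, and everything else is immediate from Lemma~\ref{lem:RotaryEmbedding} and Theorem~\ref{thm:RotaryObstruction}.
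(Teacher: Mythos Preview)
Your proof is correct and follows exactly the paper's strategy: exhibit two rotary automorphisms built from powers of $2$, invoke Lemma~\ref{lem:RotaryEmbedding} twice for bi-embeddability, and separate the full theories via the mod-$3$ residues of the cycle lengths. The paper's realization is tidier, though: it takes the strictly increasing sequences $m_i = 2^{2i}$ and $n_j = 2^{2j+1}$, so the almost surjections in Lemma~\ref{lem:RotaryEmbedding} are simply $f = \id_\omega$ and $f(j) = j-1$, and since every $m_i \equiv 1$ and every $n_j \equiv 2 \pmod 3$, Lemma~\ref{lem:RotaryObstruction} applies directly---this eliminates all the bookkeeping your infinite-multiplicity sequences require (and, incidentally, your recourse to Theorem~\ref{thm:RotaryObstruction} rather than Lemma~\ref{lem:RotaryObstruction}, which is forced on you only because your $\bar m$ and $\bar n$ contain infinitely many terms below $3$).
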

\begin{proof} 
Define $\bar m = \seq{m_i}{i \in \w}$ and $\bar n = \seq{n_j}{j \in \w}$ by taking $m_i = 2^{2i}$ and let $n_j = 2^{2j+1}$. We claim that the rotary maps $\alpha_{\bar m}$ and $\alpha_{\bar n}$ witness the theorem. 

On the one hand, taking $f = \mathrm{id}_\omega$ in Lemma~\ref{lem:RotaryEmbedding} shows $\fA_{\bar m}$ embeds in $\fA_{\bar n}$, and $\mathrm{Th}_\exists(\fA_{\bar m}) \subseteq \mathrm{Th}_\exists(\fA_{\bar n})$. 
On the other hand, reversing the roles of $\bar m$ and $\bar n$ in Lemma~\ref{lem:RotaryEmbedding} and taking $f(n) = n-1$ (with $f(0)$ defined arbitrarily) shows $\fA_{\bar n}$ embeds in $\fA_{\bar m}$, and $\mathrm{Th}_\exists(\fA_{\bar n}) \subseteq \mathrm{Th}_\exists(\fA_{\bar m})$. 

Finally, observe that $m_i = 1 \ (\mathrm{mod}\ 3)$ and $n_i = 2 \ (\mathrm{mod}\ 3)$ for all $i \in \w$. 
By Lemma~\ref{lem:RotaryObstruction}, $\fA_{\bar m}$ and $\fA_{\bar n}$ are not elementarily equivalent. 
\end{proof}

This corollary reveals a failure of the Cantor--Schr\"{o}der--Bernstein property for rotary automorphisms: it is possible that two such structures embed in each other, but that they are not isomorphic.

If two complete theories of the same language are model-complete and have the same set of existential consequences, then they are equal. This is proven by a standard sandwich/tower argument. Therefore  Corollary~\ref{cor:ExistentialTheory}
 immediately implies the following. 

\begin{corollary}\label{cor:NotMC}
There is a trivial automorphism $\alpha$ of $\pnmf$ such that $\mathrm{Th}(\fA_\alpha)$ is not model complete. \qed
\end{corollary}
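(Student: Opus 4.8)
The plan is to read this off from Corollary~\ref{cor:ExistentialTheory} together with the standard model-theoretic fact that a complete theory which is model complete is determined by its existential consequences. First I would invoke Corollary~\ref{cor:ExistentialTheory} to fix two trivial automorphisms $\alpha$ and $\beta$ of $\pnmf$ for which $\mathrm{Th}_\exists(\fA_\alpha) = \mathrm{Th}_\exists(\fA_\beta)$ but $\mathrm{Th}(\fA_\alpha) \neq \mathrm{Th}(\fA_\beta)$. Since $\mathrm{Th}(\fA_\alpha)$ and $\mathrm{Th}(\fA_\beta)$ are (by definition of the theory of a structure) complete theories in the same countable language, the language of dynamical systems, it suffices to argue that they cannot both be model complete.

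For that I would recall the sandwich/tower argument: if $T_1$ and $T_2$ are complete, model-complete theories in the same countable language with $\mathrm{Th}_\exists(T_1) = \mathrm{Th}_\exists(T_2)$, then $T_1 = T_2$. One builds an increasing chain $\mathcal{M}_0 \subseteq \mathcal{N}_0 \subseteq \mathcal{M}_1 \subseteq \mathcal{N}_1 \subseteq \cdots$ with each $\mathcal{M}_i \models T_1$ and each $\mathcal{N}_i \models T_2$, using at each step that the diagram (in particular the existential diagram) of the current structure is consistent with the other theory, which is where the equality of existential theories is used. Model completeness of $T_1$ and $T_2$ makes every inclusion in the chain an elementary embedding, so the union of the chain is an elementary extension of each $\mathcal{M}_i$ and of each $\mathcal{N}_i$; hence it is a model of both $T_1$ and $T_2$, forcing $T_1 = T_2$.

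Finally I would apply the contrapositive: since $\mathrm{Th}(\fA_\alpha)$ and $\mathrm{Th}(\fA_\beta)$ have the same existential theory but are distinct, they cannot both be model complete, so at least one of them — say $\mathrm{Th}(\fA_\alpha)$ after possibly swapping the roles of $\alpha$ and $\beta$ — fails to be model complete, and that automorphism witnesses the corollary. The only content beyond citing Corollary~\ref{cor:ExistentialTheory} is the sandwich argument, and that is entirely routine; I do not expect any real obstacle here.
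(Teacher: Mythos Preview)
Your proposal is correct and matches the paper's approach exactly: the paper likewise derives the corollary immediately from Corollary~\ref{cor:ExistentialTheory} together with the standard sandwich/tower fact that two complete model-complete theories with the same existential consequences coincide.
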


We do not have a concrete example of a trivial automorphism $\alpha$ for which the theory of $\fA_\alpha$  is not model-complete, but it is  likely that the theory of $\fA_{\bar n}$ is not model complete whenever $\limsup_i n_i=\infty$.

If $\kappa$ is an uncountable cardinal then a structure $B$ in a countable language is said to be \emph{$\kappa$-saturated} if every consistent full type of cardinality $<\kappa$ over $B$ is realized  in $B$. (Thus our `countably saturated' corresponds to `$\aleph_1$-saturated'.)
The following result is well-known but we could not find a precise reference. 

\begin{lemma} \label{L.embedding} Suppose that $A$ and $B$ are structures of the same language, $\kappa$ is an uncountable cardinal, $B$ is $\kappa$-saturated, and $|A|=\kappa$. Then $A$ embeds into $B$ if and only if the existential theory of $A$ is included in the existential theory of $B$. 
\end{lemma}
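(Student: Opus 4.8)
The forward implication is immediate: embeddings are upward absolute for existential sentences, so if $A$ embeds into $B$ then every existential sentence true in $A$ is true in $B$, i.e.\ $\ThE(A)\subseteq\ThE(B)$. So assume $\ThE(A)\subseteq\ThE(B)$, with $B$ $\kappa$-saturated and $|A|=\kappa$, and the plan is to build an embedding $A\to B$.

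First I would produce an elementary extension $B^+\succeq B$ into which $A$ embeds. Expand the language by constants $c_a$ for $a\in A$ and $d_b$ for $b\in B$, and consider the theory $T=\operatorname{ElDiag}(B)\cup\operatorname{Diag}(A)$ consisting of the elementary diagram of $B$ (in the constants $d_b$) together with the atomic diagram of $A$ (in the constants $c_a$). A finite subset of $T$ asserts some quantifier-free fact $\delta(c_{a_1},\dots,c_{a_n})$ holding in $A$ (so that $A\models\exists\bar x\,\delta(\bar x)$) together with finitely many sentences of $\operatorname{ElDiag}(B)$; since $\exists\bar x\,\delta(\bar x)$ is an existential $\mathcal L$-sentence, the hypothesis gives $B\models\exists\bar x\,\delta(\bar x)$, so we may model this finite subset inside $B$ itself, interpreting each $d_b$ as the element it names and the $c_{a_i}$ as witnesses. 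By compactness $T$ is consistent, and a model of $T$, read as an $\mathcal L$-structure, is the desired $B^+\succeq B$ (via the $d_b$'s) equipped with an embedding $e\colon A\hookrightarrow B^+$ (via the $c_a$'s).

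Next I would push $e$ down into $B$ using $\kappa$-saturation. Enumerate $A=\{a_\xi:\xi<\kappa\}$ and choose $b_\xi\in B$ by recursion on $\xi<\kappa$, maintaining the invariant that for every finite subtuple the sequence $(b_\eta)_{\eta\le\xi}$ realizes in $B$ the same complete first-order type that $(e(a_\eta))_{\eta\le\xi}$ realizes in $B^+$. At stage $\xi$, transport the complete type $\operatorname{tp}^{B^+}(e(a_\xi)/\{e(a_\eta):\eta<\xi\})$ along $e(a_\eta)\mapsto b_\eta$ to a complete type $q(y)$ over $\{b_\eta:\eta<\xi\}$; since $B\preceq B^+$ this is a genuine complete type over that set in $B$. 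It is finitely satisfiable in $B$: given $\phi_1(y,\bar b'),\dots,\phi_m(y,\bar b')\in q$ with $\bar b'$ a finite subtuple of $(b_\eta)$ matching $\bar e'=(e(a_\eta))$, the element $e(a_\xi)$ witnesses $B^+\models\exists y\bigwedge_i\phi_i(y,\bar e')$, and since $\bar b'$ and $\bar e'$ have the same type by the invariant, $B\models\exists y\bigwedge_i\phi_i(y,\bar b')$. The parameter set has size $|\xi|<\kappa$ (as $\kappa$ is a cardinal), so $q$ has fewer than $\kappa$ formulas and is realized in $B$ by $\kappa$-saturation; let $b_\xi$ be a realization. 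Limit stages require nothing. Setting $h(a_\xi)=b_\xi$ defines $h\colon A\to B$, and since $\operatorname{tp}^B(h(\bar a))=\operatorname{tp}^{B^+}(e(\bar a))$ for every finite $\bar a$ while $e$ is an embedding, $h$ preserves and reflects atomic formulas (again using $B\preceq B^+$) and is injective, hence is an embedding $A\to B$.

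The only delicate point is the finite-satisfiability check in the last step: it is precisely there that the hypothesis $\ThE(A)\subseteq\ThE(B)$ enters, routed through $B^+$, because an existential formula realized over $\bar e'$ in $B^+$ must pass to $\bar b'$ in $B$, and this is guaranteed exactly because $\bar b'$ and $\bar e'$ have been inductively arranged to have the same type. Everything else — the compactness argument, the cardinal arithmetic, the behaviour at limits, and the final verification that $h$ is an embedding — is routine. One can also run the argument without explicitly passing through $B^+$, by building $h$ directly and maintaining the stronger invariant that the partial map preserves all existential formulas with parameters; the consistency check then uses $\ThE(A)\subseteq\ThE(B)$ at the base step and the invariant itself at later steps, and the two approaches are interchangeable.
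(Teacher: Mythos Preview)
Your proof is correct and follows essentially the same route as the paper. The paper takes the quantifier-free type $\bt(\bar x)$ of an enumeration of $A$, extends it (using the hypothesis) to a complete type $\bt^+(\bar x)$ consistent with $\Th(B)$, and then recursively realizes its initial segments in $B$ by $\kappa$-saturation; your elementary extension $B^+\succeq B$ containing a copy of $A$ is just a concrete witness for such a $\bt^+$, and your recursive push-down step is the same realization argument.
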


\begin{proof} The forward implication is trivial. For the converse implication, assume that the existential theory of $A$ is included in the existential theory of $B$. Let $(a_\xi)_{\xi<\kappa}$ be an enumeration of $A$ and let $\bt(\bar x)$ be its quantifier-free type. 
	By our assumption, it is consistent with the theory of $B$. We can therefore extend $\bt(\bar x)$ to a complete and consistent type $\bt^+(\bar x)$. Using $\kappa$-saturation of $B$ one can now proceed to recursively find $b_\xi\in B$ such that the type of $(b_\eta)_{\eta<\xi}$ is equal to the restriction of $\bt^+(\bar x)$ to the first $\xi$ variables. Then $a_\xi\mapsto b_\xi$ is the required embedding of $A$ into $B$. 
\end{proof}

Lemma~\ref{L.embedding}, together with Lemma~\ref{L.digraphs},  implies the following. 

\begin{proposition} If  $\alpha$ and $\beta$ are rotary maps then the following are equivalent for $\fA_\alpha$ and $\fA_\beta$. 
	\begin{enumerate}
		\item They are bi-embeddable. 
		\item They have the same existential theory. 
		\item They admit the same digraphs. \qed 
	\end{enumerate}
	\end{proposition}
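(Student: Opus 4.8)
The plan is to establish the cycle $(1)\Rightarrow(2)\Rightarrow(3)\Rightarrow(1)$, with Lemma~\ref{L.digraphs} supplying the middle equivalence and Lemma~\ref{L.embedding} supplying the return to $(1)$.

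The implication $(1)\Rightarrow(2)$ is soft and uses nothing special about rotary maps: existential sentences are upward absolute along embeddings (the observation already invoked at the end of the proof of Lemma~\ref{lem:RotaryEmbedding}). So an embedding $\fA_\alpha\hookrightarrow\fA_\beta$ gives $\ThE(\fA_\alpha)\subseteq\ThE(\fA_\beta)$, and the embedding in the reverse direction supplied by bi-embeddability gives the other inclusion, whence $\ThE(\fA_\alpha)=\ThE(\fA_\beta)$. For $(2)\Leftrightarrow(3)$ there is literally nothing to prove: this is exactly part~(2) of Lemma~\ref{L.digraphs}, which says $\fA_\alpha$ and $\fA_\beta$ have the same existential theory precisely when the same digraphs are represented in both.

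All the content is in $(2)\Rightarrow(1)$, and this is where rotariness enters. By Lemma~\ref{L.ReducedProduct}, each rotary structure $\fA_{\bar n}$ is a reduced product over $\Fin$ of the finite dynamical systems $\langle\mathcal P(n_j),r_{n_j}\rangle$, hence is countably saturated. Given $\ThE(\fA_\alpha)=\ThE(\fA_\beta)$, I would feed this into Lemma~\ref{L.embedding} twice — once with $(A,B)=(\fA_\alpha,\fA_\beta)$ and once with the roles swapped — to obtain embeddings in both directions, i.e. bi-embeddability. The one point that genuinely needs care — and the main obstacle — is verifying the hypotheses of Lemma~\ref{L.embedding}: it requires the target structure to be $\kappa$-saturated for $\kappa=|A|$, and here $|\fA_\alpha|=|\fA_\beta|=2^{\aleph_0}$, whereas Lemma~\ref{L.ReducedProduct} yields only $\aleph_1$-saturation. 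Under \ch{} the difficulty evaporates, since then $2^{\aleph_0}=\aleph_1$; in general what is needed is that a countably saturated reduced product over $\Fin$ of size $2^{\aleph_0}$ be $2^{\aleph_0}$-saturated (failing which, one can instead bypass Lemma~\ref{L.embedding} entirely by producing the two embeddings directly from divisibility data via Lemma~\ref{lem:RotaryEmbedding}). Modulo this calibration of saturation against cardinality, every step is routine.
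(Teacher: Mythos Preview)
Your argument is exactly the paper's: its entire proof is the single sentence ``Lemma~\ref{L.embedding}, together with Lemma~\ref{L.digraphs}, implies the following,'' which unpacks to precisely your cycle $(1)\Rightarrow(2)\Leftrightarrow(3)$ plus $(2)\Rightarrow(1)$ via saturation.

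You are right to flag the cardinality mismatch, and in fact the paper's one-line justification glosses over the very point you isolate: Lemma~\ref{L.embedding} requires the target to be $\kappa$-saturated for $\kappa=|A|=2^{\aleph_0}$, whereas Lemma~\ref{L.ReducedProduct} delivers only $\aleph_1$-saturation. Under \ch{} this is harmless, and the paper may well be tacitly assuming it here; without \ch{} neither your argument nor the paper's is complete as stated. Your proposed bypass via Lemma~\ref{lem:RotaryEmbedding} does not obviously work either: equality of existential theories does not by itself produce the finite-to-one almost surjection with the divisibility condition $m_{f(j)}\mid n_j$ that Lemma~\ref{lem:RotaryEmbedding} demands, so this route would need a separate argument extracting such combinatorial data from $\ThE(\fA_{\bar m})=\ThE(\fA_{\bar n})$.
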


\section{Uniform Roe Algebras and Coronas} \label{S.uRc}
Theorem~\ref{T.uRQ} below answers \cite[Question~9.5]{braga2018uniformRoe}.  
We briefly state the pertinent definitions; the reader is invited to look up \cite{RoeBook}, \cite{willett2020higher}, or the introduction to \cite{baudier2022uniform} for additional details, intuition, and background. 
\subsection*{Coarse equivalence} A map between metric spaces $f\colon X\to Y$ is \emph{coarse} if for every $m$ there is $n$ such that $d(x,x’)\leq m$ implies $d(f(x),f(x’))\leq n$. Two metric spaces $X$ and $Y$ are said to be \emph{coarsely equivalent} if there are coarse maps $f\colon X\to Y$ and $g\colon Y\to X$ such that both $\sup_{x\in X} d(x,g(f(x))$ and $\sup_{y\in Y} d(y,f(g(y))$ are finite. For example, $\bbR$ and $\bbZ$ are coarsely equivalent, every bounded metric space is coarsely equivalent to a point, but  $\bbR^n$ for $n\geq 1$ are coarsely inequivalent. A metric space is \emph{uniformly locally finite} if for every $n$ the supremum of the cardinalities of $n$-balls is finite. Arguably the most important class of examples of uniformly locally finite metric spaces  is given by the graph distance on Cayley graphs on finitely generated groups. 
Every uniformly locally finite space $X$ is  clearly countable. Also, if $X$ is any countable set then  the complex Hilbert space $\ell_2(X)$ with orthonormal basis $\{\delta_x\mid x\in X\}$ is separable.

For an almost permutation $\gamma$ of $\bbN$ define a metric $d_\gamma$ on $\bbN$ as follows.  Let $(\bbN,E_\gamma)$ be the graph with $\bbN$ as the set of vertices such that $i$ and $j$ are adjacent if and only if $\gamma(i)=j$ or $\gamma(j)=i$. Then $d_\gamma$ is the graph distance on $\bbN$. The size of an $n$-ball in $X_\gamma$ is at most $2n+1$, hence $X_\gamma$ is uniformly locally finite.   If $\gamma$ is a rotary automorphism, then points in different orbits are at infinite distance. Such orbits are called `coarse components’ and we now describe a standard way to fix this issue. 
If $J_n$, for $n\in \bbN$, are orbits of $\gamma$, then if $i\in J_m$ and $j\in J_n$ with $m<n$ then we set 
\begin{equation}\label{eq.d-gamma}
d_\gamma(i,j) = \max(n, \textstyle\max_{k<n}\diam(J_{k})) .
\end{equation} 
The triangle inequality is easily verified.

\begin{lemma}
If $\gamma$ is an almost permutation such that all of its orbits $X_n$ are finite, then the metric space $(\bigsqcup_n X_n,d_\gamma)$ is uniformly locally finite. 
\end{lemma}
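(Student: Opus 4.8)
The plan is to bound, for each radius $r$, the cardinality of every $d_\gamma$-ball of radius $r$ by a single constant $N_r$ depending only on $r$. Fix the enumeration $X_0, X_1, X_2, \dots$ of the orbits of $\gamma$ used in \eqref{eq.d-gamma}. I would first record two elementary observations. (i) Inside a single orbit $X_n$, the metric $d_\gamma$ is the graph distance on a graph in which every vertex has degree at most $2$, since $i$ is adjacent only to $\gamma(i)$ and $\gamma^{-1}(i)$ when these are defined; such a connected finite graph is a path or a cycle, so every $d_\gamma$-ball of radius $r$ contained in $X_n$ has at most $2r+1$ elements, regardless of $|X_n|$. (ii) By \eqref{eq.d-gamma}, if $i \in X_m$, $j \in X_n$ and $m < n$, then $d_\gamma(i,j) \geq n$; in particular, for any two points lying in distinct orbits $X_p$ and $X_q$ one has $d_\gamma \geq \max(p,q)$. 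Note also that each $\diam(X_n)$ is finite because $X_n$ is a finite orbit, so the right-hand side of \eqref{eq.d-gamma} is a well-defined natural number.

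Next I would fix $r$ and a point $x$, say $x \in X_m$, and split into two cases. If $m \leq r$: for any $y \in B(x,r)$ that does not lie in $X_m$, say $y \in X_n$ with $n \neq m$, observation (ii) applied to the pair $\{m,n\}$ forces $\max(m,n) \leq r$, hence $n \leq r$; combined with $m \leq r$ this gives $B(x,r) \subseteq X_0 \cup \dots \cup X_r$, a set of cardinality at most $N_r := \sum_{n=0}^{r}|X_n| < \infty$. If $m > r$: for any $y \in X_n$ with $n \neq m$, observation (ii) gives $d_\gamma(x,y) \geq \max(m,n) > r$, so $B(x,r) \subseteq X_m$, whence $|B(x,r)| \leq 2r+1$ by observation (i).

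Putting the cases together, $\sup_{x} |B(x,r)| \leq \max(N_r,\, 2r+1)$, which is finite for every $r$; this is precisely the assertion that $(\bigsqcup_n X_n, d_\gamma)$ is uniformly locally finite. I do not expect a genuine obstacle here: the argument is essentially bookkeeping with \eqref{eq.d-gamma}. The only two points needing a sentence of justification are the finiteness of $\diam(X_n)$ (immediate from finiteness of the orbit) and the degree-$\leq 2$ remark underpinning the $2r+1$ bound inside an orbit; the only mild care needed is in getting the case split on $m$ versus $r$ exactly right, so that a point in a high-index orbit is shown to have all of its close neighbours inside its own orbit.
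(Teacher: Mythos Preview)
Your proof is correct and follows essentially the same approach as the paper's: both split on whether the orbit index $m$ of the center point is at most $r$ (bounding the ball by the finite union $X_0\cup\dots\cup X_r$) or greater than $r$ (bounding the ball by $2r+1$ inside the single orbit $X_m$), yielding the same final estimate $\max\bigl(\sum_{n\le r}|X_n|,\,2r+1\bigr)$. Your write-up is slightly more explicit in justifying the $2r+1$ bound via the degree-$\le 2$ structure of the orbit graph, but the argument is otherwise identical.
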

\begin{proof}
 Writing $B_m(j)$ for the $m$-ball of $j\in X$, we need to show that $\sup_j |B_m(j)|$ is finite for all $m\geq 1$. Fix $m$. 
  Then for $j\in \bigcup_{k\leq m} J_k$ we have $B_m(j)\subseteq \bigcup_{k\leq m } J_k$, which is a finite  union of finite sets. For $j\in \bigcup_{k>m} J_k$ let $n$ be such that $j\in J_n$. 
  Then, using  \eqref{eq.d-gamma} for the first equality, we have 
 \[
 B_m(j)=B_m(j)\cap J_n=\{\gamma^k(j)\mid -m\leq k\leq m\}, 
 \]
 hence $|B_m(j)|\leq 2m+1$. Therefore $\sup_j |B_m(j)|\leq \max(2m+1,|\bigcup_{k\leq m} J_k|)$ is finite for all $j$. 
\end{proof}

We include a proof of the following lemma (although it ought to be obvious to the experts) for reader’s convenience.\footnote{An analogous remark applies to the previous lemma.}

\begin{lemma} \label{L.coarse.rotary} If $X_{\bar m}$ and $X_{\bar n}$ are coarse spaces associated to rotary permutations of~$\bbN$,  then the following are equivalent. 
	\begin{enumerate}
		\item \label{1.coarse.rotary} $X_{\bar m}$ and $X_{\bar n}$ are coarsely equivalent. 
		\item \label{2.coarse.rotary} There are $0<K<\infty$ and an almost permutation $\gamma$ of $\bbN$ such that for all but finitely many $i$ we have 
		\[
		\frac 1K n_{\gamma(i)}\leq m_i \leq K n_{\gamma(i)}. 
		\]
	\end{enumerate}
\end{lemma}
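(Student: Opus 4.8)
The plan is to prove the equivalence of the two conditions in Lemma~\ref{L.coarse.rotary} by direct construction, treating $(\ref{2.coarse.rotary}) \Rightarrow (\ref{1.coarse.rotary})$ as the easy direction and $(\ref{1.coarse.rotary}) \Rightarrow (\ref{2.coarse.rotary})$ as the substantive one. Throughout, write $\bar m = \seq{m_i}{i \in \w}$ and $\bar n = \seq{n_j}{j \in \w}$ for the sequences of cycle lengths, let $I_i$ and $J_j$ denote the corresponding orbits (of sizes $m_i$ and $n_j$), and recall that $\diam(I_i) = \lfloor m_i/2 \rfloor$ within each orbit, so that up to a factor of $2$ the intra-orbit diameter is just the cycle length.

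\textbf{The direction $(\ref{2.coarse.rotary}) \Rightarrow (\ref{1.coarse.rotary})$.} Given $K$ and the almost permutation $\gamma$ with $\frac1K n_{\gamma(i)} \le m_i \le K n_{\gamma(i)}$ for all but finitely many $i$, I would build a coarse equivalence $X_{\bar m} \to X_{\bar n}$ orbit by orbit: on the orbit $I_i$, map it into $J_{\gamma(i)}$ by a map that is as close to a ``uniform wrapping'' as possible (send $I_i$ around $J_{\gamma(i)}$ the appropriate number of times, reusing the construction of $e$ from the proof of Lemma~\ref{lem:RotaryEmbedding}). The comparability $m_i \asymp n_{\gamma(i)}$ ensures that points at distance $\le R$ inside $I_i$ go to points at distance $\le CR$ inside $J_{\gamma(i)}$ for a constant $C = C(K,R)$; the inter-orbit distances defined by \eqref{eq.d-gamma} are controlled because $\gamma$ is a near-bijection of $\w$ and the $\diam$ terms appearing in \eqref{eq.d-gamma} only change by a bounded factor under $i \mapsto \gamma(i)$ (after rearranging at most finitely many orbits and absorbing the finite exceptional set into a bounded perturbation). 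Building the coarse inverse using $\gamma^{-1}$ is symmetric, and one checks $\sup_x d(x,g(f(x))) < \infty$ because on each orbit the composite is a self-map of a cycle moving points a bounded amount, while the finitely many orbits not covered contribute only a bounded error.

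\textbf{The direction $(\ref{1.coarse.rotary}) \Rightarrow (\ref{2.coarse.rotary})$.} This is where the work lies. Suppose $f : X_{\bar m} \to X_{\bar n}$ and $g : X_{\bar n} \to X_{\bar m}$ witness coarse equivalence, with control function $\rho$ (i.e.\ $d(x,x') \le r \Rightarrow d(f(x),f(x')) \le \rho(r)$) and $\sup_x d(x, g(f(x))) \le S$. The first step is to show $f$ almost respects the orbit decomposition: since each orbit is connected in the graph metric but distinct orbits are at distance $\ge $ something growing, a coarse map must send all but boundedly many points of a given orbit $I_i$ into a single orbit $J_{j}$ — once $m_i$ is large enough that $\mathrm{diam}(I_i) > $ the threshold past which \eqref{eq.d-gamma} forces distinct $J$'s to be far, $f(I_i)$ must lie in one $J_j$ (here one uses that $\diam(J_j)$ and the gap sizes in \eqref{eq.d-gamma} are comparable). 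This defines a partial map $\gamma : i \mapsto j$ on a cofinite set of indices. The second step is to show $\gamma$ is finite-to-one and ``almost onto'' using the coarse inverse $g$: if infinitely many $I_i$ mapped into the same $J_j$, then $g$ would have to spread $J_j$ back over infinitely many orbits, contradicting that $g$ is coarse (a connected bounded-diameter set cannot map coarsely onto an unbounded union of far-apart orbits); symmetrically the $J_j$ not in the image of $\gamma$ must be boundedly many, and by a cardinality/pigeonhole argument combined with $S$-closeness one upgrades $\gamma$ to an honest almost permutation of $\bbN$ after discarding finitely many orbits. The third step extracts the metric comparability: fixing $i$ with $\gamma(i) = j$, the restriction $f \restriction I_i$ is a map from a cycle of length $m_i$ into a cycle of length $n_j$ that is $\rho(1)$-Lipschitz on edges and whose image, together with $g \restriction J_j$, almost inverts it up to $S$. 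A covering/degree argument (the image of $f\restriction I_i$ must be $\rho(1)$-dense in $J_j$, else $g$ cannot recover the missing part; conversely $f\restriction I_i$ cannot collapse too much, else two far points of $I_i$ land $S$-close and $g$ returns them $S$-close, contradicting $d(x,x') \approx m_i/2$) yields $\frac{1}{K} n_j \le m_i \le K n_j$ with $K$ depending only on $\rho(1)$ and $S$.

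\textbf{Main obstacle.} The delicate point is the second step — promoting the raw index assignment $\gamma$ to a genuine almost permutation of $\bbN$, i.e.\ simultaneously finite-to-one and almost onto. The issue is that a priori $f$ could be wildly non-injective on orbits (wrapping many times) and the coarseness of $f$ alone does not bound how many orbits can pile onto one $J_j$; one genuinely needs to play $f$ and $g$ against each other, using that $g \circ f$ is $S$-close to the identity on $X_{\bar m}$, to see that the multiplicities are bounded, and then a careful bookkeeping (along the lines of an index/parity count, discarding a finite set of orbits) is required to realize $\gamma$ as a bijection between cofinite sets. I would isolate this as a standalone sublemma about coarse maps between disjoint unions of uniformly bounded ``clusters'' whose pairwise distances grow, prove it once, and then apply it in both steps two and three.
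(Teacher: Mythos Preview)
Your overall strategy matches the paper's: build the coarse equivalence orbit-by-orbit for $(\ref{2.coarse.rotary})\Rightarrow(\ref{1.coarse.rotary})$, and for $(\ref{1.coarse.rotary})\Rightarrow(\ref{2.coarse.rotary})$ first show that $f$ eventually sends each $I_i$ into a single $J_{\tilde f(i)}$, then extract both the almost-permutation and the size comparison. The argument is correct.

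However, you have misjudged where the work lies. What you flag as the ``main obstacle'' --- upgrading $\gamma$ to an almost permutation --- is in the paper a one-line observation: build $\tilde g$ symmetrically from $g$, and since $g\circ f$ and $f\circ g$ are within $S$ of the identity, one has $\tilde g(\tilde f(i))=i=\tilde f(\tilde g(i))$ for all but finitely many $i$; this immediately makes $\tilde f$ injective and surjective on cofinite sets. No pigeonhole bookkeeping or standalone sublemma is needed. Likewise, your ``covering/degree'' argument for Step~3 is heavier than necessary: the paper simply notes that a coarse equivalence between uniformly locally finite spaces has uniformly bounded point-preimages (if $f(x_1)=\dots=f(x_n)=y$ then all $x_k$ lie in $B_{2S}(g(y))$), so $L=\sup_y|f^{-1}(y)|<\infty$, and then $f[I_i]\subseteq J_{\tilde f(i)}$ gives $m_i\le L\,n_{\tilde f(i)}$ by pure counting; the reverse inequality comes from $g$. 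Your density argument via $g\circ f\approx\mathrm{id}$ amounts to the same thing once unwound, but the preimage-cardinality formulation is cleaner and avoids any appeal to the cycle geometry.
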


\begin{proof} \eqref{2.coarse.rotary} implies \eqref{1.coarse.rotary}: Fix $K$ and $\gamma$. 
	Let us for a moment assume that $m_i>n_{\gamma(i)}$  for all $i\in \dom(\gamma)$. For convenience of notation, we will identify the intervals $I_i$ as in the definition of rotary maps (Definition~\ref{def:RotaryMap})  with $m_i$ or $n_i$ as appropriate.  (With the understanding that the $m_i$’s the $n_i$’s denote pairwise disjoint sets.) These are the coarse components of $X_{\bar m}$ and $X_{\bar n}$. 
	For each $i$, let $k_i$ be such that $kn_{\gamma(i)}\leq m_i<(k_i+1)n_{\gamma(i)}$. Note that $\frac 1K \leq k_i\leq K$.  Split $m_i$ into $n_{\gamma(i)}$ intervals so that each one of them has cardinality $k_i$ or $k_i+1$.   Then let $f_i\colon m_i\to n_{\gamma(i)}$ collapse the $j$-th interval to $j$ and $g_i\colon n_{\gamma(i)}\to m_i$ send $j$ to the first point of the $j$th interval. Let $f$ and $g$ be the union of the $f_i$'s and of the $g_i$’s, respectively. (The domains of these functions are cofinite; extend them to the entire space arbitrarily.) Then $\frac 1K d(x,y)\leq  d(f(x),f(y))\leq d(x,y)$  and $\frac 1K d(x,y)\leq d(g(x),g(y))\leq K d(x,y)$, therefore both $f$ and $g$ are coarse. Since $f\circ g$ is the identity and $g\circ f$ is uniformly within the distance  at most $\sup_i k_i\leq K$ from the identity, $f$ and $g$ witness coarse equivalence of $X_{\bar m}$ and $X_{\bar n}$. 
	
	If the set of $i$ such that $m_i<n_{\gamma(i)}$ is nonempty, then reverse the construction on these intervals: split $n_{\gamma(i)}$ into $m_i$ intervals of the appropriate cardinality and define  $f_i$ and $g_i$ analogously to the first case. Then $f$ and $g$ obtained by taking the unions witness that $X_{\bar m}$ and $X_{\bar n}$ are coarsely equivalent.

	\eqref{1.coarse.rotary} implies \eqref{2.coarse.rotary}: Assume, towards contradiction,  that \eqref{1.coarse.rotary} holds and 
	\eqref{2.coarse.rotary} fails.  Let $f$ and $g$ be coarse maps such that for some $K$ we have  $d(x,f(g(x))\leq K$ and $d(x,g(f(x))\leq K$ for all $x$. Coarseness of $f$  and the fact that the distances between coarse components diverge to infinity together imply  that (using the same convenient abuse of notation as in the first part of the proof) $f[m_i] $ is included in a single coarse component for all but finitely many $i$; we denote this component $n_{\tilde f(i)}(s)$.  Similarly, $g[n_i]$ is included in a single coarse component, denoted $m_{\tilde g(i)}$, for all but finitely many $i$. Since $f\circ g$ and $g\circ f$ are close to the identity map, for all but finitely many $i$ we have that $\tilde f(\tilde g(i))=i=\tilde g(\tilde f(i)$, thus $\tilde f$ and $\tilde g$ are almost permutations. 
	On the other hand, because $f$ is coarse we have that  
	\[
	L=\max(\sup_y |f^{-1}(y)|,\sup_x |g^{-1}(x)|)<\infty. 
	\]
	However, our assumption on $\bar m$ and $\bar n$ implies that $m_i/n_i\notin (1/L,L)$ for all but finitely many $i$; contradiction.  
\end{proof}

\subsection*{Uniform Roe algebras and coronas}
Let $X_\gamma=(\bbN,d_\gamma)$. The uniform Roe algebra $\cstu(X_\gamma)$ and the uniform Roe corona $\roeq(X_\gamma)$ are  defined as follows.  

On $\ell_2(\bbN)$ we have the orthonormal basis $\delta_i$, for $i\in \bbN$, and $( \xi|\eta)$ denotes the inner product of vectors $\xi,\eta$.  An operator $T$ in $\cB(\ell_2(\bbN))$, the algebra of bounded linear operators on $\ell_2(\N)$, has $\gamma$-propagation\footnote{This terminology is nonstandard;  one normally considers $\ell_2(X_\gamma)$ in place  of $\ell_2(\bbN)$  and talks about propagation instead of $\gamma$-propagation.} $m$ if for all $i$ and $j$ in $\bbN$ such that $d_\gamma(i,j)>m$ we have $\langle T\delta_i|\delta_j\rangle=0$. 

The \emph{algebraic uniform Roe algebra} $\cstu[X_\gamma]$ is the algebra of all operators of finite propagation. Its norm closure is the \emph{uniform Roe algebra} $\cstu(X_\gamma)$. 

Then $\cstu(X_\gamma)$ includes $\ell_\infty(\bbN)$ (these are the operators of propagation zero) and the ideal of compact operators, $\cK(\ell_2(\bbN))$. 
The \emph{uniform Roe corona} is the quotient 
\[
\roeq(X_\gamma)=\cstu(X_\gamma)/\cK(\ell_2(\bbN))
\]
(or in our case, if $\cK(\ell_2(\bbN)) \not\subseteq \cstu(X_\gamma)$ then $\roeq(X_\gamma)=\cstu(X_\gamma)/(\cstu(X_\gamma)\cap\cK(\ell_2(\bbN))$).

It is not difficult to prove that if $X$ and $Y$ are coarsely equivalent uniformly locally finite metric spaces, then $\cstu(X)$ and $\cstu(Y)$ are Morita equivalent (for unital \cstar-algebras this is equivalent to being \emph{stably isomorphic}, i.e., isomorphic after taking tensor product with the algebra of compact operators). The question whether  the converse holds  generated considerable activity in the past fifteen years, culminating in a positive answer given in \cite{baudier2022uniform}. We are interested in an even stronger rigidity problem.  In \cite{braga2018uniformRoe} and \cite[Theorem~1.5]{baudier2022uniform} it was proven that forcing axioms imply that for uniformly locally finite metric spaces the isomorphism of uniform Roe coronas implies coarse equivalence. The key set-theoretic result in these proofs is \cite[Theorem~9.4]{mckenney2018forcing}, asserting that every *-homomorphism from $\ell_\infty(\bbN)/c_0(\bbN)$ into the Calkin algebra has an algebraic lifting (see  the proof of Theorem~2.7 in \cite{braga2018uniformRoe}). While it is well-known and easy to prove that this lifting property fails under \ch{}, all attempts to use \ch{} to prove a non-rigidity result for uniform Roe coronas failed. The closest result was  \cite[Theorem~8.1]{braga2018uniformRoe} where it was proven  that there are locally finite metric spaces such that their Roe coronas being isomorphic is independent from \ch. More precisely, this was shown to follow directly from \cite{ghasemi2016reduced} and \cite{Gha:FDD}; our Lemma~\ref{L.subsequence}	is based on one of the ideas of the former paper. 

 Theorem~\ref{T.uRQ} below improves \cite[Theorem~8.1]{braga2018uniformRoe} where an analogous statement had been proven (or rather, shown to follow readily from \cite{ghasemi2016reduced} and \cite{Gha:FDD}) for locally finite (but not uniformly locally finite) spaces.

\begin{theorem}\label{T.uRQ}
	There are uniformly locally finite metric spaces $X$ and $Y$ such that the assertion $\roeq(X)\cong \roeq(Y)$ is independent from $\ZFC$.  
\end{theorem}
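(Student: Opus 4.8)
The plan is to realize $X$ and $Y$ as the coarse spaces $X_{\bar a}$, $X_{\bar b}$ attached to rotary permutations $r_{\bar a}$, $r_{\bar b}$ of $\bbN$ equipped with the component‑linking metric of \eqref{eq.d-gamma}, taken from the family of Corollary~\ref{C.many.nonconjugate}, and to play the \ch{}‑dynamics of $\alpha_{\bar a},\alpha_{\bar b}$ against the forcing‑axiom rigidity theory of uniform Roe coronas. The bridge between the two sides is an identification of the corona of a rotary coarse space as a crossed product. For a rotary permutation $r_{\bar n}$ the translation operator $u_{\bar n}\colon\delta_i\mapsto\delta_{r_{\bar n}(i)}$ has $d_{\bar n}$‑propagation $1$, and an operator of propagation $\le m$ can only connect the finitely many coarse components of index $\lesssim m$ and is a cyclic band‑$m$ operator on the remaining components; one checks from this that $\cstu(X_{\bar n})$ is the C$^*$‑algebra generated by $\ell_\infty(\bbN)$, $u_{\bar n}$ and $\cK(\ell_2(\bbN))$. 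Passing to the corona, the diagonal (propagation‑zero) conditional expectation descends to a conditional expectation onto $\ell_\infty(\bbN)/c_0(\bbN)=C(\bbN^*)$ which is \emph{faithful} — faithfulness uses that for a uniformly band‑decaying operator $T$, $\sup_i\|T\delta_i\|\to 0$ along a coarse component forces $\|T\|\to 0$ there — so, as $\bbZ$ is amenable, $\roeq(X_{\bar n})\cong C(\bbN^*)\rtimes_{\alpha_{\bar n}}\bbZ$. Since the projection lattice of $C(\bbN^*)$ is $\pnmf$ and $\alpha_{\bar n}$ is the corresponding automorphism, this crossed product is, up to $*$‑isomorphism, an invariant of the conjugacy class of the dynamical system $\fA_{\bar n}$.

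For the \ch{} direction I would take $\mathcal G=\{\alpha_{\bar m_f}:f\in\mathcal F\}$ as produced in the proof of Corollary~\ref{C.many.nonconjugate}, but (as that proof allows) built from a base sequence with $n_{j+1}/n_j\to\infty$; then each $\bar m_f$ is the increasing enumeration of $\{n_j:j\in S_f\}$ for pairwise almost‑disjoint infinite sets $S_f\subseteq\bbN$, and each $\bar m_f$ grows super‑exponentially. Fix $f\ne f'$ and put $X=X_{\bar m_f}$, $Y=X_{\bar m_{f'}}$; these are uniformly locally finite since the orbits of the rotary permutations are finite. Under \ch, item \eqref{1.many.conjugate} of Corollary~\ref{C.many.nonconjugate}, applied with the whole space as the invariant set, gives that $\alpha_{\bar m_f}$ and $\alpha_{\bar m_{f'}}$ are conjugate, so there is an automorphism $\theta$ of $\pnmf$ with $\theta\circ\alpha_{\bar m_f}=\alpha_{\bar m_{f'}}\circ\theta$. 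As a Boolean‑algebra automorphism of $\pnmf$ is a homeomorphism of $\bbN^*$ and hence a $*$‑automorphism of $C(\bbN^*)$, $\theta$ extends to an equivariant $*$‑isomorphism of $C(\bbN^*)$ and therefore to a $*$‑isomorphism of the crossed products. By the first paragraph, $\roeq(X)\cong\roeq(Y)$; in particular this is consistent with $\ZFC$.

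For the forcing‑axiom direction I would work under $\OCAT+\MA$. By the rigidity theorem of \cite{braga2018uniformRoe} — which relies on the lifting theorem \cite[Theorem~9.4]{mckenney2018forcing} — as improved in \cite[Theorem~1.5]{baudier2022uniform}, an isomorphism of the uniform Roe coronas of uniformly locally finite metric spaces forces the spaces to be coarsely equivalent. So $\roeq(X)\cong\roeq(Y)$ would give that $X_{\bar m_f}$ and $X_{\bar m_{f'}}$ are coarsely equivalent, and Lemma~\ref{L.coarse.rotary} would then produce $0<K<\infty$ and an almost permutation $\eta$ of $\bbN$ with $\frac1K(m_f)_{\eta(i)}\le(m_{f'})_i\le K(m_f)_{\eta(i)}$ for all but finitely many $i$. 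Writing $S_f=\{s^f_0<s^f_1<\cdots\}$ so that $(m_f)_i=n_{s^f_i}$, and using that super‑exponential growth of $(n_j)$ makes $n_p/n_q\in[1/K,K]$ force $q=p$ once $p$ is large, the displayed inequalities give $s^{f'}_i=s^f_{\eta(i)}$ for all large $i$, hence $S_{f'}\subseteq^*S_f$, contradicting $|S_f\cap S_{f'}|<\aleph_0$. Thus $\roeq(X)\not\cong\roeq(Y)$ under $\OCAT+\MA$, which is also consistent with $\ZFC$; combining the two directions, $\roeq(X)\cong\roeq(Y)$ is independent of $\ZFC$.

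The hardest part is the first step: verifying that $\roeq(X_{\bar n})$ is genuinely the crossed product $C(\bbN^*)\rtimes_{\alpha_{\bar n}}\bbZ$ rather than merely a quotient of it — which comes down to checking faithfulness of the induced conditional expectation — since this is precisely what transports the dynamical input of Corollary~\ref{C.many.nonconjugate} to the C$^*$‑algebraic statement. The combinatorial coarse non‑equivalence of the $X_{\bar m_f}$ and the appeal to the Braga–Farah rigidity theorem are comparatively routine.
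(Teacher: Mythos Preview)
Your proposal is correct and reaches the same conclusion as the paper, but the bridge from dynamics to coronas is built differently. The paper proves directly (Lemma~\ref{L.tildePhi}) that a conjugacy $\Phi$ between $\alpha_\gamma$ and $\alpha_{\gamma'}$ induces an isomorphism $\tilde\Phi\colon\roeq(X_\gamma)\to\roeq(X_{\gamma'})$: it writes every element of the algebraic corona as a finite sum $\sum_k \dot v_\gamma^k\,\dot E(\dot T\dot v_\gamma^{k})$, defines $\tilde\Phi$ by replacing $\dot v_\gamma$ with $\dot v_{\gamma'}$ and applying $\Phi$ to the diagonal coefficients, and then checks by hand that this is a $*$-homomorphism and an isometry on a dense subalgebra. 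Your route—identify $\roeq(X_{\bar n})\cong C(\bbN^*)\rtimes_{\alpha_{\bar n}}\bbZ$ and then invoke the trivial fact that conjugate actions give isomorphic crossed products—is more conceptual and makes the \ch{} direction immediate once the identification is in place. The price is that the crossed-product identification uses $n_j\to\infty$ in an essential way (so that $\dot E(\dot u_{\bar n}^k)=0$ for $k\neq 0$), whereas Lemma~\ref{L.tildePhi} is stated for arbitrary almost permutations.

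One remark on what you flag as the hard step. You frame the issue as proving faithfulness of the induced expectation $\dot E$ on $\roeq(X_{\bar n})$, but you can sidestep this entirely. You have a surjection $\pi\colon C(\bbN^*)\rtimes\bbZ\to\roeq(X_{\bar n})$ (surjectivity follows from your observation that $\cstu(X_{\bar n})=C^*(\ell_\infty,u_{\bar n})+\cK$). Since $\bbZ$ is amenable the canonical expectation $E_{\rm cp}$ on the crossed product is faithful, and your computation that $\dot E(\dot u_{\bar n}^k a)=0$ for $k\neq0$ (using $n_j\to\infty$) gives $\dot E\circ\pi=E_{\rm cp}$; hence $\pi$ is injective. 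So faithfulness of $\dot E$ is a \emph{consequence} of the identification, not a prerequisite. The forcing-axiom half and the coarse non-equivalence argument via super-exponential growth match the paper's Theorem~\ref{T.uRQ+} essentially verbatim.
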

We will prove a finer result in Theorem~\ref{T.uRQ+} at the end of this section. More precisely, we will show that this result is a reformulation of    Corollary~\ref{C.many.nonconjugate} (modulo some  not completely trivial work). 

If $\gamma$ is an almost permutation of $\bbN$, then  $\alpha_\gamma$ denotes the trivial automorphism of $\pnmf$ associated with $\gamma$, to wit
 \[
\alpha_\gamma ([A]_{\Fin})=\alpha ([\gamma[A]]_{\Fin}). 
\] 
Since by the Stone and Gelfand--Naimark dualities (see \cite[Theorem~1.3.2]{Fa:STCstar}) every automorphism of $\pnmf$ uniquely extends to an automorphism of $\ell_\infty(\bbN)/c_0(\bbN)$, we will also denote this automorphism by $\alpha_\gamma$

The conditional expectation from $\cB(\ell_2)$ onto $\ell_\infty$ (that is, a unital completely positive map equal to the identity on its range, see \cite[\S 3.3]{Fa:STCstar}) is defined by 
\[
E(T)=\sum_{j} (T\delta_j|\delta_j)\delta_j. 
\]
Since $E$ sends compacts to compacts, it also defines a conditional expectation, denoted $\dot E$, from $\cQ(\ell_2)$ onto $\ell_\infty/c_0$ by 
($\pi\colon \ell_\infty(\bbN)\to \ell_\infty(\bbN)/c_0(\bbN)$ denotes the quotient map)
\[
\dot E(\dot T)=\pi(E(T)). 
\]
If $\gamma$ is a bijection between subsets of $\bbN$ then let $v_\gamma$ denote the partial isometry of $\ell_2(\bbN)$ such that 
\[
v_\gamma(\delta_i)=\delta_{\gamma(i)}
\] 
for all $i\in \dom(\gamma)$ and $v_\gamma(\delta_i)=0$ otherwise.

 The following is a special case of a lemma that can be found in \cite{RoeBook}, but we include a proof for reader's convenience. 
 
\begin{lemma} \label{L.Propagation}
		Suppose that $\gamma$ is an almost permutation of $\bbN$.  
		
		\begin{enumerate}
			\item 	
			An operator $T\in \cB(\ell_2)$ has $\gamma$-propagation $\leq m$ if and only if 
			$E(T v_\gamma^k)=0$ for all $k$ such that $|k|\geq m$. 
			\item $T\in \cstu[X_\gamma]$ if and only if 
	$E (T v_\gamma^{k})=0$ for all but finitely many $k$ 
	\item Every $T\in \cstu[X_\gamma]$  can be presented as 
	\[
	T=\sum_{k\in \bbZ} v_\gamma^k E (T v_\gamma^k)
	\]
	where only finitely many of the terms in the sum on the right-hand side are nonzero. 
			\end{enumerate}
\end{lemma}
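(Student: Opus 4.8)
The plan is to turn all three parts into bookkeeping with matrix coefficients. Write $S_{ij}=(S\delta_j|\delta_i)$ for $S\in\cB(\ell_2)$, so that $E(S)$ is the diagonal part of $S$ and, by definition, $S$ has $\gamma$-propagation $\le m$ exactly when $S_{ij}=0$ whenever $d_\gamma(i,j)>m$. The identity driving everything is that $v_\gamma^k\delta_i=\delta_{\gamma^k(i)}$ when $\gamma^k(i)$ is defined and $v_\gamma^k\delta_i=0$ otherwise, valid for every $k\in\bbZ$ once one reads $v_\gamma^{-1}=v_\gamma^{*}$; it is immediate from the definition of $v_\gamma$. Hence for $i\in\dom(\gamma^k)$ one gets $(Tv_\gamma^k)_{ii}=(T\delta_{\gamma^k(i)}|\delta_i)=T_{i,\gamma^k(i)}$, and $(Tv_\gamma^k)_{ii}=0$ otherwise, so $E(Tv_\gamma^k)$ is the diagonal operator with those entries; equivalently, $v_\gamma^k E(Tv_\gamma^k)$ is the operator whose matrix agrees with that of $T$ at every position $(\gamma^k(i),i)$, $i\in\dom\gamma^k$, and is $0$ elsewhere. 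In particular $E(Tv_\gamma^k)=0$ if and only if $T$ vanishes at all those positions.

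To prove (1) I would pair this with the geometry of $X_\gamma$: two vertices lie at finite $d_\gamma$-distance precisely when they share a $\gamma$-orbit, and then a geodesic in the graph $(\bbN,E_\gamma)$ is a run of $\gamma^{\pm1}$-steps, so $d_\gamma(i,j)=\min\{|k|:\gamma^k(i)=j\}$. Thus a matrix position $(a,b)$ has $d_\gamma(a,b)\le m$ if and only if $a=\gamma^k(b)$ for some $k$ with $|k|\le m$, and it has $d_\gamma(a,b)>m$ otherwise (in particular whenever $a$ and $b$ lie in different orbits). Combining the two paragraphs, $T$ has $\gamma$-propagation $\le m$ if and only if the matrix of $T$ is supported on $\bigcup_{|k|\le m}\{(\gamma^k(i),i):i\in\dom\gamma^k\}$, if and only if $E(Tv_\gamma^k)=0$ for every $k$ with $|k|>m$. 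This is (1), and (2) is the immediate consequence: $T\in\cstu[X_\gamma]$ means $T$ has finite $\gamma$-propagation, i.e.\ $\gamma$-propagation $\le m$ for some $m$, which by (1) says exactly that $E(Tv_\gamma^k)=0$ for all but finitely many $k$.

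For (3) I would fix $T\in\cstu[X_\gamma]$ of $\gamma$-propagation $\le m$. The operators $v_\gamma^k E(Tv_\gamma^k)$ for $|k|\le m$ carry disjoint batches of matrix entries of $T$, and by the support description above these batches together account for every nonzero entry of $T$ exactly once; hence $\sum_{|k|\le m}v_\gamma^k E(Tv_\gamma^k)$ has the same matrix as $T$, and, both being bounded operators, they coincide. By (2) every term with $|k|>m$ vanishes, so $T=\sum_{k\in\bbZ}v_\gamma^k E(Tv_\gamma^k)$ with only finitely many nonzero summands, which is (3).

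The one step I expect to require genuine care, the rest being routine matrix juggling, is the claim used in both (1) and (3) that the ``diagonal batch'' indexed by $k$ is recovered cleanly and that distinct $k$'s give disjoint batches. This is transparent when every $\gamma$-orbit meeting the support of $T$ is infinite, since then $\gamma^k(i)=j$ determines $k$ uniquely; for a point lying in a finite $\gamma$-orbit of size $n$ the equation only pins $k$ down modulo $n$, so one should really index the decomposition by the finitely many partial translations of $X_\gamma$ of displacement $\le m$ (equivalently, by the $k$ of least absolute value) rather than naively by $\bbZ$. Once this convention is in place the matrix computations above go through unchanged, and the resulting identities are precisely the instances for $X_\gamma$ of the general Roe-algebra lemma of \cite{RoeBook}.
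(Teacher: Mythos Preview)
Your approach is essentially the same as the paper's: both compute the matrix entries of $v_\gamma^k E(Tv_\gamma^k)$ (the paper via the rank-one operator notation $\xi\odot\eta$, you via direct matrix coefficients) and read off the three parts from the resulting formula. The paper's argument is terser, ending after the computation $v_\gamma^k E(Tv_\gamma^k)=\sum_i (T\delta_{\gamma^k(i)}\mid\delta_i)\,\delta_{\gamma^k(i)}\odot\delta_i$ with the remark that this vanishes once $|k|$ exceeds the propagation of $T$.

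You are in fact more careful than the paper on one point: the finite-orbit periodicity. Your observation that on a finite orbit of size $n$ the map $k\mapsto\gamma^k(i)$ only determines $k$ modulo $n$ is correct, and it really does affect the literal truth of (1) and (3). For example, if $\gamma$ has an orbit of size $n$ and $T$ is the identity, then $E(Tv_\gamma^{n})\neq 0$ even though $T$ has propagation $0$; and in (3) the sum $\sum_{|k|\le m}v_\gamma^k E(Tv_\gamma^k)$ overcounts entries supported on orbits of size $\le 2m$. The paper's short proof does not address this. Your proposed fix---indexing by the finitely many partial translations of displacement $\le m$, equivalently by the $k$ of least absolute value---is the standard way to make the statements precise, and it is exactly what the general Roe-algebra version in \cite{RoeBook} does. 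In the paper's actual application the orbit sizes tend to infinity, so the discrepancy is confined to finitely many orbits and hence is finite-rank, vanishing in the corona; this is presumably why the authors did not dwell on it. You might also note explicitly that the lemma's ``$|k|\ge m$'' should read ``$|k|>m$'', as you have silently corrected.
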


\begin{proof} We prove all three parts simultaneously. 
	As in \cite[\S 3.3.3]{Pede:Analysis}, for $\xi$ and $\eta$ in $\ell_2$ let $\xi\odot \eta$ be the operator defined by 
	\[
	(\xi\odot \eta)(\zeta)=(\zeta|\eta)\xi. 
	\]
	If $\|\xi\|_2=\|\eta\|_2=1$ then this is a rank one partial isometry that sends $\eta$ to $\xi$. 

We will also write $p_i=\delta_i\odot \delta_i$. This is the rank-one projection to the span of $\delta_i$. 
		For $i,k\in \bbN$ we have 
		$
		(Tv_\gamma^k \delta_i|\delta_i)=(T\delta_{\gamma^k(i)}|\delta_i)$, hence 
		$E(T v_\gamma^k)=\sum_j (T\delta_{\gamma^k(i)}|\delta_i)p_i$. Since $v_\gamma^k p_i=\delta_{\gamma^k(i)} \odot \delta_i$, we have
		\[
		v_\gamma^k E(Tv_\gamma^{k}))=\sum_i (T\delta_{\gamma^{k}(i)}|\delta_i)\delta_{\gamma^k(i)}\odot \delta_i. 
		\]
		The sum on the right-hand side is clearly zero if $k$ is greater than the propagation of $T$, and this completes the proof. 
\end{proof}

\begin{lemma} \label{L.tildePhi} If $\gamma$ and $\gamma'$ are almost permutations of $\bbN$ such that the  associated automorphisms of $\pnmf$ are conjugate by an automorphism of $\pnmf$,  then $\roeq(X_\gamma)\cong \roeq(X_{\gamma'})$. 
\end{lemma}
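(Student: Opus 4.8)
The plan is to lift the conjugacy to the level of uniform Roe coronas by combining the Fourier expansion of Lemma~\ref{L.Propagation} with the observation that the ``periodic--orbit projections'' of $\gamma$ are definable from $\alpha_\gamma$, so that an \emph{arbitrary} (possibly nontrivial) conjugacy transports them correctly. By the Stone and Gelfand--Naimark dualities (recalled just before the lemma), the conjugacy $\Phi$ extends to an automorphism $\liftphi$ of $B:=\ell_\infty(\bbN)/c_0(\bbN)$ with $\liftphi\circ\alpha_\gamma=\alpha_{\gamma'}\circ\liftphi$. Inside $\roeq(X_\gamma)$ we have: the copy of $B$ that is the range of $\dot E$; the unitary $u_\gamma:=\dot v_\gamma$ (unitary because $\gamma$ is a bijection of cofinite sets, so $v_\gamma^{*}v_\gamma$ and $v_\gamma v_\gamma^{*}$ are both $\equiv 1$ mod $\cK$); the relation $u_\gamma\dot a\,u_\gamma^{*}=\alpha_\gamma(\dot a)$ for $\dot a\in B$; and, by Lemma~\ref{L.Propagation}, every $T\in\cstu[X_\gamma]$ is a finite sum $\sum_k v_\gamma^{k}a_k$ with $a_k\in\ell_\infty$, so that $\calR_\gamma:=\{\dot T:T\in\cstu[X_\gamma]\}$ is a dense $*$-subalgebra whose underlying linear space is spanned by $\{u_\gamma^{k}\dot a:k\in\bbZ,\ \dot a\in B\}$. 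A direct computation gives $\dot E(u_\gamma^{k}\dot a)=\dot a\cdot\dot p_k^\gamma$, where $\dot p_k^\gamma$ is the image in $B$ of the projection onto $\ell_2(\mathrm{Fix}(\gamma^{k}))$ (so $\dot p_0^\gamma=1$), and $\dot E$ is a \emph{faithful} conditional expectation: if $\dot E(\dot x^{*}\dot x)=0$ for a finite--propagation representative $T$ of $\dot x$, then $E(T^{*}T)\in c_0$, which by uniform local finiteness forces $T$ compact (faithfulness on all of $\roeq(X_\gamma)$ is standard for Roe algebras, cf.\ \cite{RoeBook}).

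The key point is that $\dot p_k^\gamma$ transforms correctly under $\liftphi$. I would observe that $[\mathrm{Fix}(\gamma^{k})]_{\mathrm{Fin}}$ is definable from $\alpha_\gamma$, namely as the largest $c\in\pnmf$ with $\alpha_\gamma^{k}(x)=x$ for every $x\leq c$ (one checks that $c\!\downarrow$ is fixed pointwise by $\alpha_\gamma^{k}$ iff the corresponding subset of $\bbN$ is, modulo finite, contained in $\mathrm{Fix}(\gamma^{k})$). Since $\Phi$ is an isomorphism of dynamical systems $\fA_{\alpha_\gamma}\to\fA_{\alpha_{\gamma'}}$, it maps this element to the analogous one for $\gamma'$; hence $\liftphi(\dot p_k^\gamma)=\dot p_k^{\gamma'}$ for all $k$.

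Next I would define $\Phi_0\colon\calR_\gamma\to\roeq(X_{\gamma'})$ on the spanning set by $\Phi_0(u_\gamma^{k}\dot a)=u_{\gamma'}^{k}\liftphi(\dot a)$, extended linearly, and verify it is a well-defined $*$-homomorphism. Well-definedness is the crucial step: a finite sum $\sum_k u_\gamma^{k}\dot a_k$ equals $0$ iff $\dot E\big(\,(\sum_k u_\gamma^{k}\dot a_k)\,u_\gamma^{-m}\big)=0$ for every $m$, iff (using $\dot E(u_\gamma^{k-m}\alpha_\gamma^{m}(\dot a_k))=\alpha_\gamma^{m}(\dot a_k)\dot p^\gamma_{k-m}$, $\alpha_\gamma$--invariance of the $\dot p^\gamma_j$, and commutativity of $B$) $\sum_k\dot a_k\,\dot p^\gamma_{k-m}=0$ for every $m$; applying $\liftphi$ and invoking $\liftphi(\dot p^\gamma_j)=\dot p^{\gamma'}_j$ turns this into the vanishing condition for $\sum_k u_{\gamma'}^{k}\liftphi(\dot a_k)$. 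Multiplicativity then follows from $(u_\gamma^{k}\dot a)(u_\gamma^{l}\dot b)=u_\gamma^{k+l}\big(\alpha_\gamma^{-l}(\dot a)\dot b\big)$ and the intertwining property of $\liftphi$; $*$--preservation from $(u_\gamma^{k}\dot a)^{*}=u_\gamma^{-k}\alpha_\gamma^{k}(\dot a^{*})$; and the same computation gives $\dot E'\circ\Phi_0=\liftphi\circ\dot E$ on $\calR_\gamma$. Note that this argument never needs a \emph{canonical} Fourier form, which matters because when $\gamma$ has infinitely many orbits of some fixed finite size the spanning set is genuinely redundant.

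Finally, for boundedness and bijectivity I would exploit the faithful conditional expectation: represent $\roeq(X_\gamma)$ by left multiplication on the Hilbert $B$--module $L^2(\roeq(X_\gamma),\dot E)$, the separated completion for $\langle x,y\rangle=\dot E(x^{*}y)$; faithfulness of $\dot E$ makes this representation isometric. Since $\dot E'\circ\Phi_0=\liftphi\circ\dot E$ and $\liftphi$ is isometric, $\Phi_0$ extends to a surjective $\liftphi$--twisted unitary $U$ between the two modules, and conjugation by $U$ carries the regular representation of $\calR_\gamma$ onto that of $\Phi_0[\calR_\gamma]$; therefore $\Phi_0$ is isometric and extends to an isometric $*$--homomorphism $\Psi\colon\roeq(X_\gamma)\to\roeq(X_{\gamma'})$. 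Running the construction with $\Phi^{-1}$ in place of $\Phi$ produces an isometric $*$--homomorphism in the opposite direction which is inverse to $\Psi$ on $\calR_\gamma$, hence (by density) globally, so $\Psi$ is the desired isomorphism. The main obstacle I expect is the definability of $\dot p_k^\gamma$ from $\alpha_\gamma$, together with checking that exactly this makes $\Phi_0$ well defined on the redundant spanning set; the rest is the standard ``faithful conditional expectation implies the regular representation is isometric'' argument and routine crossed--product bookkeeping. An alternative packaging is via Fell bundles or partial crossed products, but the route above seems most transparent.
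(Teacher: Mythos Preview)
Your argument is correct and arrives at the same map as the paper, but the verification is organised quite differently. The paper proves the Claim by brute force: it checks self-adjointness and multiplicativity of $\tilde\Phi$ by expanding everything in the basis $\delta_i$, and it proves the norm estimate by choosing, for each finite partition $\cA$ and each large $n$, an honest permutation of $\bbN\setminus n$ that realises $\Phi$ on the finite Boolean algebra generated by $\gamma^k[\cA]$, $|k|\leq m$; this permutation transports test vectors and gives $\|\dot T\|=\|\dot T_1\|$ directly via the formula $\|\dot T\|=\lim_n\sup\{\|T\xi\|:\|\xi\|_2=1,\ \min\supp(\xi)\geq n\}$. No Hilbert-module machinery is used, and the periodic-orbit projections never appear.

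Your route is more structural: you isolate the projections $\dot p_k^\gamma=[\mathrm{Fix}(\gamma^k)]_{\Fin}$, observe they are definable in $\fA_{\alpha_\gamma}$ (as the largest element fixed pointwise under $\alpha_\gamma^k$), and use this to prove well-definedness of $\Phi_0$ on the redundant spanning set; then you outsource the norm estimate to the faithful conditional expectation and the regular representation on the Hilbert $B$-module $L^2(\roeq(X_\gamma),\dot E)$. This buys conceptual clarity---well-definedness is made explicit, and the isometry is a one-line consequence of $\dot E'\circ\Phi_0=\liftphi\circ\dot E$---at the cost of importing the (standard) fact that $\dot E$ is faithful on the full corona, not just on $\calR_\gamma$. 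The paper's approach is more self-contained and elementary but somewhat opaque about why the formula is well defined in the presence of finite orbits; your approach makes that transparent, and also makes it clear that the construction is really a twisted crossed-product map.
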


\begin{proof} 
	Let $\Phi$ be an automorphism of $\pnmf$ such that $\Phi\circ \alpha_\gamma\circ \Phi^{-1}=\alpha_{\gamma'}$; again identify $\Phi$ with an automorphism of $\ell_\infty/c_0$.

Let $\roeq[X_\gamma]$ be the `algebraic uniform Roe corona' that is, the quotient of $\cstu[X_\gamma]$  modulo the compact operators. 	If $a\in \cB(\ell_2(\bbN))$  then we write $\dot a$ for the image of $a$ under the quotient map modulo $\cK(\ell_2(\bbN))$.
	By Lemma~\ref{L.Propagation}, every $\dot T\in \roeq[X_\gamma]$  has the form $\sum_{k\in \bbZ} v_\gamma^k \dot E (\dot T v_\gamma^{k})$.

\begin{claim}	
	The formula
		\begin{equation}\label{eq.tildePhi}
	\tilde\Phi\left(\sum_{k\in \bbZ} \dot v_\gamma^k \dot E (\dot T \dot v_\gamma^{k})\right)
	=
	\sum_{k\in \bbZ} \dot v_{\gamma'}^k \dot E (\dot T \dot v_{\gamma'}^{k})
	\end{equation}
		defines a norm preserving *-isomorphism 	between $ \roeq[X_\gamma]$ and $\roeq[X_{\gamma'}]$ that sends $\dot v_\gamma$ to $\dot v_{\gamma’}$. 
	\end{claim}

	\begin{proof} By Lemma~\ref{L.Propagation}, the sum in the definition of $\tilde \Phi$ is finite if $T\in \cstu[X_\gamma]$, and $\tilde \Phi$ is well-defined. Writing $\beta$ for $\gamma^k$, to prove that $\tilde \Phi$ is self-adjoint, note that with a change of variable one has 
		\[
		E(v_\beta^* T)=\sum_i (T\delta_i|v_\beta \delta_i)p_i=\sum_i (T \delta_{\beta^{-1}(i)}|\delta_i) p_{\beta^{-1} (i)}.
		\] 
		Since $\dot E$ is self-adjoint and $p_{\beta^{-1}(i)} v_\beta^*= \delta_{\beta^{-1}(i)}\odot \delta_i$ we have  
		\[
		(v_\beta E( T^* v_\beta))^*= E( v_\beta^* T) v_\beta^* = 
		\sum_i (T \delta_{\beta^{-1}(i)}|\delta_i) \delta_{\beta^{-1}(i)}\odot \delta_i
		\] 
With another change of variable in \eqref{eq.tildePhi} ($k\mapsto -k$) we obtain $\tilde\Phi(T^*)^*=\tilde \Phi(T)$. 

To prove multiplicativity, fix $S$ and $T$ in $\cstu[X_\gamma]$ and $k,l$ in $\bbZ$. Note that 
\[
v_\gamma^k E(S v_\gamma^k) v_\gamma^l E(T v_\gamma^l)
=\left(\sum_i (S\delta_{\gamma^k(i)}|\delta_i) \delta_{\gamma^k(i)}\odot \delta_i\right)
\left(\sum_j (T\delta_{\gamma^l(j)} |\delta_j) \delta_{\gamma^l(j)}\odot \delta_j\right).
		\]
		We have that $( \delta_{\gamma^k(i)}\odot \delta_i)(\delta_{\gamma^l(j)}\odot \delta_j)\neq 0$ if and only if $i=\gamma^l(j)$, and therefore the right-hand side of the last displayed formula is equal to 
		\[
		\sum_k \sum_l \sum_j (S\delta_{\gamma^{k+l}(j)}|\delta_{\gamma^l(j)} )
		(T\delta^l(j)|\delta_j)\delta_{\gamma^{k+l}(j)}\odot \delta_j. 
		\]
		Finally, $(ST\xi|\eta)=\sum_i (T\xi|\delta_i)(\delta_i|S\eta) $. By putting this together one obtains that $\tilde\Phi$ is multiplicative.

		In the next step we prove that $\|\tilde\Phi(\dot T)\|=\|\dot T\|$ for all $\dot T$ in a norm-dense subspace of $\roeq[X_\gamma]$. 
		For $A\subseteq \bbN$ let 
		\[
		p_A=\proj_{\overline{\Span\{\delta_i\mid i\in A\}}},
		\]
		the projection to the closure of $\Span\{\delta_i\mid i\in A\}$.\footnote{Thus $p_i$ as used earlier is really $p_{\{i\}}$; this will hopefully not lead to confusion.}
		If $m\in \bbN$, $\cA$ is a partition of $\bbN$ into finitely many pieces, and $\lambda_{k,A}$ are scalars for $|k|\leq m$ and $A\in \cA$, then the operator (writing $v_\gamma^{-k}=(v_\gamma^*)^k$) 
		\[
		T=\sum_{|k|\leq m} v_\gamma^k \sum_{A\in \cA} \lambda_{k,A} p_A
		\]
		belongs to $\cstu[X_\gamma]$. Operators of this form form a norm-dense *-subalgebra of $\cstu[X_\gamma]$. We will prove $\|\tilde\Phi(\dot T)\|=\|\dot T\|$ for such $T$. 
		First, 
		\[
		\tilde\Phi(\dot T)=\sum_{|k|\leq m} \dot v_\gamma^k \sum_{A\in \cA} \lambda_{k,A} p_{\Phi(\dot A)}.
		\] 
		For $T\in \cB(H)$, we have that 
		\begin{equation}
		\label{eq.dotT}
		\|\dot T\|=\lim_{n\to \infty}\sup\{ \|T\xi\|\mid \|\xi\|_2=1, \min\supp(\xi)\geq n\}. 
		\end{equation}
		Let $\Phi_*$ be a (set-theoretic) lifting of $\Phi$, so that $[\Phi_*(A)]_{\Fin}=\Phi([A]_{\Fin})$ for all $A$.  
		
		Then with 
		\[
		T_1=\sum_{|k|\leq m} v_\gamma^k \sum_{A\in \cA} \lambda_{k,A} p_{\Phi_*(A)}
		\]
		we have $\tilde \Phi(\dot T)=\dot T_1$. We need to prove that $\|\dot T\|=\|\dot T_1\|$. 
		
		Let $\cA'=\{\Phi_*(A)\mid A\in \cA\}$. 
		Let $\cB$ be the Boolean algebra generated by $\gamma^k [\cA]$, for $|k|\leq m$ and let $\cB'$ be the Boolean algebra generated by $(\gamma')^k [\cA']$, for $|k|\leq m$. 
		
		Since $\Phi$ is an automorphism of $\pnmf$, for every large enough $n$ every $B\in \cB$ satisfies $B\setminus n\neq \emptyset$ if and only if $B$ is infinite if and only if $\Phi_*(B)\setminus n\neq \emptyset$ . There is then an automorphism of $\cP(\bbN\setminus n)$ that sends $\gamma^k(A\setminus n)$ to $(\gamma')^k(\Phi_*(A)\setminus m)$ for all $A\in \cB$ and all $k\leq m$. 
		This automorphism is implemented by a permutation of $\bbN\setminus n$. This permutation sends every $\xi\in \ell_2$ with $\min(\supp(\xi))\geq n$ to a vector $\xi'$ with $\min(\supp(\xi'))\geq n$, $\|\xi\|_2=\|\xi'\|_2$, and $\|T(\xi)\|_2=\|T_1(\xi')\|_2$. 
			 Since this holds for arbitrarily large $n$, by \eqref{eq.dotT} we have that $\|\dot T\|=\|\dot T_1\|$ as required. 
		 
		 We have now proven that $\tilde \Phi$ is a *-homomorphism between dense *-subalgebras of $\roeq[X_\gamma]$ and $\roeq[X_{\gamma'}]$. It clearly  sends $v_\gamma$ to $v_{\gamma'}$. This completes the proof. 
		\end{proof}

By Claim, $\tilde\Phi$  extends to an isomorphism between $\roeq(X_\gamma)$ and $\roeq(X_{\gamma'})$.  Clearly $\tilde \Phi(\dot v_\gamma)=\dot v_{\gamma'}$. 
\end{proof}

The following is one of equivalent characterizations of the asymptotic dimension, as given in \cite[Theorem 9.9]{RoeBook}.  
A metric space $X$ has \emph{asymptotic dimension $\leq d$} if  for every $k$ it has an open covering $\cU$ such that the supremum of diameters of elements of $\cU$ is finite and each  $k$-ball intersects at most $d+1$ elements of $\cU$. 
\emph{Property~A} is an amenability notion for coarse spaces. It is a  consequence of having finite asymptotic dimension (see e.g., \cite[Definition 11.35]{RoeBook}, \cite[Remark 11.36 (ii)]{RoeBook}) and it has strong rigidity consequences.  For example, the first one in a line of rigidity results for uniform Roe algebras that culminated in \cite{baudier2022uniform} and \cite{martinez2025Cstar} was obtained  for spaces with property A in \cite{spakula2013rigidity}.  

\begin{theorem}\label{T.uRQ+}
There are $2^{\aleph_0}$ coarsely inequivalent uniformly locally finite spaces of asymptotic dimension $1$ (and therefore with Property A)  such that \ch{} implies all of their uniform Roe coronas are isomorphic, while $\OCAT$ and $\MA$ imply that all of their uniform Roe coronas are nonisomorphic.\footnote{We will not define $\OCAT$ and $\MA$ here, since their use is reduced to quoting a result from~\cite{braga2018uniformRoe} and \cite{baudier2022uniform}. Suffice it to say that if $\ZFC$ has a model then so does $\ZFC+\OCAT+\MA$. Our contribution is in constructing the spaces and proving that \ch{} implies their uniform Roe coronas are isomorphic.}
\end{theorem}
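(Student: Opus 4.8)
The plan is to exhibit Theorem~\ref{T.uRQ+} as a geometric repackaging of Corollary~\ref{C.many.nonconjugate}. First, re-run the construction in the proof of Corollary~\ref{C.many.nonconjugate} but with the auxiliary increasing sequence $\bar n$ chosen so that $n_{j+1}/n_j\to\infty$ (for instance $n_j=2^{2^j}$). Since the Ghasemi subsequence $\bar m$ is a subsequence of $\bar n$, and each $\bar m_f$ (for $f\in\cF$) is a subsequence of $\bar m$, every $\bar m_f$ inherits the property that the ratio of consecutive terms tends to $\infty$. For $f\in\cF$ let $X_f$ be the uniformly locally finite metric space $X_{\bar m_f}$ associated with the rotary permutation $r_{\bar m_f}$, with its finite cyclic orbits glued at diverging distances as in \eqref{eq.d-gamma}; by the lemma preceding Lemma~\ref{L.coarse.rotary} these spaces are uniformly locally finite, and $|\{X_f:f\in\cF\}|=2^{\aleph_0}$.

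Next I would check that each $X_f$ has asymptotic dimension $1$, hence Property~A. The space $X_f$ is a coarse disjoint union of finite cycles of strictly increasing size. Given $r$, take the finite (hence bounded) set $\bigcup\{J_k:k\le r\}$ as a single piece of a cover, and cut each remaining cycle $J_n$ ($n>r$) into consecutive arcs of length between $2r+1$ and $4r+2$; no $r$-ball meets three of these arcs because each arc has length $>2r$, and no $r$-ball reaches from one piece of the cover to another because the mutual distances of the $J_k$ diverge. This is a uniformly bounded open cover of $r$-multiplicity $\le 2$, so $\mathrm{asdim}(X_f)\le 1$; the reverse inequality holds because a sufficiently large cycle is connected and so cannot be split into $>\!r$-separated bounded sets, which rules out $\mathrm{asdim}=0$.

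Then I would prove the $X_f$ are pairwise coarsely inequivalent, already in $\ZFC$. If $X_f$ and $X_{f'}$ with $f\neq f'$ were coarsely equivalent, Lemma~\ref{L.coarse.rotary} would give $K<\infty$ and an almost permutation $\gamma$ of $\bbN$ with $\tfrac1K\,m_{f'(\gamma(i))}\le m_{f(i)}\le K\,m_{f'(\gamma(i))}$ for all but finitely many $i$. Since $\bar m$ is strictly increasing with $m_{k+1}/m_k\to\infty$, for all sufficiently large indices no two distinct terms of $\bar m$ lie within a factor $K$ of one another; as $f(i)\to\infty$ and $f'(\gamma(i))\to\infty$, this forces $m_{f(i)}=m_{f'(\gamma(i))}$, hence $f(i)=f'(\gamma(i))$, for all but finitely many $i$. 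But then $f(i)\in\ran(f)\cap\ran(f')$ for infinitely many $i$, contradicting the fact that members of $\cF$ have ranges with pairwise finite overlap. Granting this, the independence statement follows. Under \ch, Corollary~\ref{C.many.nonconjugate}\eqref{1.many.conjugate} applied with the invariant set $[\bbN]_{\Fin}$ shows that $\alpha_{\bar m_f}$ and $\alpha_{\bar m_{f'}}$ are conjugate by an automorphism of $\pnmf$, so $\roeq(X_f)\cong\roeq(X_{f'})$ by Lemma~\ref{L.tildePhi}; hence \ch{} makes all the coronas isomorphic. Under $\OCAT+\MA$ the rigidity theorem of \cite{braga2018uniformRoe}, as improved in \cite[Theorem~1.5]{baudier2022uniform}, says that an isomorphism of the uniform Roe coronas of uniformly locally finite spaces forces those spaces to be coarsely equivalent, which by the previous sentence is impossible for $f\neq f'$; hence the coronas are pairwise nonisomorphic. (Under $\OCAT$ this last direction is really Corollary~\ref{C.many.nonconjugate}\eqref{2.many.conjugate} in disguise: fast growth makes coarse equivalence of $X_f$ and $X_{f'}$ equivalent, via Lemmas~\ref{L.coarse.rotary} and~\ref{lem:TriviallyConjugate}, to conjugacy of $\alpha_{\bar m_f}$ and $\alpha_{\bar m_{f'}}$.)

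The main obstacle is coordination rather than any single deep step: one must produce one family of rotary automorphisms that simultaneously (i) has all associated dynamical systems $\fA_{\bar m_f}$ elementarily equivalent (Ghasemi's trick, for the \ch{} half), (ii) has fast-growing cycle-size sequences with pairwise finitely overlapping index sets (for the $\ZFC$ coarse inequivalence, hence the forcing-axiom half), and (iii) realizes geometrically as spaces of asymptotic dimension $1$. Each is easy in isolation, so the work is verifying they are mutually compatible; the one genuinely new computation is the asymptotic-dimension bound, where care is needed to keep the cover uniformly bounded while treating the finitely many small cycles near the ``origin'' of \eqref{eq.d-gamma} as a single piece.
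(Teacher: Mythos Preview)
Your proposal is correct and follows essentially the same route as the paper: build the family via Corollary~\ref{C.many.nonconjugate}, arrange fast growth of the cycle sizes (you impose $n_{j+1}/n_j\to\infty$ on the seed sequence, the paper passes to subsequences to get $n_{i+1}>i\cdot n_i$), use Lemma~\ref{L.coarse.rotary} for coarse inequivalence, Lemma~\ref{L.tildePhi} for the \ch{} direction, and \cite[Theorem~1.5]{baudier2022uniform} for the forcing-axiom direction, with a direct interval-partition argument for asymptotic dimension $\le 1$. Your treatment of the small cycles and the explicit ratio argument for coarse inequivalence are a bit more detailed than the paper's, but the architecture is the same.
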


\begin{proof} By Corollary~\ref{C.many.nonconjugate}  there are increasing sequences $\bar n(r)$, for $r\in \cP(\bbN)$, such that for all further subsequences $\bar n’(r)$ of $\bar n(r)$,  \ch{} implies that the dynamical systems $\fA_{\bar n’(r)}$, for $r\in \cP(\bbN)$, are isomorphic but they are not isomorphic if all automorphisms of $\pnmf$ are trivial. 
By passing to subsequences we may assume $n_{i+1}(r)>i\cdot  n_i(r)$ for all $i$ and~$r$. 
 We claim that the coarse spaces $X(r)=X(\bar n(r))$, for $r\in \cP(\bbN)$,  are as required.

Assume \ch. By the choice of $\bar n(r)$, all $\fA_{\bar n(r)}$ are isomorphic.     Therefore Lemma~\ref{L.tildePhi} implies that all $\roeq(X(r))$ are isomorphic. 

	On the other hand,  \cite[Theorem~1.5]{baudier2022uniform} implies that  under $\OCAT$ and $\MA$ we have that $\roeq(X)\cong \roeq(Y)$ implies that $X$ and $Y$ are coarsely equivalent.  Lemma~\ref{L.coarse.rotary} implies that the spaces $X(r)$ are pairwise coarsely inequivalent. 

It remains to verify that these spaces have the required coarse properties. 
The spaces $X(r)$ are clearly locally uniformly finite.
Fix $r$ and let $I_j$, for $j\in \bbN$,  be intervals of $\bbN$ of length $n_j$ that are coarse components of $X(r)$. In order to see that the asymptotic dimension of $X(r)$ is 1, fix $k\geq 1$.  Partition each $I_j$ into intervals of length between $k$ and $2k$. These intervals form an open cover of $X(r)$ of sets whose diameters are $\leq 2k$, and each $k$-ball intersects at most two of them.  By \cite[Defiition~9.4]{RoeBook} each  $X(r)$ has asymptotic dimension $\leq 1$ and  by \cite[Remark 11.36 (ii)]{RoeBook} each $X(r)$ has property A.  The argument that the asymptotic dimension of~$X(r)$ is nonzero is easy and therefore omitted.  
	\end{proof}

The main result of \cite{brian2024does}, together with Lemma~\ref{L.tildePhi},  shows that $\roeq(\bbN)$ (where $\bbN$ is taken with the standard metric) has an automorphism that sends the left unilateral shift to the right unilateral shift (i.e., it reverses the $K_1$-group of the algebra). It is a major open problem, dating back to \cite{BrDoFi}, whether the Calkin algebra has such an automorphism. Regrettably, the only automorphisms of $\pnmf$, identified with the lattice of projections of $\ell_\infty/c_0$, that extend to the Calkin algebra are the trivial ones (see \cite[Notes to \S 17.9]{Fa:STCstar} for more details). 

\section{Concluding remarks}

A positive answer to the following would imply that the requirement on parities in  Theorem~\ref{thm:ExtensionToTriv} is redundant. 

\begin{question}\label{Q.parity}
	Are there trivial automorphisms $\alpha_f$ and $\alpha_g$ such that $\fA_f \equiv \fA_g$ but $\parity(\alpha_f) \neq \parity(\alpha_g)$? 
\end{question}

Note that if there are such automorphisms then the maps $f = s_{\Z \times \Z}$ and $g = s_{\Z \times \Z} \oplus s$ provide an  example. To prove this, assume that $\fA_f\equiv\fA_g$ but $\parity(\alpha_f)$ is even while $\parity(\alpha_g)$ is odd. Since  $Z(\alpha)$ is definable as the supremum of atoms of $\FIX_\alpha$ (although the fact that this supremum exists is an accident), the  structures $(\cP(Z(\alpha))/\Fin,\alpha\rs Z(\alpha))$  and $(\cP(Z(\beta))/\Fin,\alpha\rs Z(\beta))$ are elementarily equivalent but with different parities. Being elementarily equivalent, they have the same number of atoms. Since the parities are different, this number is infinite and each of $\fA_\alpha$ and $\fA_\beta$ is a direct sum of $s_{\Z \times \Z}$ and a finite number of copies of $\fA_\sigma$ and $\fA_{\sigma^{-1}}$. Finally, since $\fA_\sigma$ and $\fA_{\sigma^{-1}}$ are elementarily equivalent, and since by the Feferman--Vaught theorem (\cite{feferman1959first}, also \cite[Theorem~6.3.4]{ChaKe}) the first-order theory is unchanged if a direct summand is replaced by one that is elementarily equivalent,~$\fA_\alpha$ is elementarily equivalent to $s_{\Z \times \Z}$ while $\fA_\beta$ is elementarily equivalent to $s_{\Z \times \Z}\oplus \s$.

We conjecture that the answer to Question~\ref{Q.parity} is yes.

In  \S\ref{sec:RotaryTheory}  we presented some partial results towards describing first-order properties of~$\fA_\alpha$. Since in \S \ref{S.Saturation}  we have shown that under \ch potential isomorphism between dynamical systems associated with trivial automorphisms of $\cPNF$ reduces to elementary equivalence,  it would be desirable to find a complete characterization of such properties. The paragraph preceding Problem~\ref{Prob.Characterization}  below suggests that such complete characterization exists. 

Let us say that there is an \emph{arithmetic obstruction} to the conjugacy of two rotary maps $\alpha_{\bar m}$ and $\alpha_{\bar n}$ if there exist some $j < k$ in $\w$ such that one of the formulas described in Theorem~\ref{thm:RotaryObstruction} has a different truth value for $\fA_{\bar m}$ and $\fA_{\bar n}$. 

\begin{question} Is the theory of a rotary map completely determined by the arithmetic obstructions that follow from it? 
\end{question}

A positive answer would imply that the absence of arithmetic obstructions to the conjugacy of two rotary maps $\alpha_{\bar m}$ and $\alpha_{\bar n}$ implies $\fA_{\bar m}$ and $\fA_{\bar n}$ are elementarily equivalent, and therefore conjugate under \ch. 
Observe that there are many pairs of sequences $\bar m$ and $\bar n$ such that there is no arithmetic obstruction to their conjugacy, but they are not trivially conjugate (This is of course the idea behind the proof of  the second part of Corollary~\ref{C.many.nonconjugate}.) In such cases we do not know whether \ch implies $\fA_{\bar m} \cong \fA_{\bar n}$. Two concrete examples are recorded in the following question, phrased without mentioning \ch. 

\begin{question} \label{Q.examples}
	\begin{enumerate}
		\item Are $\fA_{(2i)!}$ and $\fA_{(2i+1)!}$ elementarily equivalent? 
		
		\item Suppose that $p_i$ and $q_i$ are increasing sequences of primes such that for every $d\geq 2$ some $k(d)$ satisfies $p_i=q_i (\mathrm{mod}\  d)$ for all $i\geq k(d)$. Does it follow that $\fA_{(p_i)}$ and $\fA_{(q_i)}$ are elementarily equivalent? 
	\end{enumerate}
\end{question}

Note that a more na\" ive version of the second part of the question has negative answer, because Dirichlet’s theorem implies that all $d\geq 2$ and $0\leq l<d$ there are infinitely many primes $p$ such that $p= l (\mathrm{mod}\ d)$.

We conclude by outlining a more ambitious variant of Question~\ref{Q.examples}. 
By the Feferman--Vaught theorem (\cite{feferman1959first},  \cite[Proposition~6.3.2]{ChaKe}), the theory of a reduced product $\prod_n A_n/\Fin$ is computable from the sequence of theories of~$A_n$ (or even, if these theories converge, from their limit; this is a key point in Ghasemi’s Trick,  Proposition~\ref{P.FVtrick}).  A more precise consequence of the Feferman--Vaught theorem (and the fact that the theory of atomless Boolean algebras admits elimination of quantifiers) is that $\prod_i M_i/\Fin$ and $\prod_i N_i/\Fin$ are elementarily equivalent if and only if the set of limit points of the theories of the $M_i$'s  is equal to the set of limit points of the theories of the $N_i$'s (equivalently, for every $\calL$-sentence $\varphi$, the set $\{i\mid M_i\models \varphi\}$ is infinite if and only if the set $\{i\mid N_i\models \varphi\}$ is infinite).  Since~$\fA_{\bar n}$ is the reduced product of finite structures (Lemma~\ref{L.ReducedProduct}), this observation shows that the theory of a rotary automorphism is decidable and reduces the problem of computing it  to a problem in finite model theory (see e.g.,~\cite{gradel2007finite}).   The \emph{monadic second-order logic} is the extension of the first-order logic in which quantification over subsets of the domain is allowed.  
In particular,  quantifying over the elements of $\cP(n_i)$ is interchangeable with quantifying over the subsets of $n_i$. As in Lemma~\ref{L.ReducedProduct}, $r_{n_j}$ denotes the automorphism of $\cP(n_j)$ obtained by cycling its atoms. 

\begin{problem}  \label{Prob.Characterization} Describe the sequences $(n_j)$ such that the theories of $ \< \P(n_j),r_{n_j} \>$ converge. Equivalently, describe the sequences $(n_i)$ such that the monadic second-order theory of the (directed) $n_i$-cycles converge. 
\end{problem}

\bibliographystyle{amsplain}
\bibliography{10-conjugacy}

\providecommand{\bysame}{\leavevmode\hbox to3em{\hrulefill}\thinspace}
\providecommand{\MR}{\relax\ifhmode\unskip\space\fi MR }
\providecommand{\MRhref}[2]{%
  \href{http://www.ams.org/mathscinet-getitem?mr=#1}{#2}
}
\providecommand{\href}[2]{#2}
\begin{thebibliography}{10}

\bibitem{Anderson}
R.~D. Anderson, \emph{The algebraic simplicity of certain groups of
  homeomorphisms}, American Journal of Mathematics \textbf{80} (1958),
  955--963.

\bibitem{baudier2022uniform}
F.P. Baudier, B.M. Braga, I.~Farah, A.~Khukhro, A.~Vignati, and R.~Willett,
  \emph{Uniform {R}oe algebras of uniformly locally finite metric spaces are
  rigid}, Inv. Math. \textbf{230} (2022), no.~3, 1071--1100.

\bibitem{Darji}
N.~C. Bernardes~Jr. and U.~B. Darji, \emph{Graph theoretic structure of maps of
  the cantor space}, Adv. Math. \textbf{231} (2012), 1655--1680.

\bibitem{braga2018uniformRoe}
B.~M. Braga, I.~Farah, and A.~Vignati, \emph{Uniform {R}oe coronas}, Adv. Math.
  \textbf{389} (2021), Paper No. 107886, 35.

\bibitem{brian2024does}
W.~Brian, \emph{Can ${\cP}(\omega)/{\Fin}$ tell its right hand from its left?},
  arXiv, 2024.

\bibitem{brianPsets}
W.~R. Brian, \emph{{$P$}-sets and minimal right ideals in {$\mathbb N^*$}},
  Fund. Math. \textbf{229} (2015), 277--293.

\bibitem{BrDoFi}
L.G. Brown, R.G. Douglas, and P.A. Fillmore, \emph{Extensions of
  \cstar-algebras and {K}-homology}, Annals of Math. \textbf{105} (1977),
  265--324.

\bibitem{ChaKe}
C.~C. Chang and H.~J. Keisler, \emph{Model theory}, third ed., Studies in Logic
  and the Foundations of Mathematics, vol.~73, North-Holland Publishing Co.,
  Amsterdam, 1990.

\bibitem{cornulier2019near}
Y.~de Cornulier, \emph{Near actions}, preprint available at
  \texttt{https://arxiv.org/abs/1901.05065} (2019).

\bibitem{de2023trivial}
B.~de~Bondt, I.~Farah, and A.~Vignati, \emph{Trivial automorphisms of reduced
  products}, Israel J. Math. (to appear).

\bibitem{Fa:STCstar}
I.~Farah, \emph{Combinatorial set theory and \cstar-algebras}, Springer
  Monographs in Mathematics, Springer, 2019.

\bibitem{farah2022corona}
I.~Farah, S.~Ghasemi, A.~Vaccaro, and A.~Vignati, \emph{Corona rigidity}, Bull.
  Symbl. Logic. (to appear).

\bibitem{feferman1959first}
S.~Feferman and R.~Vaught, \emph{The first order properties of products of
  algebraic systems}, Fund. Math. \textbf{47} (1959), no.~1, 57--103.

\bibitem{Fuchino}
S.~Fuchino, \emph{Doctoral dissertation}, Freie Universit\"{a}t Berlin (1985).

\bibitem{Gha:FDD}
S.~Ghasemi, \emph{Isomorphisms of quotients of {FDD}-algebras}, Israel J. Math.
  \textbf{209} (2015), no.~2, 825--854.

\bibitem{ghasemi2016reduced}
\bysame, \emph{Reduced products of metric structures: a metric
  {F}eferman--{V}aught theorem}, J. Symb. Log. \textbf{81} (2016), no.~3,
  856--875.

\bibitem{GoodMeddaugh}
C.~Good and J.~Meddaugh, \emph{Shifts of finite type as fundamental objects in
  the theory of shadowing}, Invent. Math. \textbf{220} (2020), 715--736.

\bibitem{gradel2007finite}
E.~Gr{\"a}del, P.G. Kolaitis, L.~Libkin, M.~Marx, J.~Spencer, M.Y. Vardi,
  Y.~Venema, and S.~Weinstein, \emph{Finite model theory and its applications},
  Texts in Theoretical Computer Science. An EATCS Series, Springer, 2007.

\bibitem{JonOl:Almost}
B.~Jonsson and P.~Olin, \emph{Almost direct products and saturation},
  Compositio Math. \textbf{20} (1968), 125--132.

\bibitem{Ku:Book}
K.~Kunen, \emph{Set theory: An introduction to independence proofs},
  North--Holland, 1980.

\bibitem{martinez2024rigidity}
D.~Mart\'inez and F.~Vigolo, \emph{A rigidity framework for {R}oe-like
  algebras}, arXiv preprint arXiv:2403.13624 (2024).

\bibitem{martinez2025Cstar}
\bysame, \emph{\cstar-rigidity of bounded geometry metric spaces}, Publ. Math.
  Inst. Hautes \'Etudes Sci. \textbf{141} (2025), 333--348.

\bibitem{mckenney2018forcing}
P.~McKenney and A.~Vignati, \emph{Forcing axioms and coronas of
  {$\mathrm{C}^*$}-algebras}, J. Math. Log. \textbf{21} (2021), no.~2, Paper
  No. 2150006, 73. \MR{4290495}

\bibitem{vanMill}
J.~van Mill, \emph{Handbook of set-theoretic topology}, ch.~An introduction to
  $\beta \omega$, pp.~503--560, North-Holland, 1984, eds. K. Kunen and J. E.
  Vaughan.

\bibitem{palmgren}
E.~Palmgren, \emph{A direct proof that certain reduced products are countably
  saturated}, Tech. report, {Uppsala University. Department of Mathematics}
  Report, 1994.

\bibitem{Pede:Analysis}
G.K. Pedersen, \emph{Analysis now}, Graduate Texts in Mathematics, vol. 118,
  Springer-Verlag, New York, 1989.

\bibitem{RoeBook}
J.~Roe, \emph{Lectures on coarse geometry}, University Lecture Series, vol.~31,
  American Mathematical Society, Providence, RI, 2003.

\bibitem{RubinStepanek}
M.~Rubin and P.~\v{S}t\v{e}p\'anek, \emph{Handbook of boolean algebras},
  ch.~Homogeneous Boolean algebras, pp.~679--715, North-Holland, Amsterdam,
  1989, ed. J. D. Monk with R. Bonnet.

\bibitem{ShelahAutomorphisms}
S.~Shelah, \emph{Proper forcing}, Lecture Notes in Mathematics, vol. 940,
  Springer-Verlag, Berlin, 1982.

\bibitem{ShelahSteprans}
S.~Shelah and J.~Stepr\={a}ns, \emph{$\mathsf{PFA}$ implies all automorphisms
  are trivial}, Proc. Amer. Math. Soc. \textbf{104} (1988), 1220--1225.

\bibitem{spakula2013rigidity}
J.~{\v{S}}pakula and R.~Willett, \emph{On rigidity of {R}oe algebras}, Adv.
  Math. \textbf{249} (2013), 289--310.

\bibitem{van1990automorphism}
E.K. van Douwen, \emph{The automorphism group of {$P (\omega)/\Fin$} need not
  be simple}, Top. Appl. \textbf{34} (1990), no.~1, 97--103.

\bibitem{Ve:OCA}
B.~Veli{\v{c}}kovi{\'c}, \emph{{OCA} and automorphisms of {${\mathcal
  P}(\omega) /\Fin$}}, Top. Appl. \textbf{49} (1992), 1--13.

\bibitem{willett2020higher}
R.~Willett and G.~Yu, \emph{Higher index theory}, vol. 189, Cambridge
  University Press, 2020.

\end{thebibliography}
\end{document}